\makeatletter\@addtoreset{equation}{section} \makeatother
\newtheorem{thm}{Theorem}[section]
\newtheorem{Lemma}{Lemma}[section]
\newtheorem{rem}{Remark}[section]
\makeatletter \setlength{\parindent}{2em}
\title{Global stability of traveling waves for $(1+1)$-dimensional systems of quasilinear wave equations}
\author{Louis Dongbing Cha (Dongbing Zha)\thanks{Department of Mathematics and Institute for Nonlinear Sciences, Donghua University, Shanghai 201620, PR China. {E-mail address: ZhaDongbing@163.com}, LouisDongbingCha@gmail.com}~~~~
Arick Shao\thanks{School of Mathematical Sciences, Queen Mary University of London, Mile End Road, London E1 4NS, United Kingdom. {E-mail address: a.shao@qmul.ac.uk}} }
\begin{document}

\maketitle
\begin{abstract}
A key feature of $(1+1)$-dimensional nonlinear wave equations is that they admit left or right traveling waves, under appropriate algebraic conditions on the nonlinearities.
In this paper, we prove global stability of such traveling wave solutions for $(1+1)$-dimensional systems of nonlinear wave equations, given a certain asymptotic null condition and sufficient decay for the traveling wave.
We first consider semilinear systems as a simpler model problem; we then proceed to treat more general quasilinear systems.\\
\emph{Keywords}:~nonlinear wave equations; traveling wave; global nonlinear stability.\\
\emph{2010 MSC}:~35L05; 35L15; 35L72
\end{abstract}

\pagestyle{plain} \pagenumbering{arabic}

\section{Introduction}

Let $( t, x ) \in \mathbb{R} \times \mathbb{R}$ denote the usual Cartesian coordinates, and define also
\begin{equation}
\label{eq.uv} \xi=\frac{t+x}{2} \text{,} \qquad \eta=\frac{t-x}{2} \text{,}
\end{equation}
as well as the corresponding vector fields
\begin{equation}
\label{eq.uv_vf} \partial_{\xi} = \partial_{t}+ \partial_{x} \text{,} \qquad \partial_{\eta}= \partial_{t}-\partial_{x} \text{.}
\end{equation}
Consider the following $(1+1)$-dimensional system of quasilinear wave equations,
\footnote{Note that the left-hand side $v_{ \xi \eta } = v_{ t t } - v_{ x x }$ of \eqref{quasiwave} represents the standard wave operator, while the right-hand side of \eqref{quasiwave} represents nonlinear perturbations.}
\begin{align}\label{quasiwave}
v_{\xi\eta}&=A_1(v_{\xi},v_{\eta})v_{\xi\eta} + A_2(v_{\xi},v_{\eta})v_{\eta\eta}+A_3(v_{\xi},v_{\eta})v_{\xi\xi}+F(v_{\xi},v_{\eta}) \text{,}
\end{align}
where $v = v(t,x): \mathbb{R}^{1+1} \longrightarrow \mathbb{R}^{n}$ is the unknown; where $A_i: \mathbb{R}^{n} \times \mathbb{R}^{n} \rightarrow \mathbb{R}^{n\times n}$ ($i = 1, 2, 3$) are given smooth and matrix-valued functions; and where $F: \mathbb{R}^{n} \times \mathbb{R}^{n} \rightarrow \mathbb{R}^{n}$ is a given smooth and vector-valued function.
We also assume that $A_1, A_2, A_3$ are symmetric.

In addition, we assume the following for the coefficients $A_i$ and $F$:
\begin{align}
\label{order1} A_1(\rho, \theta) &= \mathscr{O}(|\rho|+|\theta|) \text{,} \\
\label{order22} A_2(\rho, \theta) &= \mathscr{O}(|\rho|) \text{,} \\
\label{order33} A_3(\rho, \theta) &= \mathscr{O}(|\theta|) \text{,} \\
\label{order4} F(\rho,\theta) &= \mathscr{O}(|\rho||\theta|) \text{.}
\end{align}
The assumptions \eqref{order1}--\eqref{order4} correspond to the null condition in the small data setting, within which global existence of classical solutions for \eqref{quasiwave} is proved in \cite{MR4098041}. We should point out that \cite{MR4098041} is inspired by the corresponding result in the semilinear case \cite{MR3783412}, which strengthens a former result in \cite{MR3121700}. Presently, \eqref{order1}--\eqref{order4} remain closely related to null conditions; we discuss this point further later.


Under \eqref{order1}--\eqref{order4}, one can verify that the left and right traveling waves $v=f(\xi)$ and $v=g(\eta)$ (with $f, g$ taking values in $\mathbb{R}^n$) are solutions to \eqref{quasiwave}.
Thus, a natural problem is to ask whether they are globally stable under some suitable assumptions.

\subsection{Semilinear Systems}

To simplify the discussion, let us first consider a semilinear system (i.e., with $A_i = 0$).
Consider the following $(1+1)$-dimensional system of semilinear wave equations,
\begin{align}\label{quasiwave888}
v_{\xi\eta}&=F(v_{\xi},v_{\eta}) \text{,}
\end{align}
where the condition \eqref{order4} is satisfied.
We can, without loss of generality, restrict our attention to stability for the left traveling wave $f ( \xi )$.
Let us write
\begin{equation}
\label{eq.u} v=u+f(\xi) \text{.}
\end{equation}
Then, $u$ satisfies the modified system
\begin{align}\label{vsystem999}
u_{\xi\eta}&=F(f'(\xi)+u_{\xi},u_{\eta}) \text{.}
\end{align}

In addition, we fix $0 < \delta < 1$, and we assume
\begin{align}\label{fgty7888}
M_0 = \sup_{ x \in \mathbb{R} } \left[ \langle x\rangle^{\frac{3}{2}(1+\delta)}(|f'(x)|+|f''(x)|) \right] < +\infty \text{,}
\end{align}
where $\langle \cdot\rangle=(1+|\cdot|^2)^{1/2}$.
Consider the Cauchy problem of \eqref{vsystem999}, with smooth initial data
\begin{align}\label{ddffk789yh}
u |_{ t=0 } = u_0 \text{,} \qquad u_t |_{ t = 0 } = u_1 \text{.}
\end{align}
As our preliminary result, we establish the following:

\begin{thm}\label{thn88786hhh9}
Consider the Cauchy problem \eqref{vsystem999}, \eqref{ddffk789yh}.
Assume the condition \eqref{order4} is satisfied, and assume the decay property \eqref{fgty7888} holds for some $0 < \delta < 1$.
Then there exists a constant $\varepsilon_0 > 0$, depending on $\delta$ and $M_0$, such that for any $0<\varepsilon\leq \varepsilon_0$, if
\begin{align}
\label{small_semilinear} \sum_{l=0}^{1} \left(\|\langle x\rangle^{1+\delta}\partial_x^{l}\partial_xu_0\|_{L_{x}^{2}(\mathbb{R})}+\|\langle x\rangle^{1+\delta}\partial_x^{l}u_1\|_{L_{x}^{2}(\mathbb{R})} \right)\leq \varepsilon \text{,}
\end{align}
then the problem \eqref{vsystem999}, \eqref{ddffk789yh} admits a unique global classical solution $u$.
In other words, the traveling wave solution $f(\xi)$ to \eqref{quasiwave888} is globally nonlinearly stable.
 \end{thm}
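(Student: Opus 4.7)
My plan is to run a bootstrap argument on weighted $L^2$ norms, exploiting the $(1+1)$-dimensional characteristic structure of \eqref{vsystem999} to integrate directly along characteristics. Combined with standard local well-posedness and a continuation criterion, it suffices to establish, on the maximal existence interval $[0,T^\ast)$, a uniform a priori bound on
\begin{equation*}
X(t) := \sum_{l=0}^{1}\left(\|\langle\xi\rangle^{1+\delta}\partial_x^{l}u_\xi(t,\cdot)\|_{L^2_x} + \|\langle\eta\rangle^{1+\delta}\partial_x^{l}u_\eta(t,\cdot)\|_{L^2_x}\right).
\end{equation*}
The smallness \eqref{small_semilinear} gives $X(0)\lesssim\varepsilon$, and I would assume $X(t)\leq K\varepsilon$ on $[0,T^\ast)$ for a constant $K=K(M_0)$ to be fixed later. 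The one-dimensional Sobolev embedding $H^1_x\hookrightarrow L^\infty_x$ applied to $\langle\xi\rangle^{1+\delta}u_\xi$ and $\langle\eta\rangle^{1+\delta}u_\eta$ then converts the bootstrap into the pointwise decay $|u_\xi(\xi,\eta)|\lesssim K\varepsilon\langle\xi\rangle^{-(1+\delta)}$ and $|u_\eta(\xi,\eta)|\lesssim K\varepsilon\langle\eta\rangle^{-(1+\delta)}$.

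The key observation is that, writing $w:=u_\xi$ and $z:=u_\eta$, the equation recasts as the two scalar transport equations $w_\eta=F(f'(\xi)+w,z)$ and $z_\xi=F(f'(\xi)+w,z)$, so $w$ is transported along the left-moving characteristics $\{\xi=\mathrm{const}\}$ and $z$ along the right-moving ones $\{\eta=\mathrm{const}\}$. For $z$, I would multiply its transport equation by $z$ and integrate in $\xi$ from the initial point $\xi=-\eta$, obtaining an integral inequality to which Gr\"onwall applies; the exponent
\begin{equation*}
\int_{-\eta}^{\xi}\bigl(|f'(\xi')|+|w(\xi',\eta)|\bigr)\,d\xi'
\end{equation*}
is bounded by $CM_0+CK\varepsilon$ since $f'\in L^1_\xi$ by \eqref{fgty7888} and $|w|$ has integrable $\langle\xi\rangle^{-(1+\delta)}$ decay from the bootstrap. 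Together with the pointwise decay $|u_\eta(0,\cdot)|\lesssim\varepsilon\langle\,\cdot\,\rangle^{-(1+\delta)}$ inherited from the initial data, this yields the \emph{$K$-independent} pointwise bound $|z(\xi,\eta)|\leq C(M_0)\varepsilon\langle\eta\rangle^{-(1+\delta)}$, and hence, after a change of variables to the initial slice, $\|\langle\eta\rangle^{1+\delta}z(t,\cdot)\|_{L^2_x}\leq C(M_0)\varepsilon$.

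For $w$, I would integrate $w_\eta=F$ along $\xi=\mathrm{const}$ from the initial slice; since $|f'(\xi)|$ is constant along the characteristic it factors out of the $\eta$-integral, and using the improved pointwise bound on $z$ one gets
\begin{equation*}
|w(\xi,\eta)| \;\leq\; |u_\xi(0,2\xi)| \;+\; C(M_0)\,\varepsilon\bigl(|f'(\xi)| + \sup_{\eta'}|w(\xi,\eta')|\bigr),
\end{equation*}
so that (absorbing the last term once $C(M_0)\varepsilon\leq 1/2$) $|w(\xi,\eta)|\lesssim |u_\xi(0,2\xi)| + C(M_0)\varepsilon|f'(\xi)|$. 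Squaring, multiplying by $\langle\xi\rangle^{2(1+\delta)}$ and integrating over a $t$-slice controls the $L^2$ norm, using
\begin{equation*}
\int_{\mathbb R}\langle\xi\rangle^{2(1+\delta)}|f'(\xi)|^2\,d\xi \;\leq\; M_0^2\int_{\mathbb R}\langle\xi\rangle^{-(1+\delta)}\,d\xi \;<\;\infty,
\end{equation*}
which is finite exactly because of the $\tfrac{3}{2}(1+\delta)$ decay in \eqref{fgty7888}. Applying the same scheme after differentiating \eqref{vsystem999} once in $x$ controls $\partial_x w$ and $\partial_x z$, the newly appearing source $f''(\xi)z$ being handled exactly like $f'(\xi)z$ thanks to \eqref{fgty7888}. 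These estimates combine to give $X(t)\leq C(M_0)\varepsilon$; fixing $K\geq 2C(M_0)$ and $\varepsilon_0$ small enough improves the bootstrap to $X(t)\leq K\varepsilon/2$, and the continuation criterion then yields $T^\ast=+\infty$.

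The main obstacle is precisely the term $f'(\xi)z$ inside $F$, which is linear (not quadratic) in the perturbation $u$ and therefore cannot be absorbed using smallness of $\varepsilon$ alone. Closing the bootstrap requires two distinct uses of \eqref{fgty7888}: (i) $f'\in L^1_\xi$, so that the Gr\"onwall bound for $z$ along $\eta=\mathrm{const}$ depends only on $M_0$ and produces a $K$-independent pointwise decay for $z$; and (ii) $\langle\xi\rangle^{1+\delta}f'\in L^2_\xi$, so that the transported $w$-profile inherits the desired weighted $L^2$ bound with a constant depending only on $M_0$. Both properties are secured precisely by the decay rate $\tfrac{3}{2}(1+\delta)$ in the hypothesis, which is why that specific rate appears in \eqref{fgty7888}.
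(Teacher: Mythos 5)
Your overall strategy---a characteristics/$L^\infty$-framework argument, transporting $w=u_\xi$ along $\{\xi=\mathrm{const}\}$ and $z=u_\eta$ along $\{\eta=\mathrm{const}\}$ and closing with Gr\"onwall---is genuinely different from the paper's, which works entirely in $L^2$: it multiplies by $2\langle\eta\rangle^{2+2\delta}Z^au_\eta$ and integrates over the backward regions $S^-_{t_0,\xi_0}$ to obtain time-slice energies \emph{together with} characteristic fluxes and spacetime energies $\overline{\mathcal{E}}_k$, runs Gr\"onwall in $\xi_0$ for the linear $f'$ terms, and then uses the ghost weight $\psi(\eta)\phi(\xi)$ to generate the spacetime energy $\widehat{\mathcal{E}}_k$ that absorbs the quadratic terms in the $u_\xi$ estimate. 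Your zeroth-order step (the $K$-independent pointwise bound on $z$, the absorption argument for $w$, and the use of $\langle\xi\rangle^{1+\delta}f'\in L^2$) is sound and correctly identifies both roles of the decay rate $\tfrac32(1+\delta)$; the hierarchical order ($\eta$-derivatives first, then $\xi$-derivatives) also matches the paper.

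The gap is at the derivative level, which you dismiss with ``the newly appearing source $f''(\xi)z$ being handled exactly like $f'(\xi)z$.'' Differentiating $F(f'(\xi)+w,z)$ in $x$ produces, besides $F_\rho\cdot\tfrac12 f''$, the genuinely quadratic terms $F_\rho\,\partial_x w=\mathscr{O}(|z|)\,\partial_x w$ and $F_\theta\,\partial_x z=\mathscr{O}(|f'|+|w|)\,\partial_x z$. In the transport equation for $z_x$ along $\eta=\mathrm{const}$ the term $\mathscr{O}(|z|)\,w_x$ is a \emph{source}, and Gr\"onwall requires $\int_{-\eta}^{\xi}|w_x(\xi',\eta)|\,d\xi'$, an integral of a second derivative of $u$ along a null line transversal to the time slices; symmetrically, the $w_x$ equation requires $\int|z_x(\xi,\eta')|\,d\eta'$ along $\xi=\mathrm{const}$. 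Neither quantity is controlled by your bootstrap norm $X(t)$, which consists only of time-slice $L^2$ norms (converting via $\sup_t\| \cdot\|_{L^2_x}$ loses a factor of $t$), nor by Sobolev embedding (pointwise control of $w_x$ would need third derivatives of $u$ in $L^2$). Moreover each of the two null-line integrals is needed to close the Gr\"onwall estimate for the other, so the argument as written is circular. To repair it you must enlarge the bootstrap to include either characteristic/profile norms such as $\|\langle\xi\rangle^{1+\delta}\sup_\eta|\partial_x^l u_\xi|\|_{L^2_\xi}$ and $\|\langle\eta\rangle^{1+\delta}\sup_\xi|\partial_x^l u_\eta|\|_{L^2_\eta}$ (the classical characteristics route), or the spacetime weighted energies $\mathcal{E}_1,\mathcal{E}_2$ with the weights $\langle\xi\rangle^{-\frac{1+\delta}{2}}\langle\eta\rangle^{1+\delta}$ and $\langle\eta\rangle^{-\frac{1+\delta}{2}}\langle\xi\rangle^{1+\delta}$, which is exactly how the paper disposes of these transversal interactions.
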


\subsection{Quasilinear Systems}

Next, we turn to the full quasilinear setting, that is, we consider the system \eqref{quasiwave}, along with the conditions \eqref{order1}--\eqref{order4}.
Again, we set
\begin{align}
v=u+f(\xi) \text{,}
\end{align}
so that $u$ satisfies
\begin{align}\label{vsystem99999}
u_{\xi\eta} &= A_1(f'(\xi)+u_{\xi},u_{\eta})u_{\xi\eta}+A_2(f'(\xi)+u_{\xi},u_{\eta})u_{\eta\eta}+A_3(f'(\xi)+u_{\xi},u_{\eta})u_{\xi\xi} \nonumber\\
 &\qquad + A_3(f'(\xi)+u_{\xi},u_{\eta})f^{''}(\xi)+F(f'(\xi)+u_{\xi},u_{\eta}) \text{.}
\end{align}
In addition, fixing $0 < \delta < 1$, we assume
\begin{align}\label{fgty788899}
M_1 = \sup_{x\in \mathbb{R}} \left[ \langle x\rangle^{3(1+\delta)}(|f'(x)|+|f''(x)|+|f^{(3)}(x)|+|f^{(4)}(x)|) \right] < +\infty \text{.}
\end{align}

Note that, in view of \eqref{order1}--\eqref{order4}, the linearized system of \eqref{vsystem99999} about the zero solution (i.e., the linearized system of \eqref{quasiwave} with respect to $f(\xi)$) is
\begin{align}\label{vsystem9999900gg0}
u_{\xi\eta} &= A_1(f'(\xi),0) u_{\xi\eta} + A_2(f'(\xi),0) u_{\eta\eta} +{\text{lower-order terms.}}
\end{align}
To ensure the hyperbolicity of \eqref{vsystem9999900gg0}, we must assume there exists $\lambda>0$ such that
\begin{align}
\label{hyper1} y^{T} [ I- A_1(f'(\xi),0)-A_2(f'(\xi),0) ] y &\geq \lambda |y|^2 \text{,} \\
\label{hyper3} y^{T} [ I-A_1(f'(\xi),0) ] y &\geq \lambda|y|^2 \text{,}
\end{align}
for any $\xi \in \mathbb{R}$ and $y \in \mathbb{R}^n$.

Consider now the Cauchy problem for \eqref{vsystem99999}, with smooth initial data
 \begin{align}\label{ddffk789yh99}
u |_{ t=0 } = u_0 \text{,} \qquad u_t |_{ t = 0 } = u_1 \text{.}
 \end{align}
The main result of this paper is the following:

\begin{thm}\label{thn88786hhh9900}
Consider the Cauchy problem \eqref{vsystem99999}, \eqref{ddffk789yh99}.
Assume \eqref{order1}--\eqref{order4}, the decay property \eqref{fgty7888} (for some $0 < \delta < 1$), and the hyperbolicity conditions \eqref{hyper1}, \eqref{hyper3}.
Then, there is a constant $\varepsilon_0 > 0$---depending on $\delta$, $M_0$, and $\lambda$---such that for any $0<\varepsilon\leq \varepsilon_0$, if
\begin{align}
\label{small_quasilinear} \sum_{l=0}^{2} \left( \|\langle x\rangle^{1+\delta}\partial_x^{l}\partial_xu_0\|_{L_{x}^{2}(\mathbb{R})}+\|\langle x\rangle^{1+\delta}\partial_x^{l}u_1\|_{L_{x}^{2}(\mathbb{R})} \right) \leq \varepsilon \text{,}
\end{align}
then \eqref{vsystem99999}, \eqref{ddffk789yh99} admits a unique global classical solution $u$.
In other words, the traveling wave solution $f(\xi)$ to \eqref{quasiwave} is globally nonlinearly stable.
\end{thm}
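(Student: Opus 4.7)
The plan is a bootstrap/continuation argument built on a weighted energy functional adapted to the two characteristic directions. I would define
\begin{align*}
\mathcal{E}(t) = \sum_{l=0}^{2} \int_{\mathbb{R}} \Bigl( \langle \xi \rangle^{2(1+\delta)} |\partial_x^{l} u_\xi|^2 + \langle \eta \rangle^{2(1+\delta)} |\partial_x^{l} u_\eta|^2 \Bigr)\, dx,
\end{align*}
so that $\mathcal{E}(0) \lesssim \varepsilon^{2}$ by \eqref{small_quasilinear}. Assuming local well-posedness on some maximal interval $[0, T^{\ast})$, I would impose the bootstrap assumption $\mathcal{E}(t) \le C_{0}\varepsilon^{2}$ for $t \in [0, T^{\ast})$ and aim to improve it to $\mathcal{E}(t) \le \tfrac{1}{2} C_{0}\varepsilon^{2}$ provided $\varepsilon$ is sufficiently small; this upgrades the local solution to a global one by continuity.

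The energy identity itself comes from the first-order symmetric hyperbolic formulation of \eqref{vsystem99999} for the pair $(u_\xi, u_\eta)$. Thanks to the symmetry of $A_1, A_2, A_3$, I would test each commuted equation (for $\partial_x^{l}u$, $l = 0,1,2$) against $\langle \xi \rangle^{2(1+\delta)} \partial_x^{l} u_\eta$ and $\langle \eta \rangle^{2(1+\delta)} \partial_x^{l} u_\xi$, integrate by parts, and invoke the hyperbolicity conditions \eqref{hyper1}, \eqref{hyper3} --- which are preserved along the bootstrap by the smallness of $u_\xi$, $u_\eta$ --- to reduce the principal part of the identity to $\partial_t \mathcal{E}(t)$ plus an integrable error. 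Differentiating the weights produces an Alinhac-style ghost-weight bulk term
\begin{align*}
\sum_{l=0}^{2} \int_{0}^{t} \!\!\int_{\mathbb{R}} \Bigl( \langle \xi \rangle^{1+2\delta} |\partial_x^{l} u_\xi|^{2} + \langle \eta \rangle^{1+2\delta} |\partial_x^{l} u_\eta|^{2} \Bigr)\, dx\, ds,
\end{align*}
which is essential for absorbing the spacetime error terms below.

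The error terms are driven by the one-dimensional weighted Sobolev embedding: the bootstrap yields pointwise decay
\begin{align*}
|u_\xi(t,x)| + |\partial_x u_\xi(t,x)| \lesssim \varepsilon\, \langle \xi \rangle^{-\frac{1}{2}-\delta}, \qquad |u_\eta(t,x)| + |\partial_x u_\eta(t,x)| \lesssim \varepsilon\, \langle \eta \rangle^{-\frac{1}{2}-\delta}.
\end{align*}
Combined with \eqref{order1}--\eqref{order4} and \eqref{fgty788899}, every error term factors into pieces each of which decays in $\xi$ or $\eta$. For example, $F(f'+u_\xi, u_\eta) = \mathscr{O}((|f'|+|u_\xi|)|u_\eta|)$ contributes integrals that are absorbed by the ghost-weight gain, since its weighted $L^{2}$ norm is pointwise dominated by $\varepsilon \langle \xi \rangle^{-\frac{1}{2}-\delta} \langle \eta \rangle^{-\frac{1}{2}-\delta}$ times a top-order factor. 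The inhomogeneity $A_3(f'+u_\xi, u_\eta)\, f''(\xi)$ obeys $|A_3 f''| \lesssim \varepsilon\, \langle \eta \rangle^{-\frac{1}{2}-\delta} \langle \xi \rangle^{-3(1+\delta)}$, which, once squared and integrated, yields an $L^{1}_{t}$-summable source with norm $\mathscr{O}(\varepsilon^{2})$.

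The hardest step is the top-order quasilinear estimate. Commuting $\partial_x^{2}$ through terms such as $A_1(f'+u_\xi, u_\eta)\, u_{\xi\eta}$ generates derivatives of the coefficients hitting top-order quantities, for example $(\partial_x A_1)\, \partial_x u_{\xi\eta}$. Closing these contributions requires simultaneously (i) exploiting the $\mathscr{O}(|\rho|+|\theta|)$ structure in \eqref{order1}--\eqref{order33}, so that each derivative of $A_i$ inherits smallness or decay from $f'$, $u_\xi$, $u_\eta$; (ii) keeping the uncommuted piece $A_1(f'+u_\xi, u_\eta)\, \partial_x^{2} u_{\xi\eta}$ on the left-hand side, where the hyperbolicity assumptions \eqref{hyper1}, \eqref{hyper3} absorb it into the positive principal symbol; and (iii) using the ghost-weight gain to absorb mixed commutators that are only marginally small. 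This top-order analysis is precisely what forces one more derivative of data in \eqref{small_quasilinear} than in the semilinear bound \eqref{small_semilinear}, and is where the hyperbolicity assumption is genuinely used. Once the refined estimate $\mathcal{E}(t) \le \tfrac{1}{2} C_{0}\varepsilon^{2}$ is in hand, the continuation argument immediately yields the global solution asserted in the theorem.
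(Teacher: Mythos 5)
Your overall framework (null-adapted weighted energies, a ghost weight producing a spacetime bulk term, commuting two derivatives, bootstrap) is the right family of techniques, but the proposal has a gap that is fatal as written: it has no mechanism for the terms that are \emph{linear} in the perturbation with coefficients of size $M_0$ rather than $\varepsilon$. The source terms $F(f'(\xi)+u_\xi,u_\eta)$, $A_1(f'(\xi)+u_\xi,u_\eta)u_{\xi\eta}$ and $A_2(f'(\xi)+u_\xi,u_\eta)u_{\eta\eta}$ contain pieces like $\mathscr{O}(|f'(\xi)|)\,|u_\eta|$ and $\mathscr{O}(|f'(\xi)|)\,u_{\eta\eta}$, which in the energy identity for $\langle\eta\rangle^{1+\delta}Z^a u_\eta$ contribute roughly $M_0\,\langle\xi\rangle^{-1-\delta}\,|\langle\eta\rangle^{1+\delta}Z^au_\eta|^2$: there is no factor of $\varepsilon$, so your claim that these contributions are ``$\varepsilon\langle\xi\rangle^{-1/2-\delta}\langle\eta\rangle^{-1/2-\delta}$ times a top-order factor'' and hence absorbable is false. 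Absorbing them into the spacetime bulk term would require $CM_0<1$, i.e.\ a smallness assumption on the traveling wave that the theorem does not make; the resulting inequality $X\le C\varepsilon^2+CM_0X$ is vacuous otherwise. The paper's proof resolves exactly this by integrating the energy identity over the truncated regions $S^-_{t_0,\xi_0}$, collecting the characteristic fluxes $\int_{C^-_{t_0,\xi}}|\langle\eta\rangle^{1+\delta}Z^au_\eta|^2$, and applying the Gronwall inequality in the $\xi$-variable (Lemma \ref{gronwall}), which converts the $O(M_0)$ linear coefficient into a finite amplification $e^{CM_0}$ --- this is the ``exponential amplification localized in $\xi$'' that the introduction highlights, and it is the heart of the argument. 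Your proposal contains no analogue of this step.

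Two further structural issues. First, the bulk term you write, $\int\!\!\int(\langle\xi\rangle^{1+2\delta}|\partial_x^lu_\xi|^2+\langle\eta\rangle^{1+2\delta}|\partial_x^lu_\eta|^2)$, is not what a ghost weight produces and is not usable: a coercive spacetime gain must decay in the null variable \emph{transverse} to the propagation, i.e.\ $\langle\eta\rangle^{-1-\delta}\langle\xi\rangle^{2+2\delta}|Z^au_\xi|^2$ (from $\psi'(\eta)$) and $\langle\xi\rangle^{-1-\delta}\langle\eta\rangle^{2+2\delta}|Z^au_\eta|^2$ (from integrating the characteristic flux against $\langle\xi_0\rangle^{-1-\delta}$), since only then is the gain integrable along the relevant characteristics. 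Second, the top-order energies cannot all be closed by integration by parts with a single symmetric functional: the paper must first bound $\overline{E}_3,\overline{\mathcal{E}}_3$, then recover $\widetilde{E}_3,\widetilde{\mathcal{E}}_3$ (the $\langle\eta\rangle$-weighted $u_{\xi\xi\eta}$ and $\langle\xi\rangle$-weighted $u_{\eta\eta\xi}$) \emph{algebraically} from $(I-\widetilde{A}_1)u_{\xi\xi\eta}=G_3$ using \eqref{hyper3}, and only then close $\widehat{E}_3,\widehat{\mathcal{E}}_3$; moreover your functional assigns the weight $\langle\xi\rangle^{1+\delta}$ to every component of $\partial_x^2u_\xi$, including $u_{\xi\eta\eta}$, a combination the paper deliberately excludes from its energies. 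The hierarchical order of estimation ($\eta$-derivatives before $\xi$-derivatives) is not an optional refinement here but is forced by the structure of the linear terms discussed above.
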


In fact, Theorem \ref{thn88786hhh9900} can be extended to a wider class of systems:

\begin{rem}
Theorem {\rm{\ref{thn88786hhh9900}}} still holds when $A_1, A_2, A_3, F$ also depend on $v$ itself.
We omit the details in this paper to make the exposition simpler, but the methods are analogous to those in {\rm{\cite{MR4098041}}}, which proved global stability in this more general setting for small initial data.
\end{rem}

\begin{rem} \label{general}
Furthermore, by inspecting its proof, one can observe that Theorem {\rm{\ref{thn88786hhh9900}}} still holds when when the conditions \eqref{order22} and \eqref{order4} are relaxed to
\begin{align}
A_2&=\mathscr{O}(|\rho|+|h(\xi)||\theta|) \text{,} \\
F&=\mathscr{O}((|\rho|+|h(\xi)||\theta|)|\theta| \text{,}
\end{align}
where $h$ admits a suitable decay property, i.e.,
\begin{align}
\sup_{x \in \mathbb{R}} \left[ \langle x\rangle^{3(1+\delta)}(|h(x)|+|h'(x)|+|h''(x)|) \right] < + \infty \text{.}
\end{align}
\end{rem}

\subsection{Previous Results}

The question of global existence for nonlinear wave equations with small initial data has been under active investigation for the past four decades.
Generally speaking, in higher dimensions, the mechanism for treating this problem is based on the decay in time of linear waves.
In particular, it is well-known that the decay rate for $(d+1)$-dimensional linear waves is $(1+t)^{-\frac{d-1}{2}}$.
When $d\geq 4$, small data global existence results hold for generic quadratic or higher-order nonlinearities \cite{MR544044,Klainerman86}, since this decay rate is integrable in time.

When $d\leq 3$, however, classical solutions may blow up in finite time \cite{John1}.
Thus, Klainerman \cite{Klainerman82} introduced the so-called null condition to rule out the worst-behaving nonlinear terms that drive the formation of singularities.
Under this condition, global existence for small data was established in pioneering works \cite{Klainerman86, MR820070} in the $(3+1)$-dimensional case, and in \cite{Alinhac01} in the $(2+1)$-dimensional case (see also \cite{MR3729247,MR3912654} for more general settings and related results \cite{Cheng,Dong,Hou,Lihao,Peng,ZhaZha}).
In contrast, linear waves in one spatial dimension do not decay in time.
Nonetheless, for $(1+1)$-dimensional semilinear wave equations, Luli et al.~\cite{MR3783412} proved that small data still leads to global solutions if the null condition is satisfied. See also a former result in \cite{MR3121700}.
The mechanism for global existence here is the interaction of waves with different velocities, which will lead to the decay of nonlinear terms. The authors capture this mechanism in the $L^2$-framework, that is, they develop weighted energy estimates with positive weights, which are robust and can have many other applications. The quasilinear case was treated in \cite{MR4098041}, based on the weighted energy estimates with positive weights in \cite{MR3783412}, some space-time weighted energy estimates, and new observations on the null structure in the quasilinear part.

For $(1+1)$-dimensional first-order quasilinear hyperbolic systems which are strictly hyperbolic or admit characteristics with constant multiplicity, a powerful method is the characteristic approach (see {\rm{\cite{MR2309570,MR1284811,MR2045426,MR3730736,MR2544109}}}).
However, for $(1+1)$-dimensional second\emph{}-order quasilinear systems \eqref{quasiwave}, the characteristic approach does not generally work when $n > 1$ (non-scalar case).
In particular, if one reduces \eqref{quasiwave} to a first-order system, then it may not be strictly hyperbolic and may admit characteristics without constant multiplicity, which may fail to be differentiable with respect to their variables.
As a result, in this paper, we will develop energy-based methods based on \cite{MR3783412} and \cite{MR4098041}.

For previous results on the global stability of traveling waves for $(1+1)$-dimensional quasilinear hyperbolic systems, we refer the reader to \cite{MR3057301} for first-order quasilinear strictly hyperbolic systems with linearly degenerate characteristic fields (see also \cite{MR3223827}), and \cite{Wong22} for some scalar quasilinear wave equations with cubic nonlinearity satisfying double null conditions.
We note, however, that all these results are based on characteristic approaches.
We also note that some results in higher dimensions are shown in \cite{Wong33}, \cite{Anderson} and \cite{zhou}.

\subsection{Relation to Null Conditions}

Recall that in small data settings, \eqref{order1}--\eqref{order4} corresponds to the null condition for \eqref{quasiwave}.
Here, one views $v_\ast \equiv 0$ as a reference solution for \eqref{quasiwave}, for which $\partial_\xi$ and $\partial_\eta$ are both characteristic directions for every component of the system.
In particular, \eqref{order1}--\eqref{order4} rule out unfavorable nonlinear combinations of these characteristic derivatives.

In Theorem \ref{thn88786hhh9900}, however, the reference solution is now the traveling wave, $v_f = f ( \xi )$, hence the appropriate null conditions deal instead with characteristics associated with $v_f$.
Note that the ``leftward-pointing" $\partial_\eta$ (along level sets of $\xi$) remains a common characteristic direction for all components of the system \eqref{quasiwave}.
On the other hand, when $f ( \xi )$ is not small, $\partial_\xi$ can deviate significantly from the ``rightward" characteristic directions of $v_f$.
Moreover, various components of $v_f$ can have very different rightward characteristic directions.

As a result, \eqref{order1}--\eqref{order4} cannot quite be interpreted as a null condition, although it still has a closely connected meaning.
In particular, as $| \xi | \rightarrow \infty$, so that $f ( \xi )$ decays to $0$ by \eqref{fgty788899}, the rightward characteristic directions of all components of our system must asymptote to $\partial_\xi$.
Thus, one can view \eqref{order1}--\eqref{order4} as a genuine lefward-null condition for $\partial_\eta$-direction, and as an asymptotic null condition for the $\partial_\eta$-direction.
In particular, our results show that this partial null condition is sufficient for global stability.

\begin{rem}
Because of this asymptotic nature of \eqref{order1}--\eqref{order4}, we can also think of this as a special case of the weak null condition; see {\rm{\cite{LR03,MR2680391,Keir18}}} for details.
\end{rem}

\subsection{Main Features}

It is also instructive to compare our setting with that of \cite{Wong22}, which also studied the global stability of traveling waves for quasilinear wave equations.
First, since \cite{Wong22} dealt with scalar equations, they were able to effectively semilinearize, and hence simplify, the problem by changing to characteristic coordinates.
This cannot be done in the proof Theorem \ref{thn88786hhh9900}, as different components of our system can have very different characteristic directions.

We also mention that the nonlinear terms in \eqref{quasiwave} are weaker than those studied in \cite{Wong22}. Specifically speaking, even in the small data setting, the system studied in \cite{Wong22} is cubic and satisfies double null conditions, while the system \eqref{quasiwave} is quadratic and satisfies weaker algebraic properties.
Therefore, when considering some perturbation of traveling waves, one can expect worse behavior from these quadratic terms.
One consequence is that solutions to \eqref{vsystem99999} can exhibit a linear effect, in which the perturbation $u$ is exponentially amplified as it propagates transversely through the traveling wave $f ( \xi )$.
However, since this amplification is localized in $\xi$, due to the decay of $f ( \xi )$ from \eqref{fgty788899}, this phenomenon cannot lead to blowup as long as the initial perturbation is sufficiently small.

This same mechanism also applies to the nonlinear analysis.
While there are nonlinear terms that are far from satisfying a null condition, these dangerous terms must again be localized in $\xi$ and hence cannot lead to blowup.
(This is also the idea behind the generalization described in Remark \ref{general}.
Although one allows for additional nonlinearities that seem potentially dangerous, these terms are always localized in $\xi$.)

Finally, as mentioned before, Theorem \ref{thn88786hhh9900} is proved by developing energy approaches inspired by \cite{MR3783412} and \cite{MR4098041}.
The main innovations in our approach can be explained as follows.
First, a key difference from \cite{MR3783412} and \cite{MR4098041} is that the traveling wave yields a hierarchical structure for \eqref{vsystem99999}, in which derivatives of $u$ must be treated in a specific order.
In general, $\eta$-derivatives of $u$ (approximately along the traveling wave) must be controlled first, while the remaining $\xi$-derivatives can only be bounded after the $\eta$-derivatives are controlled.
Second, in the treatment of $\eta$-derivatives of $u$, in the top-order energy estimate, because we must first control linear terms, the approach of control for quasilinear terms is new and different from the corresponding one in the small data setting \cite{MR4098041}.

\subsection{Outline}

The outline of this paper is as follows.
In Section \ref{sec2}, we introduce various notations that will be used in the sequel, as well as some tools required for the proofs of Theorems \ref{thn88786hhh9} and \ref{thn88786hhh9900}.
Sections \ref{sec3} and \ref{sec4} are then devoted to the proofs of Theorems \ref{thn88786hhh9} and \ref{thn88786hhh9900}, respectively.

\section{Preliminaries}\label{sec2}

The purpose of this section to establish some notations and basic estimates that will be useful throughout the proofs of Theorems \ref{thn88786hhh9} and \ref{thn88786hhh9900}.
Throughout, we will use $C$ to denote universal positive constants whose values can change between lines.

\subsection{Notations} \label{secP}

Following \cite{MR3783412}, we first define several spacetime regions in $\mathbb{R}^{1+1}$.
Fix $t_0>0$ and $x_0\in \mathbb{R}$, and let $\xi_0=\frac{t_0+x_0}{2}$.
We then define the leftward segment $C^{-}_{t_0,\xi_0}$ as
\begin{align}
C^{-}_{t_0,\xi_0}&=\{(t,x): 0\leq t\leq t_0, x=2\xi_0-t\} \text{.}
\end{align}
In addition, we define the spatial segments
\begin{align}
\Sigma^{-}_{t_0,\xi_0}&=\{(t,x): t=t_0, x\leq 2\xi_0-t_0\} \text{,}
\end{align}
as well as the spacetime regions
\begin{align}
S^{-}_{t_0,\xi_0}&=\{(t,x): 0\leq t\leq t_0, x\leq 2\xi_0-t\} = \bigcup_{\xi\leq \xi_0}C^{-}_{t_0,\xi} \text{.}
\end{align}
Moreover, we use $\Sigma_{t_0}$ to denote the full time slice,
\begin{align}
\Sigma_{t_0} &= \{(t,x): t=t_0, x\in \mathbb{R}\} \text{,}
\end{align}
and we write $S_{t_0}$ to denote the spacetime strip
\begin{align}
S_{t_0}&=\{(t,x): 0\leq t\leq t_0, x\in \mathbb{R}\} \text{.}
\end{align}

By convention, we interpret integrals over $C^-_{ t_0, \xi_0 }$ as line integrals in the Euclidean sense,
\begin{align}
\int_{ C^-_{ t_0, \xi_0 } } w = \sqrt{2} \int_{ -\xi_0 }^{- \xi_0 + t_0 } w ( \xi_0 + \eta, \xi_0 - \eta ) \, d \eta \text{,}
\end{align}
where $w$ is expressed as a function of $( t, x )$.
Using Fubini's theorem, one can verify that
\begin{align}
\label{key1} \int_{ S^-_{t_0,\xi_0} } w (t,x) dxdt &= \sqrt{2} \int_{-\infty}^{\xi_0} \left( \int_{C^{-}_{t_0,\xi}} w \right) d\xi \text{,} \\
\label{key2} \int_{ S_{t_0} } w (t,x) dxdt &= \sqrt{2} \int_{\mathbb{R}} \left( \int_{ C^-_{t_0,\xi} } w \right) d\xi \text{.}
\end{align}

Next, we introduce the vector fields and energies used in this paper.
We will use
\begin{align}
Z=(\partial_{\xi},\partial_{\eta})
\end{align}
as the commuting vector fields.
For a multi-index $a=(a_1,a_2)$, we set
\begin{align}
Z^{a}=\partial_{\xi}^{a_1}\partial_{\eta}^{a_2} \text{,} \qquad |a| = a_1 + a_2 \text{.}
\end{align}

Following \cite{MR3783412},
we will employ the weighted first-order energy
\begin{align}\label{based}
E_1(u(t))=\overline{E}_1(u(t))+\widehat{E}_1(u(t)) \text{,}
\end{align}
where
\begin{align}\label{based1}
\overline{E}_1(u(t))&=\|\langle \eta\rangle^{1+\delta}u_{\eta}\|_{L_{x}^2(\Sigma_t)}^2,\\
\widehat{E}_1(u(t))&=\|\langle \xi\rangle^{1+\delta}u_{\xi}\|_{L_{x}^2(\Sigma_t)}^2.
\end{align}
To denote the second-order energies, we use
\begin{align}
\overline{E}_2(u(t)) &= \sum_{|a|= 1}\overline{E}_1(Z^{a}u(t))=\|\langle \eta\rangle^{1+\delta}u_{\xi\eta}\|_{L_{x}^2(\Sigma_t)}^2+\|\langle \eta\rangle^{1+\delta}u_{\eta\eta}\|_{L_{x}^2(\Sigma_t)}^2 \text{,} \\
\widehat{{E}}_2(u(t)) &= \sum_{|a|= 1}\widehat{{E}}_1(Z^{a}u(t))=\|\langle \xi\rangle^{1+\delta}u_{\xi\xi}\|_{L_{x}^2(\Sigma_t)}^2+\|\langle \xi\rangle^{1+\delta}u_{\eta\xi}\|_{L_{x}^2(\Sigma_t)}^2 \text{,}
\end{align}
as well as
\begin{align}
E_2(u(t))=\sum_{|a|= 1}E_1(Z^{a}u(t))=\overline{E}_2(u(t))+\widehat{{E}}_2(u(t)) \text{.}
\end{align}

For the quasilinear case, we also need third-order energies.
We denote
\begin{align}
\label{Pe3fff8833399993} \overline{{E}}_3(u(t))&=\sum_{\substack{|a|=2\\a_2\neq 0}}\overline{E}_1(Z^{a}u(t))
=\|\langle \eta\rangle^{1+\delta}u_{\xi\eta\eta}\|_{L_{x}^2(\Sigma_t)}^2+\|\langle \eta\rangle^{1+\delta}u_{\eta\eta\eta}\|_{L_{x}^2(\Sigma_t)}^2 \text{,} \\
\label{Pe3ghyu338733} \widehat{{E}}_3(u(t))&=\sum_{\substack{|a|=2\\a_1\neq 0}}\widehat{{E}}_1(Z^{a}u(t))
=\|\langle \xi\rangle^{1+\delta}u_{\xi\eta\xi}\|_{L_{x}^2(\Sigma_t)}^2+\|\langle \xi\rangle^{1+\delta}u_{\xi\xi\xi}\|_{L_{x}^2(\Sigma_t)}^2 \text{,} \\
\label{Pe3fff883333} \widetilde{E}_3(u(t))&=\sum_{\substack{|a|=2\\a_2= 0}}\overline{{E}}_1(Z^{a}u(t))+\sum_{\substack{|a|=2\\a_1=0}}\widehat{E}_1(Z^{a}u(t))\nonumber\\
&=\|\langle \eta\rangle^{1+\delta}u_{\xi\xi\eta}\|_{L_{x}^2(\Sigma_t)}^2+\|\langle \xi\rangle^{1+\delta}u_{\eta\eta\xi}\|_{L_{x}^2(\Sigma_t)}^2 \text{,}
\end{align}
and
\begin{align}\label{ertyuhhhhhh}
E_3(u(t))=\overline{{E}}_3(u(t))+\widehat{{E}}_3(u(t))+\widetilde{E}_3(u(t)) \text{.}
\end{align}
Finally, we define the total energy,
\begin{align}\label{ertyuhhh9988jjhhh}
E(u(t))=E_1(u(t))+E_2(u(t))+E_3(u(t)) \text{.}
\end{align}

Next, we introduce the analogous spacetime weighted energies, which are inspired by \cite{Alinhac01} and \cite{MR2680391}.
First, we set
\begin{align}
\mathcal{E}_1(u(t))&=\overline{\mathcal{E}}_1(u(t))+\widehat{\mathcal{E}}_1(u(t)) \text{,} \\
\overline{\mathcal{E}}_1(u(t))&=\|\langle \xi\rangle^{-\frac{1+\delta}{2}}\langle \eta\rangle^{1+\delta}u_{\eta}\|^2_{L_{\tau,x}^2(S_t)} \text{,} \\
\widehat{\mathcal{E}}_1(u(t))&=\|\langle \eta\rangle^{-\frac{1+\delta}{2}}\langle \xi\rangle^{1+\delta}u_{\xi}\|^2_{L_{\tau,x}^2(S_t)} \text{.}
\end{align}
Similarly, we denote the second order spacetime weighted energies by
\begin{align}
\mathcal{E}_2(u(t))&=\overline{\mathcal{E}}_2(u(t))+\widehat{\mathcal{E}}_2(u(t)) \text{,} \\
\overline{\mathcal{E}}_2(u(t))&=\sum_{|a|=1}\overline{\mathcal{E}}_1(Z^au(t))\nonumber\\
&=\|\langle \xi\rangle^{-\frac{1+\delta}{2}}\langle \eta\rangle^{1+\delta}u_{\xi\eta}\|^2_{L_{\tau,x}^2(S_t)}+\|\langle \xi\rangle^{-\frac{1+\delta}{2}}\langle \eta\rangle^{1+\delta}u_{\eta\eta}\|^2_{L_{\tau,x}^2(S_t)},\\
\widehat{\mathcal{E}}_2(u(t))&=\sum_{|a|=1}\widehat{{\mathcal{E}}}_1(Z^au(t))\nonumber\\
&=\|\langle \eta\rangle^{-\frac{1+\delta}{2}}\langle \xi\rangle^{1+\delta}u_{\xi\xi}\|^2_{L_{\tau,x}^2(S_t)}+\|\langle \eta\rangle^{-\frac{1+\delta}{2}}\langle \xi\rangle^{1+\delta}u_{\eta\xi}\|^2_{L_{\tau,x}^2(S_t)}.
\end{align}

Again, we will require third-order energies for the quasilinear setting,
\begin{align}
\label{Pe3fff88333999930} \overline{\mathcal {E}}_3(u(t))&=\sum_{\substack{|a|=2\\a_2\neq  0}}\overline{\mathcal {E}}_1(Z^au(t))\nonumber\\
&=\|\langle \xi\rangle^{-\frac{1+\delta}{2}}\langle \eta\rangle^{1+\delta}u_{\xi\eta\eta}\|_{L_{\tau,x}^2(S_t)}^2+\|\langle \xi\rangle^{-\frac{1+\delta}{2}}\langle \eta\rangle^{1+\delta}u_{\eta\eta\eta}\|_{L_{\tau,x}^2(S_t)}^2 \text{,} \\
\label{Pe3ghyu3387330} \widehat{\mathcal {E}}_3(u(t))&=\sum_{\substack{|a|=2\\a_1\neq 0}}\widehat{\mathcal {E}}_1(Z^au(t))\nonumber\\
&=\|\langle \eta\rangle^{-\frac{1+\delta}{2}}\langle \xi\rangle^{1+\delta}u_{\xi\eta\xi}\|^2_{L^2_{\tau,x}(\Sigma_t)}+\|\langle \eta\rangle^{-\frac{1+\delta}{2}}\langle \xi\rangle^{1+\delta}u_{\xi\xi\xi}\|_{L_{\tau,x}^2(\Sigma_t)}^2 \text{,} \\
\label{Pe3fff8833330} \widetilde{\mathcal {E}}_3(u(t))&=\sum_{\substack{|a|=2\\a_2=  0}}\overline{\mathcal {E}}_1(Z^au(t))+\sum_{\substack{|a|=2\\a_1= 0}}\widehat{\mathcal {E}}_1(Z^au(t))\nonumber\\
&=\|\langle \xi\rangle^{-\frac{1+\delta}{2}}\langle \eta\rangle^{1+\delta}u_{\xi\xi\eta}\|_{L_{\tau,x}^2(S_t)}^2+\|\langle \eta\rangle^{-\frac{1+\delta}{2}}\langle \xi\rangle^{1+\delta}u_{\eta\eta\xi}\|_{L_{\tau,x}^2(S_t)}^2 \text{,}
\end{align}
and
\begin{align}\label{ertyuhhhhhh0}
\mathcal {E}_3(u(t))=\overline{\mathcal {E}}_3(u(t))+\widehat{\mathcal {E}}_3(u(t))+\widetilde{\mathcal {E}}_3(u(t)) \text{.}
\end{align}
Finally, we denote the total spacetime weighted energy by
\begin{align}\label{ertyuhhh9988jjhhh0}
\mathcal {E}(u(t))=\mathcal {E}_1(u(t))+\mathcal {E}_2(u(t))+\mathcal {E}_3(u(t)).
\end{align}

Fix $0<\delta<1$ (as in Theorem \ref{thn88786hhh9} or \ref{thn88786hhh9900}).
We then define the weights
\begin{align}\label{weight1}
\phi(x)=\langle x\rangle^{2+2\delta} \text{,} \qquad \psi(x)= \exp \left( -\int_{-\infty}^{x}\langle \rho\rangle^{-1-\delta}d\rho \right) \text{.}
\end{align}
Direct computations yield that
\begin{align}\label{weight2}
|\phi'(x)|\leq 4\langle x\rangle^{1+2\delta} \text{,} \qquad \psi'(x)=-\psi(x)\langle x\rangle^{-1-\delta} \text{.}
\end{align}
We also note that there exists a positive constant $c$ such that
\begin{align}\label{weight3}
c^{-1}\leq \psi(x)\leq 1 \text{,}
\end{align}
which then implies
\begin{align}\label{weight4}
c^{-1}\langle x\rangle^{-1-\delta}\leq -\psi'(x)\leq \langle x\rangle^{-1-\delta} \text{.}
\end{align}

\subsection{Some Estimates}

Here, we collect some basic estimates that will be useful throughout the proofs of our main theorems.
We let $f ( \xi )$ be a traveling wave, as in the setting of Theorem \ref{thn88786hhh9} or \ref{thn88786hhh9900}.

For convenience, we will make use of the following shorthands:
\begin{align}
\label{inviewof} \widetilde{F}(\xi,\eta)&=F(f'(\xi)+u_{\xi},u_{\eta}) \text{,} \\
\label{inviewof2} \widetilde{A}_i(\xi,\eta)&=A_i(f'(\xi)+u_{\xi},u_{\eta}) \text{,} \quad i=1,2,3 \text{.}
\end{align}
We will also use the notations
\begin{align}
\overline{A}_2(\xi,\eta)&=A_2(f'(\xi),u_{\eta}) \text{,} \\
\overline{\overline{A}}_2(\xi,\eta)&=\widetilde{A}_2(\xi,\eta)-\overline{A}_2(\xi,\eta) \text{.}
\end{align}

The next three lemmas are consequences of the fundamental theorem of calculus, the chain rule, and the Leibniz rule.
For the semilinear case (Theorem \ref{thn88786hhh9}), we need the following:

\begin{Lemma}\label{yinli111}
Assume $F$ satisfies \eqref{order4}, $f$ satisfies \eqref{fgty7888}, and
\begin{align}
\label{yinli111_ass} |u_{\xi}|+|u_{\eta}|\leq \nu_{0}
\end{align}
for some constant $\nu_0 > 0$.
Then, we have
 \begin{align}
 |\partial_{\xi}\widetilde{F}| &\leq C(|f{''}(\xi)||u_{\eta}|+|f{'}(\xi)||u_{\xi\eta}|+|u_{\eta}||u_{\xi\xi}|+|u_{\xi}||u_{\xi\eta}|) \text{,} \\
 |\partial_{\eta}\widetilde{F}| &\leq C(|f{'}(\xi)||u_{\eta\eta}|+|u_{\xi}||u_{\eta\eta}|+|u_{\eta}||u_{\xi\eta}|) \text{.}
 \end{align}
In particular, for any multi-index $a$, with $|a|\leq 1$, we have
 \begin{align}
 |Z^{a}\widetilde{F}| &\leq C(|Z^{a}f'(\xi)||u_{\eta}|+|f{'}(\xi)||Z^au_{\eta}|+|Z^au_{\xi}||u_{\eta}|+|u_{\xi}||Z^au_{\eta}|) \text{.}
 \end{align}
Here, the constants $C$ depend on $M_0$ and $\nu_0$.
\end{Lemma}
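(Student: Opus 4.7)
The plan is to reduce everything to the chain rule together with a Hadamard-type factorization of $F$ that follows from the null structure \eqref{order4}.

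First, I would use the fundamental theorem of calculus (applied twice, or equivalently Hadamard's lemma) to write
\begin{equation*}
F(\rho, \theta) = \sum_{i,j} \rho_i \theta_j\, G_{ij}(\rho, \theta)
\end{equation*}
for some smooth matrix-valued coefficients $G_{ij}$; this is possible precisely because \eqref{order4} tells us that $F$ vanishes on the hyperplanes $\{\rho=0\}$ and $\{\theta=0\}$. Under the pointwise bound \eqref{yinli111_ass} and the decay assumption \eqref{fgty7888}, the arguments $(f'(\xi)+u_\xi, u_\eta)$ stay in a fixed compact subset of $\mathbb{R}^n \times \mathbb{R}^n$ (with size controlled by $M_0$ and $\nu_0$), so on this region $G_{ij}$ and its first derivatives are bounded. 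Differentiating the factorization therefore yields
\begin{equation*}
|\partial_\rho F(\rho,\theta)| \leq C |\theta|, \qquad |\partial_\theta F(\rho,\theta)| \leq C |\rho|,
\end{equation*}
uniformly on the relevant region, where $C$ depends only on $M_0$ and $\nu_0$.

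Next, I would apply the chain rule directly to $\widetilde{F}(\xi,\eta) = F(f'(\xi)+u_\xi, u_\eta)$, giving
\begin{align*}
\partial_\xi \widetilde{F} &= (\partial_\rho F)(f'(\xi)+u_\xi, u_\eta)\bigl(f''(\xi) + u_{\xi\xi}\bigr) + (\partial_\theta F)(f'(\xi)+u_\xi, u_\eta)\, u_{\xi\eta}, \\
\partial_\eta \widetilde{F} &= (\partial_\rho F)(f'(\xi)+u_\xi, u_\eta)\, u_{\xi\eta} + (\partial_\theta F)(f'(\xi)+u_\xi, u_\eta)\, u_{\eta\eta}.
\end{align*}
Substituting the two derivative bounds from the previous step, with $\rho = f'(\xi)+u_\xi$ (so $|\rho| \leq |f'(\xi)| + |u_\xi|$) and $\theta = u_\eta$, and then applying the triangle inequality, produces exactly the two stated pointwise estimates for $|\partial_\xi \widetilde{F}|$ and $|\partial_\eta \widetilde{F}|$.

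Finally, the combined estimate for $|a| \leq 1$ is just a uniform rewriting of the three cases. For $a = 0$ it is a direct restatement of \eqref{order4}; for $a = (1,0)$ we have $Z^a f'(\xi) = f''(\xi)$, $Z^a u_\xi = u_{\xi\xi}$, $Z^a u_\eta = u_{\xi\eta}$, which reproduces the $\partial_\xi \widetilde{F}$ bound, and for $a = (0,1)$ we have $Z^a f'(\xi) = 0$, $Z^a u_\xi = u_{\xi\eta}$, $Z^a u_\eta = u_{\eta\eta}$, which reproduces the $\partial_\eta \widetilde{F}$ bound. The main (mild) obstacle is simply verifying the Hadamard factorization carefully so that the derivative estimates on $F$ are correctly quantified in terms of the opposite variable; everything after that is bookkeeping with the chain rule.
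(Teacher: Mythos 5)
Your proof is correct and follows essentially the route the paper intends: the paper gives no written proof, only the remark that the lemma is "a consequence of the fundamental theorem of calculus, the chain rule, and the Leibniz rule," and your Hadamard factorization $F = \sum_{i,j}\rho_i\theta_j G_{ij}$ is exactly the fundamental-theorem-of-calculus step, after which the chain rule and the identifications $Z^a f'$, $Z^a u_\xi$, $Z^a u_\eta$ for $|a|\le 1$ give the stated bounds. The only point worth noting is that both your argument and the lemma itself implicitly read \eqref{order4} as holding on the compact set $\{|\rho|\le M_0+\nu_0,\ |\theta|\le\nu_0\}$ (so that $F$ vanishes on the relevant pieces of the hyperplanes $\{\rho=0\}$ and $\{\theta=0\}$ there), which is consistent with the constants being allowed to depend on $M_0$ and $\nu_0$.
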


The following will be used in the quasilinear case (Theorem \ref{thn88786hhh9900}):

\begin{Lemma}\label{le55mmmad2222}
Assume $A_1$, $A_2$, $A_3$, and $F$ satisfy \eqref{order1}--\eqref{order4}; $f$ satisfies \eqref{fgty788899}; and
\begin{align}
\label{le55mmmad2222_ass} |u_{\xi}|+|u_{\eta}|+|u_{\xi\xi}|+|u_{\xi\eta}|+|u_{\eta\eta}|\leq \nu_1
\end{align}
for some constant $\nu_1 > 0$.
Then, we have
\begin{align}
|\widetilde{A}_1|&\leq C(|f'(\xi)|+|u_{\xi}|+|u_{\eta}|) \text{,}~~~~~~~|\widetilde{A}_2|\leq C(|f'(\xi)|+|u_{\xi}|) \text{,} \\
|\overline{A}_2|&\leq C|f'(\xi)| \text{,} \quad |\overline{\overline{A}}_2|\leq C|u_{\xi}| \text{,}~~~~~~|\widetilde{A}_3|\leq C|u_{\eta}| \text{,} \\
|\partial_{\xi}\widetilde{A}_1| & \leq C(|f''(\xi)|+|u_{\xi\xi}|+|u_{\xi\eta}|) \text{,}~~~~
|\partial_{\eta}\widetilde{A}_1| \leq C(|u_{\xi\eta}|+|u_{\eta\eta}|) \text{,} \\
|\partial_{\xi}\widetilde{A}_2| &\leq C(|f''(\xi)|+|u_{\xi\xi}|+|u_{\xi\eta}|) \text{,}~~~~
|\partial_{\eta}\widetilde{A}_2| \leq C(|f'(\xi)|+|u_{\xi}|+|u_{\xi\eta}|) \text{,} \\
|\partial_{\xi}\overline{A}_2| &\leq C(|f'(\xi)|+|f''(\xi)|) \text{,}~~~~~~~~~~~
|\partial_{\eta}\overline{A}_2| \leq C|f'(\xi)| \text{,} \\
|\partial_{\xi}\widetilde{A}_3| &\leq C(|u_{\eta}|+|u_{\xi\eta}|) \text{,}~~~~~~~~~~~~~~~~~
|\partial_{\eta}\widetilde{A}_3| \leq C(|u_{\eta}|+|u_{\eta\eta}|) \text{,} \\
|\partial_{\xi}\widetilde{F}| &\leq C(|f{''}(\xi)||u_{\eta}|+|f{'}(\xi)||u_{\xi\eta}|+|u_{\eta}||u_{\xi\xi}|+|u_{\xi}||u_{\xi\eta}|) \text{,} \\
|\partial_{\eta}\widetilde{F}| &\leq C(|f{'}(\xi)||u_{\eta\eta}|+|u_{\xi}||u_{\eta\eta}|+|u_{\eta}||u_{\xi\eta}|) \text{,}
\end{align}
where the constants $C$ depend on $M_1$ and $\nu_1$.
\end{Lemma}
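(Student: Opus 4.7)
The lemma is a catalogue of pointwise bounds obtained by combining three ingredients: the structural conditions \eqref{order1}--\eqref{order4}, the chain rule, and the uniform a priori bounds \eqref{fgty788899} on $f$ and \eqref{le55mmmad2222_ass} on the first and second derivatives of $u$. The latter confine the arguments $(f'(\xi)+u_\xi, u_\eta)$ (and also $(f'(\xi), u_\eta)$ for $\overline{A}_2$) to a fixed compact subset of $\mathbb{R}^n\times\mathbb{R}^n$ of diameter $\lesssim M_1+\nu_1$, on which every smooth partial derivative of $A_1, A_2, A_3, F$ is uniformly bounded; constants depending only on $M_1,\nu_1$ may therefore be absorbed into $C$ throughout.

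I would first dispatch the zeroth-order bounds on $\widetilde{A}_1, \widetilde{A}_2, \widetilde{A}_3$ and $\overline{A}_2$ by substituting the arguments into \eqref{order1}--\eqref{order3} and applying the triangle inequality. For $\overline{\overline{A}}_2 = \widetilde{A}_2-\overline{A}_2$ I would use the fundamental theorem of calculus in the first slot,
\begin{equation}
\overline{\overline{A}}_2(\xi,\eta) = \int_0^1 (\partial_\rho A_2)\bigl(f'(\xi)+s u_\xi,\, u_\eta\bigr)\, u_\xi\, ds,
\end{equation}
together with the smooth factorization $A_2(\rho,\theta)=\rho\cdot B(\rho,\theta)$ provided by \eqref{order22}, which yields $|\partial_\rho A_2|\le C$ on the relevant compact set, and hence $|\overline{\overline{A}}_2|\le C|u_\xi|$.

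For the first-order derivative estimates I would apply the chain rule in the form
\begin{align}
\partial_\xi \widetilde{A}_i &= (\partial_\rho A_i)\bigl(f'(\xi)+u_\xi,u_\eta\bigr)(f''(\xi)+u_{\xi\xi}) + (\partial_\theta A_i)\bigl(f'(\xi)+u_\xi,u_\eta\bigr) u_{\xi\eta}, \\
\partial_\eta \widetilde{A}_i &= (\partial_\rho A_i)\bigl(f'(\xi)+u_\xi,u_\eta\bigr) u_{\xi\eta} + (\partial_\theta A_i)\bigl(f'(\xi)+u_\xi,u_\eta\bigr) u_{\eta\eta},
\end{align}
and similarly for $\overline{A}_2$ (where only $f'(\xi)$ appears in the first slot, so the $u_{\xi\xi}$ contribution drops out) and for $\widetilde{F}$. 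Smoothly factoring \eqref{order1}--\eqref{order4} one reads off the vanishing orders $\partial_\rho A_1,\partial_\theta A_1=\mathscr{O}(1)$; $\partial_\rho A_2=\mathscr{O}(1)$, $\partial_\theta A_2=\mathscr{O}(|\rho|)$; $\partial_\rho A_3=\mathscr{O}(|\theta|)$, $\partial_\theta A_3=\mathscr{O}(1)$; and $\partial_\rho F=\mathscr{O}(|\theta|)$, $\partial_\theta F=\mathscr{O}(|\rho|)$. Substituting $\rho=f'(\xi)+u_\xi$, $\theta=u_\eta$ (or $\rho=f'(\xi)$ for $\overline{A}_2$), and absorbing bounded factors such as $|f''|+|u_{\xi\xi}|$ into $C(M_1,\nu_1)$ produces each listed inequality. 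For instance, in $|\partial_\xi\widetilde{F}|$ the term $|u_\eta||u_{\xi\xi}|$ comes from $(\partial_\rho F)\,u_{\xi\xi}$ with $\partial_\rho F=\mathscr{O}(|u_\eta|)$, and the term $(|f'|+|u_\xi|)|u_{\xi\eta}|$ comes from $(\partial_\theta F)\,u_{\xi\eta}$ with $\partial_\theta F=\mathscr{O}(|f'|+|u_\xi|)$.

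There is no genuine obstacle: the proof is mechanical bookkeeping once the chain rule is unwound. The only point requiring minor care is extracting the correct orders of vanishing of $\partial_\rho A_i,\partial_\theta A_i,\partial_\rho F,\partial_\theta F$ from the pointwise assumptions \eqref{order1}--\eqref{order4}; this is standard, relying on the fact that a smooth function vanishing to first order in $\rho$ at $\rho=0$ can be written as $\rho$ times a smooth function of $(\rho,\theta)$ (and analogously in $\theta$, or jointly in both variables for $F$).
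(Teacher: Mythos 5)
Your proposal is correct and takes the same approach that the paper indicates for this lemma (the paper only states that Lemmas 2.1–2.3 "are consequences of the fundamental theorem of calculus, the chain rule, and the Leibniz rule," leaving the details implicit). Your bookkeeping of the vanishing orders $\partial_\rho A_i$, $\partial_\theta A_i$, $\partial_\rho F$, $\partial_\theta F$ via smooth factorization, combined with the fundamental theorem of calculus for $\overline{\overline{A}}_2$, the chain rule, and absorption of factors bounded by $M_1+\nu_1$ into the constant, reproduces every listed inequality accurately.
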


\begin{Lemma}\label{xhjk999k99}
Assume $A_1$, $A_2$, $A_3$, and $F$ satisfy \eqref{order1}--\eqref{order4}; $f$ satisfies \eqref{fgty788899}; and
\begin{align}
\label{xhjk999k99_ass} |u_{\xi}|+|u_{\eta}|+|u_{\xi\xi}|+|u_{\xi\eta}|+|u_{\eta\eta}|\leq \nu_1
\end{align}
for some constant $\nu_1 > 0$.
Then for any multi-index $a$, with $|a|= 2$, we have
\begin{align}
| Z^a \widetilde{A}_1| &\leq C\big(|f''(\xi)|+|f^{(3)}(\xi)|\big)+C\sum_{|b|\leq 2}\big(|Z^{b}u_{\xi}|+|Z^{b}u_{\eta}|\big) \text{,} \\
|Z^a\widetilde{A}_2| &\leq C|f'(\xi)||u_{\eta\eta\eta}|+C\big(|f'(\xi)|+|f''(\xi)|+|f^{(3)}(\xi)|\big)\nonumber\\
&\qquad + C\sum_{|b|\leq 2}|Z^{b}u_{\xi}|+C|u_{\xi}|\sum_{|b|\leq 2}|Z^{b}u_{\eta}| \text{,} \\
|Z^a\widetilde{A}_3| &\leq C\sum_{|b|\leq 2}|Z^{b}u_{\eta}|+C|u_{\eta}|\sum_{|b|\leq 2}|Z^{b}u_{\xi}| \text{,} \\
|\partial^2_{\xi}\widetilde{F}| &\leq C\big(|f'(\xi)||u_{\xi\xi\eta}|+|f''(\xi)||u_{\xi\eta}|+|f''(\xi)||u_{\eta}|+|f^{(3)}(\xi)||u_{\eta}|\big)\nonumber\\
&\qquad +C\sum_{|b|+|c|\leq 2}|Z^{b}u_{\xi}||Z^{c}u_{\eta}| \text{,} \\
|\partial^2_{\eta}\widetilde{F}| &\leq C|f'(\xi)||u_{\eta\eta\eta}|+C|f'(\xi)||u_{\eta}|+C\sum_{|b|+|c|\leq 2}|Z^{b}u_{\xi}||Z^{c}u_{\eta}| \text{,} \\
|\partial^2_{\xi\eta}\widetilde{F}| &\leq C|f'(\xi)||u_{\xi\eta\eta}|+C|f''(\xi)||u_{\eta\eta}|+C\sum_{|b|+|c|\leq 2}|Z^{b}u_{\xi}||Z^{c}u_{\eta}| \text{,}
\end{align}
where the constants $C$ depend on $M_1$ and $\nu_1$.
\end{Lemma}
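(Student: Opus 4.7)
The plan is to extend Lemma~\ref{le55mmmad2222} to second-order derivatives by a direct application of the chain rule and Leibniz rule, then invoke the order conditions \eqref{order1}--\eqref{order4}, the decay hypothesis \eqref{fgty788899}, and the $L^\infty$ bound \eqref{xhjk999k99_ass} to group the resulting terms into the claimed buckets. First I record the ``derivative dictionary'': for any smooth $G(\rho,\theta)$,
\begin{align*}
\partial_\xi [G(f'(\xi)+u_\xi,u_\eta)] &= G_\rho\cdot(f''(\xi)+u_{\xi\xi}) + G_\theta\cdot u_{\xi\eta}, \\
\partial_\eta [G(f'(\xi)+u_\xi,u_\eta)] &= G_\rho\cdot u_{\xi\eta} + G_\theta\cdot u_{\eta\eta},
\end{align*}
and second derivatives are obtained by reapplying these identities, producing terms of two types: (i) a second partial $G_{\rho\rho}$, $G_{\rho\theta}$, or $G_{\theta\theta}$ multiplied by a product of two first-order derivatives of $(f'+u_\xi,u_\eta)$, or (ii) a first partial $G_\rho$ or $G_\theta$ multiplied by a second-order derivative of $(f'+u_\xi,u_\eta)$.

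Second, I would translate the order hypotheses into pointwise size bounds on the partials of $G\in\{A_1,A_2,A_3,F\}$, evaluated at $(f'(\xi)+u_\xi, u_\eta)$. From $A_1 = \mathscr O(|\rho|+|\theta|)$, all first and second partials of $A_1$ are uniformly bounded. From $A_2 = \mathscr O(|\rho|)$ we have $A_2(0,\theta)\equiv 0$, so $\partial_\theta A_2$ and $\partial_\theta^2 A_2$ inherit a factor $\mathscr O(|\rho|)=\mathscr O(|f'|+|u_\xi|)$, while $\partial_\rho A_2$, $\partial_\rho^2 A_2$, and $\partial_{\rho\theta}^2 A_2$ are merely $\mathscr O(1)$; symmetrically for $A_3$. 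From $F = \mathscr O(|\rho||\theta|)$, differentiating the vanishing relations $F(0,\theta)\equiv 0 \equiv F(\rho,0)$ gives $\partial_\rho F,\partial_\rho^2 F = \mathscr O(|\theta|)$, $\partial_\theta F,\partial_\theta^2 F = \mathscr O(|\rho|)$, and $\partial_{\rho\theta}^2 F = \mathscr O(1)$.

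Finally, I would assemble the estimates by inserting the derivative-dictionary expansions, invoking the above bounds on the partials, and then using \eqref{xhjk999k99_ass} and \eqref{fgty788899} to absorb quadratic combinations of uniformly bounded quantities by linear ones (e.g.\ $(|f''|+|u_{\xi\xi}|)^2 \lesssim |f''|+|u_{\xi\xi}|$ and $|u_{\xi\eta}|\cdot|u_{\eta\eta}| \lesssim |u_{\xi\eta}|+|u_{\eta\eta}|$); each resulting term then slots into one of the groupings on the right-hand sides. The main obstacle is not any individual estimate but the combinatorial bookkeeping across the three choices $a=(2,0),(1,1),(0,2)$ together with the need to place each generated term in the correct bucket. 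The sharpest bookkeeping occurs for $\widetilde F$, where one must verify that every occurrence of a third-order $u$-derivative is accompanied by $|f'|$, $|f''|$, $|u_\xi|$, or $|u_\eta|$ --- exactly the vanishing inherited from $F = \mathscr O(|\rho||\theta|)$ --- so that no bare $|Z^3 u|$ term escapes the claimed bounds.
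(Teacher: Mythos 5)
Your proposal is correct and is exactly the argument the paper intends: the paper gives no proof of this lemma, stating only that it is a consequence of the fundamental theorem of calculus, the chain rule, and the Leibniz rule, which is precisely the computation you outline (transfer the vanishing encoded in \eqref{order1}--\eqref{order4} to the first and second partials of $A_i$ and $F$, expand $Z^a$ via your derivative dictionary, and absorb bounded factors using \eqref{fgty788899} and \eqref{xhjk999k99_ass}). The one spot to watch in the bookkeeping is the term $F_{\theta\theta}\,u_{\eta\eta}^2 = \mathscr{O}\big((|f'(\xi)|+|u_\xi|)\,|u_{\eta\eta}|^2\big)$ in $\partial_\eta^2\widetilde{F}$, whose piece $|f'(\xi)|\,|u_{\eta\eta}|^2\leq C|f'(\xi)|\,|u_{\eta\eta}|$ is not literally of the form $|f'(\xi)||u_{\eta}|$ or $|Z^b u_\xi||Z^c u_\eta|$ as written on the right-hand side --- a harmless imprecision in the lemma's statement rather than a flaw in your method, since $|f'(\xi)||u_{\eta\eta}|$ is absorbed in exactly the same way as $|f'(\xi)||u_\eta|$ in every subsequent application (e.g.\ in \eqref{xuyao9jllj566}).
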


We will make use of the following Gronwall inequality, first established in \cite{Bellman}, in order to control the linear parts of our systems:

\begin{Lemma}\label{gronwall}
Assume that $h, \alpha$ and $\beta$ are nonnegative and smooth functions on $\mathbb{R}$, with sufficient decay at infinity.
If $h$ satisfies
\begin{align}
h(\xi_0)\leq \alpha(\xi_0)+\int_{-\infty}^{\xi_0}\beta(\xi)h(\xi)d\xi \text{,}
\end{align}
for each $\xi_0 \in \mathbb{R}$, then the following also holds for any $\xi_0 \in \mathbb{R}$:
\begin{align}
h(\xi_0)\leq \alpha(\xi_0)+\int_{-\infty}^{\xi_0}\alpha(\xi)\beta(\xi) \exp \bigg( \int_{\xi}^{\xi_0}\beta(\rho)d\rho \bigg) d\xi.
\end{align}
\end{Lemma}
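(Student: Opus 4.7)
The plan is to reduce this to the classical Gronwall argument by introducing an auxiliary quantity that absorbs the integral term and then solving the resulting differential inequality with an integrating factor. The only subtlety compared with the textbook version on a half-line $[0,\infty)$ is that here the integration starts at $-\infty$, so one must use the decay hypothesis at $-\infty$ to make the integrating factor well-defined and to discard the boundary term.

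First, I would define
\begin{equation*}
H(\xi_0) = \int_{-\infty}^{\xi_0} \beta(\xi) h(\xi)\, d\xi,
\end{equation*}
which, thanks to the assumed decay at $-\infty$, is a well-defined nonnegative smooth function with $H(\xi_0) \to 0$ as $\xi_0 \to -\infty$ and $H'(\xi_0) = \beta(\xi_0) h(\xi_0)$. Substituting the hypothesis $h(\xi_0) \leq \alpha(\xi_0) + H(\xi_0)$ into this derivative yields the differential inequality
\begin{equation*}
H'(\xi_0) - \beta(\xi_0) H(\xi_0) \leq \alpha(\xi_0)\,\beta(\xi_0).
\end{equation*}

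Next, I would multiply by the integrating factor
\begin{equation*}
\mu(\xi_0) = \exp\!\Big( -\!\!\int_{-\infty}^{\xi_0} \beta(\rho)\, d\rho \Big),
\end{equation*}
which is finite and bounded between $0$ and $1$ by the decay of $\beta$. This converts the inequality into
\begin{equation*}
\frac{d}{d\xi_0}\bigl[ \mu(\xi_0) H(\xi_0) \bigr] \leq \alpha(\xi_0)\,\beta(\xi_0)\,\mu(\xi_0).
\end{equation*}
Integrating from $-\infty$ to $\xi_0$ and using $\mu(\xi_0) H(\xi_0) \to 0$ at $-\infty$ (again by the decay hypotheses on $h$ and $\beta$), I obtain
\begin{equation*}
H(\xi_0) \leq \mu(\xi_0)^{-1} \int_{-\infty}^{\xi_0} \alpha(\xi)\,\beta(\xi)\,\mu(\xi)\, d\xi = \int_{-\infty}^{\xi_0} \alpha(\xi)\,\beta(\xi)\, \exp\!\Big( \int_{\xi}^{\xi_0} \beta(\rho)\, d\rho \Big)\, d\xi.
\end{equation*}
Substituting this bound back into the hypothesis $h(\xi_0) \leq \alpha(\xi_0) + H(\xi_0)$ yields the claimed inequality.

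The only real obstacle is the bookkeeping at $-\infty$: one needs the decay hypotheses on $h, \alpha, \beta$ to guarantee both that $H$ is finite and that $\mu(\xi_0) H(\xi_0)$ vanishes as $\xi_0 \to -\infty$, so that the boundary contribution in the integration step drops out. Apart from that, the argument is exactly the standard integrating-factor proof of Gronwall's inequality, merely transported from $[0,\infty)$ to $(-\infty, \xi_0]$.
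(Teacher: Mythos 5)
Your argument is correct: it is the standard integrating-factor proof of this Gronwall-type inequality, and the points you flag at $-\infty$ (finiteness of $\int_{-\infty}^{\xi_0}\beta$ so that the integrating factor $\mu$ makes sense, finiteness of $H$, and the vanishing of $\mu(\xi_0)H(\xi_0)$ as $\xi_0\to-\infty$ so the boundary term drops) are exactly what the ``sufficient decay'' hypothesis is there to supply. The paper does not prove this lemma at all --- it simply cites Bellman's 1943 paper --- so there is no in-text argument to compare against; your write-up is a correct, self-contained replacement for that citation.
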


The following pointwise estimates will be used frequently in the sequel:

\begin{Lemma}\label{xuyao8DD8899}
Let $u$ be a smooth function with sufficient decay at spatial infinity.
Then,
\begin{align}\label{xuyao217}
\|\langle\xi\rangle^{1+\delta}u_{\xi}\|_{L_{x}^{{\infty}}(\Sigma_t)}+\|\langle \eta\rangle^{1+\delta}u_{\eta}\|_{L_{x}^{{\infty}}(\Sigma_t)}&\leq C
E_1^{1/2}(u(t))+C E_2^{1/2}(u(t)) \text{,}
\end{align}
and
\begin{align}\label{xuyao219}
&\|\langle\eta\rangle^{-\frac{1+\delta}{2}}\langle\xi\rangle^{1+\delta}u_{\xi}\|_{L^2_{\tau}L_{x}^{{\infty}}(S_t)}+\|\langle\xi\rangle^{-\frac{1+\delta}{2}}\langle \eta\rangle^{1+\delta}u_{\eta}\|_{L^2_{\tau}L_{x}^{{\infty}}(S_t)}\nonumber\\
&\quad \leq C \mathcal {E}_1^{1/2}(u(t))+C \mathcal {E}_2^{1/2}(u(t)).
\end{align}
\end{Lemma}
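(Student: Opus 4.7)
\textbf{Proof proposal for Lemma \ref{xuyao8DD8899}.}
Both estimates are consequences of the standard one-dimensional Sobolev embedding
\begin{equation*}
\|w\|_{L^\infty_x(\mathbb{R})}^2 \;\leq\; 2\,\|w\|_{L^2_x(\mathbb{R})}\,\|\partial_x w\|_{L^2_x(\mathbb{R})} \text{,}
\end{equation*}
which follows from $|w(x)|^2 = 2\int_{-\infty}^{x} w\,\partial_x w\, dx'$ together with the decay of $u$ at spatial infinity. The plan is to apply this to suitably weighted versions of $u_\xi$ and $u_\eta$, compute the $x$-derivative of the weighted quantity, and show that each term produced is either contained in the first-order energies (modulo harmless weight loss) or in the second-order energies.

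For the first inequality, I would take $w(t,x) = \langle\xi\rangle^{1+\delta}u_\xi(t,x)$ and observe that $\|w\|_{L^2_x(\Sigma_t)} = \widehat{E}_1^{1/2}(u(t))$ by definition. Then, using $\partial_x\xi = 1/2$ and $\partial_x = \tfrac{1}{2}(\partial_\xi - \partial_\eta)$, one has
\begin{equation*}
\partial_x w \;=\; \tfrac{1+\delta}{2}\,\xi\,\langle\xi\rangle^{\delta-1}\,u_\xi \;+\; \tfrac{1}{2}\langle\xi\rangle^{1+\delta}(u_{\xi\xi}-u_{\xi\eta}) \text{.}
\end{equation*}
Since $|\xi|\langle\xi\rangle^{\delta-1} \leq \langle\xi\rangle^{\delta} \leq \langle\xi\rangle^{1+\delta}$, the first summand is controlled in $L^2_x$ by $\widehat{E}_1^{1/2}(u(t))$, while the remaining two summands are controlled in $L^2_x$ by $\widehat{E}_2^{1/2}(u(t))$, using the definition of $\widehat{E}_2$. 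Sobolev then yields $\|w\|_{L^\infty_x(\Sigma_t)} \leq C(E_1^{1/2}+E_2^{1/2})$ after applying Young's inequality to $\|w\|_{L^2}^{1/2}\|\partial_x w\|_{L^2}^{1/2}$. The companion estimate for $\langle\eta\rangle^{1+\delta}u_\eta$ is entirely symmetric (one uses $\overline{E}_1$, the second-order energies with $\langle\eta\rangle$-weights, and the identity $\partial_x\eta = -1/2$).

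For the spacetime bound \eqref{xuyao219}, I would run the same argument with the modified weight $w(\tau,x) = \langle\eta\rangle^{-\frac{1+\delta}{2}}\langle\xi\rangle^{1+\delta}u_\xi$, apply the Sobolev embedding in $x$ slice by slice at each fixed time $\tau$, and then integrate in $\tau$ using Cauchy--Schwarz:
\begin{equation*}
\|w\|_{L^2_\tau L^\infty_x(S_t)}^2 \;\leq\; 2\int_0^t \|w(\tau,\cdot)\|_{L^2_x}\,\|\partial_x w(\tau,\cdot)\|_{L^2_x}\, d\tau \;\leq\; \|w\|_{L^2_{\tau,x}(S_t)}^2 + \|\partial_x w\|_{L^2_{\tau,x}(S_t)}^2 \text{.}
\end{equation*}
Then $\|w\|_{L^2_{\tau,x}(S_t)} = \widehat{\mathcal{E}}_1^{1/2}(u(t))$, and when we expand $\partial_x w$ by Leibniz, the derivative falling on $\langle\eta\rangle^{-\frac{1+\delta}{2}}$ produces a factor $|\eta|\langle\eta\rangle^{-\frac{3+\delta}{2}-1} \lesssim \langle\eta\rangle^{-\frac{1+\delta}{2}}$, while the derivative falling on $\langle\xi\rangle^{1+\delta}$ produces $\langle\xi\rangle^{\delta} \leq \langle\xi\rangle^{1+\delta}$; both therefore return to $\widehat{\mathcal{E}}_1^{1/2}$. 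The remaining term, where $\partial_x$ hits $u_\xi$ and produces $u_{\xi\xi}$ and $u_{\xi\eta}$ with the same weights, is controlled by $\widehat{\mathcal{E}}_2^{1/2}$. The $\eta$-weighted companion is again symmetric.

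I do not anticipate any genuine obstacle here; the entire content of the lemma is a careful bookkeeping of weights to verify that differentiating a weighted quantity in $x$ does not produce a weight stronger than the one it starts with, and that the second-order weighted energies have been defined in precisely the form needed to absorb the $u_{\xi\xi}$, $u_{\xi\eta}$, $u_{\eta\eta}$ terms that appear. The only mildly delicate point is the lossless comparison $|\xi|\langle\xi\rangle^{\delta-1} \leq \langle\xi\rangle^{1+\delta}$ (and its $\eta$-analogue) that makes the weight-derivative terms disappear into $\widehat{E}_1$ and $\widehat{\mathcal{E}}_1$ without ever needing $E_2$ or $\mathcal{E}_2$.
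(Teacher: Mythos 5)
Your proposal is correct and takes essentially the same route as the paper: apply the one-dimensional Sobolev embedding $H^1(\mathbb{R})\hookrightarrow L^\infty(\mathbb{R})$ to the weighted quantities slice by slice, and use that $\partial_x$ of each weight is pointwise bounded by a constant times the weight itself, so the weight-derivative terms fall back into the first-order energies while the genuine derivative terms land in the second-order energies. The paper's proof is the same argument stated tersely; you have merely filled in the Leibniz computation and the weight comparisons explicitly.
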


\begin{proof}
These follow from the Sobolev embedding $H^1(\mathbb{R})\hookrightarrow L^{\infty}(\mathbb{R})$ and the estimates
\begin{align*}
|\partial_{x}\langle\xi\rangle^{1+\delta}|\leq C\langle\xi\rangle^{1+\delta} \text{,} &\qquad |\partial_{x}(\langle\eta\rangle^{-\frac{1+\delta}{2}}\langle\xi\rangle^{1+\delta})|\leq C \langle\eta\rangle^{-\frac{1+\delta}{2}}\langle\xi\rangle^{1+\delta} \text{,} \\
|\partial_{x}\langle\eta\rangle^{1+\delta}|\leq C\langle\eta\rangle^{1+\delta} \text{,} &\qquad |\partial_{x}(\langle\xi\rangle^{-\frac{1+\delta}{2}}\langle\eta\rangle^{1+\delta})|\leq C \langle\xi\rangle^{-\frac{1+\delta}{2}}\langle\eta\rangle^{1+\delta} \text{.} \qedhere
\end{align*}
 \end{proof}

The following bounds, which are similarly proved, are needed in the quasilinear case:

\begin{Lemma}\label{xu899quasilinear }
Let $u$ be a smooth function with sufficient decay at spatial infinity.
Then,
\begin{align}\label{xuyao217quasi}
\sum_{|a|\leq 1} \left[ \|\langle\xi\rangle^{1+\delta}Z^{a}u_{\xi}\|_{L_{x}^{{\infty}}(\Sigma_t)}+\|\langle \eta\rangle^{1+\delta}Z^{a}u_{\eta}\|_{L_{x}^{{\infty}}(\Sigma_t)} \right] &\leq C E^{1/2}(u(t)) \text{,}
\end{align}
and
\begin{align}\label{xuyao219quasi}
&\sum_{|a|\leq 1} \left[ \|\langle\eta\rangle^{-\frac{1+\delta}{2}}\langle\xi\rangle^{1+\delta}Z^{a}u_{\xi}\|_{L^2_{\tau}L_{x}^{{\infty}}(S_t)}+\|\langle\xi\rangle^{-\frac{1+\delta}{2}}\langle \eta\rangle^{1+\delta}Z^{a}u_{\eta}\|_{L^2_{\tau}L_{x}^{{\infty}}(S_t)} \right] \nonumber\\
&\quad \leq C \mathcal {E}^{1/2}(u(t)) \text{.}
\end{align}
 \end{Lemma}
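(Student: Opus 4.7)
The plan is to mimic the proof of Lemma \ref{xuyao8DD8899}, applying the Sobolev embedding $H^{1}(\mathbb{R})\hookrightarrow L^{\infty}(\mathbb{R})$ separately to each of the six weighted quantities in the sum $\sum_{|a|\leq 1}$. Concretely, I will use $\|w\|_{L^{\infty}_{x}(\Sigma_{t})}\leq C\bigl(\|w\|_{L^{2}_{x}(\Sigma_{t})}+\|\partial_{x}w\|_{L^{2}_{x}(\Sigma_{t})}\bigr)$ with $w=\langle\xi\rangle^{1+\delta}Z^{a}u_{\xi}$ and $w=\langle\eta\rangle^{1+\delta}Z^{a}u_{\eta}$. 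The key observation, already exploited in Lemma \ref{xuyao8DD8899}, is that $|\partial_{x}\langle\xi\rangle^{1+\delta}|\leq C\langle\xi\rangle^{1+\delta}$ and $|\partial_{x}\langle\eta\rangle^{1+\delta}|\leq C\langle\eta\rangle^{1+\delta}$, so differentiating the weight costs nothing, and the task reduces to matching the resulting weighted second- and third-order derivatives of $u$ in $L^{2}_{x}$ against summands of $E_{2}$ and $E_{3}$.

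For the $|a|=0$ contributions the bound reduces exactly to that of Lemma \ref{xuyao8DD8899}. For $|a|=1$, I will write $\partial_{x}=\tfrac{1}{2}(\partial_{\xi}-\partial_{\eta})$ and use the symmetry of mixed partials. The second-order $L^{2}_{x}$ pieces that then appear (such as $\|\langle\xi\rangle^{1+\delta}u_{\xi\xi}\|_{L^{2}_{x}}$ and $\|\langle\eta\rangle^{1+\delta}u_{\xi\eta}\|_{L^{2}_{x}}$) are all summands of $\widehat{E}_{2}$ and $\overline{E}_{2}$. The third-order pieces that arise are, exhaustively, $\|\langle\xi\rangle^{1+\delta}u_{\xi\xi\xi}\|_{L^{2}_{x}}$ and $\|\langle\xi\rangle^{1+\delta}u_{\xi\eta\xi}\|_{L^{2}_{x}}$ (in $\widehat{E}_{3}$, using $u_{\xi\xi\eta}=u_{\xi\eta\xi}$), $\|\langle\eta\rangle^{1+\delta}u_{\xi\eta\eta}\|_{L^{2}_{x}}$ and $\|\langle\eta\rangle^{1+\delta}u_{\eta\eta\eta}\|_{L^{2}_{x}}$ (in $\overline{E}_{3}$), and the two mismatched-weight terms $\|\langle\eta\rangle^{1+\delta}u_{\xi\xi\eta}\|_{L^{2}_{x}}$ and $\|\langle\xi\rangle^{1+\delta}u_{\eta\eta\xi}\|_{L^{2}_{x}}$, which are precisely the summands of $\widetilde{E}_{3}$ in \eqref{Pe3fff883333}. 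Summing produces the bound by $CE^{1/2}(u(t))$.

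The spacetime inequality \eqref{xuyao219quasi} follows from the identical argument: apply Sobolev slicewise in $x$, then take the $L^{2}_{\tau}$ norm, using the analogous mixed-weight bounds $|\partial_{x}(\langle\eta\rangle^{-(1+\delta)/2}\langle\xi\rangle^{1+\delta})|\leq C\langle\eta\rangle^{-(1+\delta)/2}\langle\xi\rangle^{1+\delta}$ and its swap, already recorded in the proof of Lemma \ref{xuyao8DD8899}. The resulting third-order $L^{2}_{\tau,x}$ terms match the summands of $\overline{\mathcal{E}}_{3}$, $\widehat{\mathcal{E}}_{3}$, $\widetilde{\mathcal{E}}_{3}$ from \eqref{Pe3fff88333999930}--\eqref{Pe3fff8833330} in the same way. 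There is no genuine obstacle; the proof is essentially bookkeeping, and the only step requiring care is verifying that every third-order derivative produced by $\partial_{x}$ lands in one of the three third-order energy pieces---which is precisely why $\widetilde{E}_{3}$ (and $\widetilde{\mathcal{E}}_{3}$) was introduced.
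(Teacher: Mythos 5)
Your proposal is correct and is exactly the argument the paper intends: the text states that this lemma is ``similarly proved'' to Lemma \ref{xuyao8DD8899}, i.e.\ by slicewise Sobolev embedding $H^1(\mathbb{R})\hookrightarrow L^\infty(\mathbb{R})$ together with the weight-derivative bounds, and your bookkeeping correctly accounts for all six third-order terms, including the two mismatched-weight ones absorbed by $\widetilde{E}_3$ and $\widetilde{\mathcal{E}}_3$.
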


\section{Proof of Theorem \ref{thn88786hhh9}}\label{sec3}

Assume in this section the hypotheses of Theorem \ref{thn88786hhh9}.
By standard local well-posedness and continuation results for semilinear wave equations (see, e.g., \cite{hor:lnheq}) and the relations \eqref{eq.uv_vf}, it suffices to show that our solution $u$ to \eqref{vsystem999} and \eqref{ddffk789yh} satisfies
\begin{equation}
\label{semilinear_goal} E_1 ( u (t) ) + E_2 ( u (t) ) \leq C^\ast \text{,}
\end{equation}
for a fixed constant $C^\ast > 0$ and for all $t > 0$ in the lifespan of $u$.

For this, we apply a bootstrap argument.
More specifically, in the remainder of this section, we assume $u$ is such a small solution on the time interval $[0, T]$, and that
\begin{align}\label{dddddd}
\sup_{0\leq t\leq T} [ E_1(u(t))+E_2(u(t))+\mathcal {E}_1(u(t))+\mathcal {E}_2(u(t)) ] \leq 4 A^2 \varepsilon^2 \text{,}
\end{align}
for a constant $A$, to be chosen later.
Our goal is then to establish the improved estimate
 \begin{align}\label{rgty6788888}
\sup_{0\leq t\leq T} [ E_1(u(t))+E_2(u(t))+\mathcal {E}_1(u(t))+\mathcal {E}_2(u(t)) ] \leq A^2 \varepsilon^2 \text{.}
\end{align}
The desired \eqref{semilinear_goal} then follows immediately from the above.

Note the assumption \eqref{small_semilinear}, together with \eqref{eq.uv_vf} and the system \eqref{vsystem999}, implies
\begin{equation}
\label{semilinear_id} E_1(u(0)) + E_2(u(0)) \leq C_0 \varepsilon^2 \text{,}
\end{equation}
for some constant $C_0 > 0$.
Furthermore, Lemma \ref{xuyao8DD8899} and \eqref{dddddd} imply that \eqref{yinli111_ass} holds, with $\nu_0 \simeq \varepsilon$, and hence the conclusions of Lemma \ref{yinli111} hold throughout this section.

\subsection{Energy Estimates}

In view of \eqref{inviewof}, the system \eqref{vsystem999} can be written as
\begin{align}
u_{\xi\eta}=\widetilde{F}.
\end{align}
For an multi-index $a$ with $|a|=0, 1$, we have
\begin{align}\label{df56666444}
Z^{a}u_{\xi\eta}=Z^{a}\widetilde{F}.
\end{align}

Fix $t_0 \in [0,T]$.
We first bound $\overline{E}_1(u(t_0))$, $\overline{E}_2(u(t_0))$, $\overline{\mathcal {E}}_1(u(t_0))$, and $\overline{\mathcal {E}}_2(u(t_0))$ using the energy method from \cite{MR3783412}, along with the Gronwall inequality for the linear part.
Multiplying both sides of \eqref{df56666444} by $2\langle \eta \rangle^{2+2\delta}Z^au^{T}_{\eta}$ and applying the Leibniz rule yields
\begin{align}\label{fghh6778yyy877}
(|\langle \eta \rangle^{1+\delta}Z^au_{\eta}|^2)_{\xi}=2\langle \eta \rangle^{2+2\delta}Z^au^{T}_{\eta}Z^{a}\widetilde{F} \text{.}
\end{align}
Fix, in addition, $\xi_0 \in \mathbb{R}$.
Integrating both sides of \eqref{fghh6778yyy877} over $S^{-}_{t_0,\xi_0}$, we then obtain
\begin{align}\label{23344455fg}
&\int_{\Sigma^{-}_{t_0,\xi_0}}|\langle \eta \rangle^{1+\delta}Z^au_{\eta}|^2 dx+{\sqrt{2}}\int_{C^{-}_{t_0,\xi_0}}|\langle \eta \rangle^{1+\delta}Z^au_{\eta}|^2\nonumber\\
&\quad = \int_{\Sigma^{-}_{0,\xi_0}}|\langle \eta \rangle^{1+\delta}Z^au_{\eta}|^2 dx + 2 \int_{S^{-}_{t_0,\xi_0}}\langle \eta \rangle^{2+2\delta}Z^au^{T}_{\eta}Z^{a}\widetilde{F} \text{.}
\end{align}

By Lemma \ref{yinli111}, \eqref{fgty7888}, and \eqref{key1},
\begin{align}
&\|\langle \eta \rangle^{2+2\delta}Z^au^{T}_{\eta}Z^{a}\widetilde{F}\|_{L^1_{t,x}(S^{-}_{t_0,\xi_0})}\nonumber\\
&\quad \leq C\big\|\langle \eta \rangle^{2+2\delta}|u_{\xi}||Z^au_{\eta}|^2\big\|_{L^1_{t,x}(S_{t_0})} + C\big\|\langle \eta \rangle^{2+2\delta}|u_{\eta}||Z^au_{\xi}||Z^{a}u_{\eta}|\big\|_{L^1_{t,x}(S_{t_0})} \nonumber \\
&\quad\qquad + C\big\||f'(\xi)|\langle \eta \rangle^{2+2\delta}|Z^{a}u_{\eta}|^2\big\|_{L^1_{t,x}(S^{-}_{t_0,\xi_0})} + C \big\||f''(\xi)|\langle \eta \rangle^{2+2\delta}|u_{\eta}||Z^{a}u_{\eta}|\big\|_{L^1_{t,x}(S^{-}_{t_0,\xi_0})} \nonumber\\
&\quad \leq C\|\langle \xi \rangle^{1+\delta} u_{\xi}\|_{L^{\infty}_{t,x}(S_{t_0})}\|\langle \xi \rangle^{-\frac{1+\delta}{2}}\langle \eta \rangle^{1+\delta}Z^au_{\eta}\|_{L^2_{t,x}(S_{t_0})}^2 \nonumber \\
&\quad \qquad + C \|\langle \xi \rangle^{-\frac{1+\delta}{2}}\langle \eta \rangle^{1+\delta}u_{\eta}\|_{L^{2}_{t}L^{\infty}_{x}(S_{t_0})}\|\langle \xi \rangle^{1+\delta}Z^au_{\xi}\|_{L^{\infty}_{t}L^2_{x}(S_{t_0})} \nonumber \\
&\quad \qquad \qquad \cdot \|\langle \xi \rangle^{-\frac{1+\delta}{2}}\langle \eta \rangle^{1+\delta}Z^{a}u_{\eta}\|_{L^2_{t,x}(S_{t_0})} \nonumber \\
&\quad \qquad + C\int_{-\infty}^{\xi_0}\langle \xi\rangle^{-1-\delta} \bigg[ \int_{C^{-}_{t_0,\xi}}(|\langle \eta \rangle^{1+\delta}Z^{a}u_{\eta}|^2+|\langle \eta \rangle^{1+\delta}u_{\eta}||\langle \eta \rangle^{1+\delta}Z^{a}u_{\eta}|) \bigg] d\xi \text{.}
\end{align}
Applying Lemma \ref{xuyao8DD8899} and summing over $a$, we then have
\begin{align}\label{fgghy67}
&\sum_{|a|\leq 1}\|\langle \eta \rangle^{2+2\delta}Z^au^{T}_{\eta}Z^{a}\widetilde{F}\|_{L^1_{t,x}(S^{-}_{t_0,\xi_0})} \nonumber \\
&\quad \leq C \sup_{0\leq t\leq t_0} \big[ E^{1/2}_1(u(t)) + E^{1/2}_2(u(t)) \big] [ \mathcal {E}_1(u(t_0)+\mathcal {E}_2(u(t_0)) ] \nonumber \\
&\quad \qquad + C \sum_{|a|\leq 1}\int_{-\infty}^{\xi_0}\langle \xi\rangle^{-1-\delta} \bigg( \int_{C^{-}_{t_0,\xi}}(|\langle \eta \rangle^{1+\delta} Z^a u_{\eta}|^2 \bigg) d\xi \text{.}
\end{align}

Next, from \eqref{semilinear_id}, \eqref{23344455fg}, and \eqref{fgghy67}, we see that
\begin{align}\label{23344455fffg}
&\sum_{|a|\leq 1}\int_{\Sigma^{-}_{t_0,\xi_0}}|\langle \eta \rangle^{1+\delta}Z^au_{\eta}|^2 dx+\sum_{|a|\leq 1}\int_{C^{-}_{t_0,\xi_0}}|\langle \eta \rangle^{1+\delta}Z^au_{\eta}|^2 \nonumber \\
&\quad \leq C \varepsilon^2 + C A^3 \varepsilon^3 + C \sum_{|a|\leq 1}\int_{-\infty}^{\xi_0}\langle \xi\rangle^{-1-\delta} \bigg( \int_{C^{-}_{t_0,\xi}}(|\langle \eta \rangle^{1+\delta}Z^{a}u_{\eta}|^2 \bigg) d\xi \text{.}
\end{align}
Then, it follows from the Gronwall inequality (Lemma \ref{gronwall}) that
\begin{align} \label{23344455ffghhhfg}
&\sum_{|a|\leq 1}\int_{\Sigma^{-}_{t_0,\xi_0}}|\langle \eta \rangle^{1+\delta}Z^au_{\eta}|^2 dx+\sum_{|a|\leq 1}\int_{C^{-}_{t_0,\xi_0}}|\langle \eta \rangle^{1+\delta}Z^au_{\eta}|^2\leq C\varepsilon^2+CA^3\varepsilon^3 \text{.}
\end{align}
Since $\xi_0$ is arbitrary, we conclude that
\begin{align}
\sum_{|a| \leq 1}\|\langle \eta\rangle^{1+\delta}Z^au_{\eta}\|_{L_{x}^2(\Sigma_{t_0})}^2\leq C\varepsilon^2+CA^3\varepsilon^3 \text{.}
\end{align}

Similarly, from \eqref{key2} and \eqref{23344455ffghhhfg}, we have
\begin{align}\label{fgggt5788}
\sum_{|a|\leq 1}\|\langle \xi\rangle^{-\frac{1+\delta}{2}}\langle \eta\rangle^{1+\delta}Z^au_{\eta}\|_{L_{t,x}^2(S_{t_0})}^2 &\leq C\sum_{|a|\leq 1}\int_{\mathbb{R}} \langle \xi_0\rangle^{-1-\delta} \left( \int_{C^{-}_{t_0,\xi_0}}|\langle \eta \rangle^{1+\delta}Z^au_{\eta}|^2 \right) d \xi_0 \nonumber \\
&\leq C\varepsilon^2+CA^3\varepsilon^3 \text{.}
\end{align}
Thus, combining \eqref{23344455ffghhhfg} and \eqref{fgggt5788} results in the energy bounds
\begin{align}\label{313}
\overline{E}_1(u(t_0))+\overline{E}_2(u(t_0))+\overline{\mathcal {E}}_1(u(t_0))+\overline{\mathcal {E}}_2(u(t_0))\leq C\varepsilon^2+CA^3\varepsilon^3.
\end{align}

We now turn to the estimates for $\widehat{E}_1(u(t_0)), \widehat{E}_2(u(t_0)), \widehat{\mathcal {E}}_1(u(t_0))$, and $\widehat{\mathcal {E}}_2(u(t_0))$.
We employ the energy approach in \cite{MR4098041}, and we control the linear part using on \eqref{313}.

Multiplying both sides of \eqref{df56666444} by $2\psi(\eta)\phi(\xi)Z^au^T_{\xi}$, we obtain
\begin{align}
(\psi(\eta)\phi(\xi)|Z^au_{\xi}|^2)_{\eta}-\psi'(\eta)\phi(\xi)|Z^au_{\xi}|^2
=2\psi(\eta)\phi(\xi)Z^au^T_{\xi}Z^a\widetilde{F} \text{.}
\end{align}
Integrating the above over $S_{ t_0 }$, we see that
\begin{align}\label{cxuai89yu99}
&\int_{\Sigma_{t_0}}\psi(\eta)\phi(\xi)|Z^au_{\xi}|^2dx-\int_{S_{t_0}}\psi'(\eta)\phi(\xi)|Z^au_{\xi}|^2 \nonumber \\
&\quad = \int_{\Sigma_{0}}\psi(\eta)\phi(\xi)|Z^au_{\xi}|^2dx +2\int_{S_{t_0}}\psi(\eta)\phi(\xi)Z^au^T_{\xi}Z^a\widetilde{F} \text{.}
\end{align}
In view of \eqref{cxuai89yu99}, \eqref{weight1}, \eqref{weight3}, \eqref{weight4}, and \eqref{semilinear_id}, we then obtain
\begin{align}\label{316}
&\widehat{E}_1(u(t_0))+\widehat{E}_2(u(t_0))+\widehat{\mathcal {E}}_1(u(t_0))+\widehat{\mathcal {E}}_2(u(t_0)) \nonumber \\
&\quad \leq C \varepsilon^2+C\sum_{|a|\leq 1}\|\langle \xi\rangle^{2+2\delta}Z^au^T_{\xi}Z^a\widetilde{F}\|_{L^1_{t,x}(S_{t_0})} \text{.}
\end{align}

By Lemma \ref{yinli111}, Lemma \ref{xuyao8DD8899}, and \eqref{fgty7888}, we estimate
\begin{align}\label{317}
&C \|\langle \xi\rangle^{2+2\delta}Z^au^T_{\xi}Z^a\widetilde{F}\|_{L^1_{t,x}(S_{t_0})} \nonumber \\
&\quad \leq C\big\|\langle \xi\rangle^{2+2\delta}|u_\eta||Z^au_{\xi}|^2\big\|_{L^1_{t,x}(S_{t_0})}+\big\|\langle \xi\rangle^{2+2\delta}|u_{\xi}||Z^au_{\xi}||Z^au_{\eta}|\big\|_{L^1_{t,x}(S_{t_0})} \nonumber \\
&\quad \qquad + C\big\||f'(\xi)|\langle \xi \rangle^{2+2\delta}|Z^{a}u_{\xi}||Z^{a}u_{\eta}|\big\|_{L^1_{t,x}(S_{t_0})}+C\big\||f''(\xi)|\langle \xi \rangle^{2+2\delta}|u_{\eta}||Z^{a}u_{\xi}|\big\|_{L^1_{t,x}(S_{t_0})} \nonumber \\
&\quad \leq C\|\langle \eta\rangle^{{1+\delta}}u_\eta\|_{{L^{\infty}_{t,x}(S_{t_0})}} \|\langle \eta\rangle^{-\frac{1+\delta}{2}}\langle \xi\rangle^{1+\delta}Z^au_{\xi}\|^2_{L^2_{t,x}(S_{t_0})} \nonumber \\
&\quad \qquad + C\|\langle \eta\rangle^{{1+\delta}}Z^au_\eta\|_{{L^{\infty}_{t}L^{2}_{x}(S_{t_0})}} \|\langle \eta\rangle^{-\frac{1+\delta}{2}}\langle \xi\rangle^{1+\delta}u_{\xi}\|_{L^2_{t}L^{\infty}_{x}(S_{t_0})} \nonumber \\
&\quad \qquad \qquad \cdot \|\langle \eta\rangle^{-\frac{1+\delta}{2}}\langle \xi\rangle^{1+\delta}Z^au_{\xi}\|_{L^2_{t,x}(S_{t_0})} \nonumber \\
&\quad \qquad +C\|\langle \xi\rangle^{\frac{3(1+\delta)}{2}}(|f'(\xi)|+|f''(\xi)|)\|_{L^{\infty}_{t,x}(S_{t_0})} \nonumber \\
&\quad \qquad \qquad \cdot \big\||\langle \eta\rangle^{-\frac{1+\delta}{2}}\langle \xi\rangle^{1+\delta}Z^{a}u_{\xi}\|_{L^2_{t,x}(S_{t_0})}\|\langle \xi\rangle^{-\frac{1+\delta}{2}}\langle \eta\rangle^{1+\delta}(|u_{\eta}|+|Z^{a}u_{\eta}|)\|_{L^2_{t,x}(S_{t_0})}\nonumber\\
&\quad \leq C \sup_{0\leq t\leq t_0} \big[ E^{1/2}_1(u(t)) + E^{1/2}_2(u(t)) \big] [ \mathcal {E}_1(u(t_0)+\mathcal {E}_2(u(t_0)) ] \nonumber \\
&\quad \qquad + C \big[ \overline{\mathcal {E}}_1(u(t_0)+\overline{\mathcal {E}}_2(u(t_0)) \big] + \frac{1}{100} \big[ \widehat{\mathcal {E}}_1(u(t_0))+\widehat{\mathcal {E}}_2(u(t_0)) \big] \text{.}
\end{align}
Combining \eqref{313}, \eqref{316}, and \eqref{317}, we then conclude
\begin{align}\label{318}
&\widehat{E}_1(u(t_0))+\widehat{E}_2(u(t_0))+\widehat{\mathcal {E}}_1(u(t_0))+\widehat{\mathcal {E}}_2(u(t_0))\leq C\varepsilon^2+CA^3\varepsilon^3 \text{.}
\end{align}

\subsection{Conclusion of the Proof}

From \eqref{313} and \eqref{318}, we have the estimate
\begin{align}
\sup_{0 \leq t\leq T} [ E_1(u(t))+E_2(u(t))+\mathcal {E}_1(u(t))+\mathcal {E}_2(u(t)) ] &\leq C_1 \varepsilon^2 + C_2 A^3 \varepsilon^3 \text{,}
\end{align}
for universal constants $C_1$ and $C_2$.
Taking $A^2 = 2 \max \{C_0, C_1\}$ and $\varepsilon_0$ sufficiently small such that $2 C_2 A \varepsilon_0 \leq 1$ results in the bound \eqref{rgty6788888} and completes the proof of Theorem \ref{thn88786hhh9}.

\section{Proof of Theorem \ref{thn88786hhh9900}} \label{sec4}

The key step in this proof is a bootstrap argument analogous to that in the previous section.
We assume the hypotheses of Theorem \ref{thn88786hhh9900}, and we let $u$ be a classical solution to \eqref{vsystem99999}, \eqref{ddffk789yh99} on $[0,T]$.
In addition, we assume the bound
\begin{align}\label{ddddde}
\sup_{0\leq t\leq T} [ E(u(t))+\mathcal {E}(u(t)) ] \leq 4 A^2 \varepsilon^2 \text{.}
\end{align}
Our key step will then be to prove, under the above assumptions, the improved estimate
 \begin{align}\label{rgty6788889}
\sup_{0\leq t\leq T} [ E(u(t))+\mathcal {E}(u(t)) ] \leq A^2 \varepsilon^2 \text{.}
\end{align}

First, note that \eqref{small_quasilinear}, \eqref{eq.uv_vf}, and \eqref{vsystem99999} imply
\begin{equation}
\label{quasilinear_id} E (u(0)) \leq C_0 \varepsilon^2 \text{,}
\end{equation}
for some constant $C_0 > 0$.
Also, Lemma \ref{xu899quasilinear } and \eqref{ddddde} imply \eqref{le55mmmad2222_ass} and \eqref{xhjk999k99_ass}, with $\nu_1 \simeq \varepsilon$, hence the conclusions of Lemmas \ref{le55mmmad2222} and \ref{xhjk999k99} hold throughout.
In addition, \eqref{hyper1}, \eqref{hyper3}, \eqref{inviewof2}, and the above imply that if $\varepsilon_0$ is sufficiently small, then for any $y \in \mathbb{R}^n$,
\begin{align} \label{hyper10}
y^{T} ( I - \widetilde{A}_1 - \widetilde{A}_2 ) y \geq \frac{\lambda}{2} |y|^2 \text{,} \qquad y^{T} ( I - \widetilde{A}_1 ) y \geq \frac{\lambda}{2} |y|^2 \text{.}
\end{align}

\subsection{Low-Order Energy Estimates}

We first estimate the lower-order (i.e., first and second-order) energies.
The strategy is similar to the semilinear case, except we must also deal with top-order terms.

In view of \eqref{inviewof} and \eqref{inviewof2}, the system \eqref{vsystem99999} can be written as
\begin{align}\label{noytingthat333}
u_{\xi\eta}=\widetilde{A}_1u_{\xi\eta}+\widetilde{A}_2u_{\eta\eta}+\widetilde{A}_3u_{\xi\xi}+\widetilde{A}_3f''(\xi)+\widetilde{F} \text{.}
\end{align}
For any multi-index $a=(a_1,a_2)$, with $|a|=1$, we see that $Z^{a}u$ satisfies
\begin{align}\label{quasiwave8989}
Z^{a}u_{\xi\eta}=\widetilde{A}_1Z^{a}u_{\xi\eta}+\widetilde{A}_2Z^{a}u_{\eta\eta} +G_a \text{,}
\end{align}
where
\begin{align}
G_{a} &= \widetilde{A}_3Z^{a}u_{\xi\xi} + Z^{a} \widetilde{A}_1 u_{\xi\eta} + Z^{a} \widetilde{A}_2 u_{\eta\eta} + Z^{a} \widetilde{A}_3 u_{\xi\xi} \nonumber \\
&\qquad + Z^{a} \widetilde{A}_3 f''(\xi)+\widetilde{A}_3Z^{a}f''(\xi)+Z^{a}\widetilde{F} \text{.}
\end{align}
It then follows from Lemma \ref{le55mmmad2222} and the above that
\begin{align}\label{xuyao88899934}
|G_a| \leq C \bigg[ |f'(\xi)|+|f''(\xi)|+|f^{(3)}(\xi)|+\sum_{|c|\leq 1}|Z^{c}u_{\xi}| \bigg] \sum_{|b|\leq 1}|Z^{b}u_{\eta}|+|u_{\eta}||Z^{a}u_{\xi\xi}| \text{.}
\end{align}

Fix $t_0 \in [0, T]$.
We first estimate $\overline{E}_1(u(t_0)), \overline{E}_2(u(t_0)), \overline{\mathcal {E}}_1(u(t_0))$ and $\overline{\mathcal {E}}_2(u(t_0))$.
Multiplying \eqref{quasiwave8989} by $2\phi(\eta)Z^au^{T}_{\eta}$ and noting the symmetry of $\widetilde{A}_1$ and $\widetilde{A}_2$, we obtain
\begin{align}\label{low11}
\big(\phi(\eta)|Z^au_{\eta}|^2\big)_{\xi} &= \big(\phi(\eta)Z^au^{T}_{\eta}\widetilde{A}_1Z^au_{\eta}\big)_{\xi} - \phi(\eta)Z^au^{T}_{\eta}\partial_{\xi}\widetilde{A}_1Z^au_{\eta}+\big(\phi(\eta)Z^au^{T}_{\eta}\widetilde{A}_2Z^au_{\eta}\big)_{\eta} \nonumber \\
&\quad \qquad - \phi'(\eta)Z^au^{T}_{\eta}\widetilde{A}_2Z^au_{\eta} - \phi(\eta)Z^au^{T}_{\eta}\partial_{\eta}\widetilde{A}_2Z^au_{\eta} + 2\phi(\eta)Z^au^{T}_{\eta}G_a \text{.}
\end{align}
Fixing $\xi_0 \in \mathbb{R}$ and integrating \eqref{low11} over $S^{-}_{t_0,\xi_0}$ then yields
\begin{align}\label{com1}
&\int_{\Sigma^{-}_{t_0,\xi_0}}e_0(t,x)dx+\sqrt{2}\int_{C^{-}_{t_0,\xi_0}}\widetilde{e}_0(t,x) \nonumber \\
&\quad = \int_{\Sigma^{-}_{0,\xi_0}}e_0(0,x) dx +\int_{S^{-}_{t_0,\xi_0}}q_0(t,x) +2\int_{S^{-}_{t_0,\xi_0}}\phi(\eta)Z^au^{T}_{\eta}G_a \text{,}
\end{align}
where
\begin{align}
{e}_0 &= \phi(\eta)Z^au^{T}_{\eta}(I-\widetilde{A}_1-\widetilde{A}_2)Z^au_{\eta} \text{,} \\
\widetilde{e}_0 &= \phi(\eta)Z^au^{T}_{\eta}(I-\widetilde{A}_1)Z^au_{\eta} \text{,} \\
q_0 &= -\phi(\eta)Z^au^{T}_{\eta}\partial_{\xi}\widetilde{A}_1Z^au_{\eta}-\phi'(\eta)Z^au^{T}_{\eta}\widetilde{A}_2Z^au_{\eta} - \phi(\eta)Z^au^{T}_{\eta}\partial_{\eta}\widetilde{A}_2Z^au_{\eta} \text{.}
\end{align}

In view of \eqref{hyper10} and \eqref{weight1}, we see that
\begin{align}\label{com20}
e_0(t,x) \geq \frac{\lambda}{2}|\langle \eta\rangle^{1+\delta} Z^au_{\eta}|^2 \text{,} \qquad \widetilde{e}_0 (t,x) \geq \frac{\lambda}{2}|\langle \eta\rangle^{1+\delta} Z^au_{\eta}|^2 \text{.}
\end{align}
By Lemma \ref{le55mmmad2222}, \eqref{fgty788899}, and Lemma \ref{xu899quasilinear }, we can bound
\begin{align}\label{com3}
|q_0(t,x)|&\leq C \bigg[ |f'(\xi)|+|f''(\xi)|+\sum_{|c|\leq 1}|Z^{c}u_{\xi}| \bigg] |\langle \eta\rangle^{1+\delta} Z^au_{\eta}|^2\nonumber\\
&\leq C\langle \xi\rangle^{-1-\delta}|\langle \eta\rangle^{1+\delta} Z^au_{\eta}|^2 \text{.}
\end{align}
Moreover, by \eqref{xuyao88899934}, \eqref{fgty788899}, and Lemma \ref{xu899quasilinear },
\begin{align}\label{com4}
|\phi(\eta)Z^au^{T}_{\eta}G_a| &\leq C \bigg[ |f'(\xi)|+|f''(\xi)|+|f^{(3)}(\xi)|+\sum_{|c|\leq 1}|Z^{c}u_{\xi}| \bigg] \sum_{|b|\leq 1}|\langle \eta\rangle^{1+\delta} Z^{b}u_{\eta}|^2 \nonumber \\
&\qquad + C\langle \eta \rangle^{2+2\delta}|Z^{a}u_{\eta}||u_{\eta}||Z^{a}u_{\xi\xi}| \nonumber \\
&\leq C\langle \xi\rangle^{-1-\delta}\sum_{|b|\leq 1}|\langle \eta\rangle^{1+\delta} Z^{b}u_{\eta}|^2+C\langle \eta \rangle^{2+2\delta}|Z^{a}u_{\eta}||u_{\eta}||Z^{a}u_{\xi\xi}| \text{.}
\end{align}

Thus, the combination of \eqref{com1}, \eqref{com20}--\eqref{com4}, and \eqref{quasilinear_id} gives
\begin{align}
&\sum_{|a| = 1} \int_{\Sigma^{-}_{t_0,\xi_0}}|\langle \eta\rangle^{1+\delta} Z^au_{\eta}|^2dx + \sum_{|a| = 1}\int_{C^{-}_{t_0,\xi_0}}|\langle \eta\rangle^{1+\delta} Z^au_{\eta}|^2 \nonumber \\
&\quad \leq C\varepsilon^2 +C\sum_{|a|\leq 1}\big\|\langle \eta \rangle^{2+2\delta}|Z^{a}u_{\eta}||u_{\eta}||Z^{a}u_{\xi\xi}|\big\|_{L^1_{t,x}(S_{t_0})} \nonumber \\
&\quad \qquad + C \sum_{|a|\leq 1}\int_{-\infty}^{\xi_0}\langle \xi\rangle^{-1-\delta} \bigg( \int_{C^{-}_{t_0,\xi}} |\langle \eta \rangle^{1+\delta}Z^{a}u_{\eta}|^2 \bigg) d\xi \text{.}
\end{align}
A similar process also yields analogous estimates with $|a| = 0$.
As a result, we conclude
\begin{align}\label{com1zui}
&\sum_{|a|\leq 1} \int_{\Sigma^{-}_{t_0,\xi_0}}|\langle \eta\rangle^{1+\delta} Z^au_{\eta}|^2dx+\sum_{|a|\leq 1}\int_{C^{-}_{t_0,\xi_0}}|\langle \eta\rangle^{1+\delta} Z^au_{\eta}|^2 \nonumber \\
&\quad \leq C\varepsilon^2 +C\sum_{|a|\leq 1}\big\|\langle \eta \rangle^{2+2\delta}|Z^{a}u_{\eta}||u_{\eta}||Z^{a}u_{\xi\xi}|\big\|_{L^1_{t,x}(S_{t_0})} \nonumber \\
&\quad \qquad + C \sum_{|a|\leq 1}\int_{-\infty}^{\xi_0}\langle \xi\rangle^{-1-\delta} \bigg( \int_{C^{-}_{t_0,\xi}} |\langle \eta \rangle^{1+\delta}Z^{a}u_{\eta}|^2 \bigg) d\xi \text{.}
\end{align}

By Lemma \ref{xu899quasilinear }, we bound
\begin{align}\label{ziizui}
&\big\||\langle \eta \rangle^{2+2\delta}Z^{a}u_{\eta}||u_{\eta}||Z^{a}u_{\xi\xi}|\big\|_{L^1_{t,x}(S_{t_0})} \nonumber \\
&\quad \leq \|\langle \xi\rangle^{-\frac{1+\delta}{2}}\langle \eta \rangle^{1+\delta}Z^{a}u_{\eta}\|_{L^2_{t,x}(S_{t_0})} \|\langle \xi\rangle^{-\frac{1+\delta}{2}}\langle \eta \rangle^{1+\delta}u_{\eta}\|_{L^2_{t}L_{x}^{\infty}(S_{t_0})} \|\langle \xi\rangle^{{1+\delta}}Z^{a}u_{\xi\xi}\|_{L^{\infty}_{t}L^2_{x}(S_{t_0})} \nonumber \\
&\quad \leq C \sup_{ 0 \leq t \leq t_0 } E^{1/2} (u(t)) \cdot \mathcal{E}(u(t_0)) \text{.}
\end{align}
Then, by \eqref{ddddde}, \eqref{com1zui}, \eqref{ziizui}, and the Gronwall inequality, we have
\begin{align}\label{com1zuidffggg}
&\sum_{|a|\leq 1}\int_{\Sigma^{-}_{t_0,\xi_0}}|\langle \eta\rangle^{1+\delta} Z^au_{\eta}|^2dx+\sum_{|a|\leq 1}\int_{C^{-}_{t_0,\xi_0}}|\langle \eta\rangle^{1+\delta} Z^au_{\eta}|^2\leq C\varepsilon^2 +CA^3\varepsilon^3 \text{.}
\end{align}
Arguing analogously to the semilinear case (see below \eqref{23344455ffghhhfg}), we see that \eqref{com1zuidffggg} yields
\begin{align}\label{31388899}
\overline{E}_1(u(t_0))+\overline{E}_2(u(t_0))+\overline{\mathcal {E}}_1(u(t_0))+\overline{\mathcal {E}}_2(u(t_0))\leq C\varepsilon^2+CA^3\varepsilon^3 \text{.}
\end{align}

Next, we turn to bounds for $\widehat{E}_1(u(t_0))$, $\widehat{E}_2(u(t_0))$, $\widehat{\mathcal {E}}_1(u(t_0))$, and $\widehat{\mathcal {E}}_2(u(t_0))$.
We write
\begin{align}\label{quasiwave8989juui}
Z^{a}u_{\xi\eta}=\widetilde{A}_1Z^{a}u_{\xi\eta}+\overline{{A}}_2Z^{a}u_{\eta\eta} + \overline{G}_a \text{,}
\end{align}
where again $|a| = 1$, and where
\begin{align}\label{yibar}
\overline{G}_{a}=\overline{\overline{{A}}}_2Z^{a}u_{\eta\eta}+G_a.
\end{align}
Multiplying \eqref{quasiwave8989juui} by $2\psi(\eta)\phi(\xi)Z^au^{T}_{\xi}$ and noting the symmetry of $\widetilde{A}_1$ and $\overline{{A}}_2$ yields
\begin{align}\label{yujiy8899}
&\big( \psi(\eta)\phi(\xi)|Z^au_{\xi}|^2 \big)_{\eta}-\psi'(\eta)\phi(\xi)|Z^au_{\xi}|^2 \nonumber \\
&\quad = \big(\psi(\eta)\phi(\xi)Z^au^{T}_{\xi}\widetilde{A}_1 Z^au_{\xi}\big)_{\eta}-\psi'(\eta)\phi(\xi)Z^au^{T}_{\xi}\widetilde{A}_1  Z^au_{\xi}-\psi(\eta)\phi(\xi)Z^au^{T}_{\xi}\partial_{\eta}\widetilde{A}_1  Z^au_{\xi} \nonumber \\
&\quad \qquad + 2\big(\psi(\eta)\phi(\xi)Z^au^{T}_{\xi}\overline{{A}}_2  Z^au_{\eta}\big)_{\eta}-2\psi'(\eta)\phi(\xi)Z^au^{T}_{\xi}\overline{{A}}_2  Z^au_{\eta} \nonumber \\
&\quad \qquad - 2\psi(\eta)\phi(\xi)Z^au^{T}_{\xi}\partial_{\eta}\overline{{A}}_2  Z^au_{\eta}-\big(\psi(\eta)\phi(\xi)Z^au^{T}_{\eta}\overline{{A}}_2  Z^au_{\eta}\big)_{\xi} \nonumber \\
&\quad \qquad + \psi(\eta)\phi'(\xi)Z^au^{T}_{\eta}\overline{{A}}_2  Z^au_{\eta} + \psi(\eta)\phi(\xi)Z^au^{T}_{\eta}\partial_{\xi}\overline{{A}}_2 Z^au_{\eta} \nonumber \\
&\quad \qquad + 2\psi(\eta)\phi(\xi)Z^au^{T}_{\xi}\overline{G}_a \text{.}
\end{align}

Integrating \eqref{yujiy8899} over $S_{t_0}$, it follows that
\begin{align}\label{itfolowjk99}
&\int_{\Sigma_{t_0}}e_1(t_0,x)dx+\int_{S_{t_0}}p_1(t,x) \nonumber \\
&\quad = \int_{\Sigma_{0}}e_1(0,x)dx+\int_{\Sigma_{t_0}}\widetilde{e}_1(t_0,x)dx-\int_{\Sigma_{0}}\widetilde{e}_1(0,x)dx \nonumber \\
&\quad \qquad +\int_{S_{t_0}}q_1(t,x)+2\int_{S_{t_0}}\psi(\eta)\phi(\xi)Z^au^{T}_{\xi}\overline{G}_a \text{,}
 \end{align}
where
\begin{align}
e_1&=\psi(\eta)\phi(\xi)Z^au^{T}_{\xi}(I-\widetilde{A}_1) Z^au_{\xi} \text{,} \\
p_1&=-\psi'(\eta)\phi(\xi)Z^au^{T}_{\xi}(I-\widetilde{A}_1)  Z^au_{\xi} \text{,} \\
\widetilde{e}_1&= 2\psi(\eta)\phi(\xi)Z^au^{T}_{\xi}\overline{{A}}_2 Z^au_{\eta}-\psi(\eta)\phi(\xi)Z^au^{T}_{\eta}\overline{{A}}_2  Z^au_{\eta},
\\
q_1&=-\psi(\eta)\phi(\xi)Z^au^{T}_{\xi}\partial_{\eta}\widetilde{A}_1 Z^au_{\xi}-2\psi'(\eta)\phi(\xi)Z^au^{T}_{\xi}\overline{{A}}_2 Z^au_{\eta} \nonumber \\
&\qquad - 2\psi(\eta)\phi(\xi)Z^au^{T}_{\xi}\partial_{\eta}\overline{{A}}_2  Z^au_{\eta} +\psi(\eta)\phi'(\xi)Z^au^{T}_{\eta}\overline{{A}}_2 Z^au_{\eta} \nonumber \\
&\qquad + \psi(\eta)\phi(\xi)Z^au^{T}_{\eta}\partial_{\xi}\overline{{A}}_2  Z^au_{\eta} \text{.}
\end{align}

In view of \eqref{hyper10}, \eqref{weight1}, \eqref{weight3}, and \eqref{weight4}, we have
\begin{align}\label{comm2}
e_1(t,x)\geq \frac{\lambda}{2c}|\langle \xi\rangle^{1+\delta} Z^au_{\xi}|^2 \text{,} \qquad p_1(t,x)\geq \frac{\lambda}{2c}|\langle \eta\rangle^{-\frac{1+\delta}{2}} \langle \xi\rangle^{1+\delta} Z^au_{\xi}|^2 \text{.}
\end{align}
Moreover, by Lemma \ref{le55mmmad2222} and \eqref{fgty788899},
\begin{align}\label{eeeert55e}
|\widetilde{e}_1(t,x)| &\leq C|f'(\xi)|\langle \xi\rangle^{2+2\delta}|Z^au_{\xi}||Z^au_{\eta}|+C|f'(\xi)|\langle \xi\rangle^{2+2\delta}|Z^au_{\eta}|^2 \nonumber \\
&\leq C|f'(\xi)|\langle \xi\rangle^{1+\delta}|\langle\xi\rangle^{1+\delta}Z^au_{\xi}||\langle \eta \rangle^{1+\delta}Z^au_{\eta}|+C|f'(\xi)|\langle \xi\rangle^{2+2\delta}|\langle \eta \rangle^{1+\delta}Z^au_{\eta}|^2 \nonumber \\
&\leq C|\langle\xi\rangle^{1+\delta}Z^au_{\xi}||\langle \eta \rangle^{1+\delta}Z^au_{\eta}|+C|\langle \eta \rangle^{1+\delta}Z^au_{\eta}|^2 \text{,}
\end{align}
and
\begin{align}\label{qonuu455}
|q_1(t,x)| &\leq C|f'(\xi)|\langle \xi\rangle^{2+2\delta}|Z^au_{\xi}||Z^au_{\eta}|+C\big(|f'(\xi)|+|f''(\xi)|\big)\langle \xi\rangle^{2+2\delta}|Z^au_{\eta}|^2 \nonumber \\
&\qquad + C\sum_{|b|\leq 1}|Z^{b}u_{\eta}||\langle \xi\rangle^{1+\delta}Z^{a}u_{\xi}|^2 \nonumber \\
&\leq C|f'(\xi)|\langle \xi\rangle^{\frac{3}{2}({1+\delta})}|\langle \eta\rangle^{-\frac{1+\delta}{2}}\langle \xi \rangle^{1+\delta}Z^au_{\xi}||\langle \xi\rangle^{-\frac{1+\delta}{2}}\langle \eta \rangle^{1+\delta}Z^au_{\eta}| \nonumber \\
&\qquad + C\big(|f'(\xi)|+|f''(\xi)|\big)\langle \xi\rangle^{3(1+\delta)}|\langle \xi\rangle^{-\frac{1+\delta}{2}}\langle \eta \rangle^{1+\delta}Z^au_{\eta}|^2 \nonumber \\
&\qquad + C\sum_{|b|\leq 1}|\langle \eta\rangle^{1+\delta}Z^{b}u_{\eta}||\langle \eta\rangle^{-\frac{1+\delta}{2}}\langle \xi\rangle^{1+\delta}Z^{a}u_{\xi}|^2 \nonumber \\
&\leq C|\langle \eta\rangle^{-\frac{1+\delta}{2}}\langle \xi \rangle^{1+\delta}Z^au_{\xi}||\langle \xi\rangle^{-\frac{1+\delta}{2}}\langle \eta \rangle^{1+\delta}Z^au_{\eta}|+C|\langle \xi\rangle^{-\frac{1+\delta}{2}}\langle \eta \rangle^{1+\delta}Z^au_{\eta}|^2 \nonumber \\
&\qquad + C\sum_{|b|\leq 1}|\langle \eta\rangle^{1+\delta}Z^{b}u_{\eta}||\langle \eta\rangle^{-\frac{1+\delta}{2}}\langle \xi\rangle^{1+\delta}Z^{a}u_{\xi}|^2 \text{.}
\end{align}

From \eqref{xuyao88899934}, \eqref{yibar}, Lemma \ref{le55mmmad2222}, and \eqref{fgty788899}, we get
\begin{align}\label{xuyao98790}
&|\psi(\eta)\phi(\xi)Z^au^{T}_{\xi}\overline{G}_a| \nonumber \\
&\quad \leq C\big(|f'(\xi)|+|f''(\xi)|+|f^{(3)}(\xi)|\big)\langle \xi\rangle^{2+2\delta}|Z^au_{\xi}|\sum_{|b|\leq 1}|Z^bu_{\eta}| \nonumber \\
&\quad \qquad + C\langle \xi\rangle^{2+2\delta}\sum_{|b|\leq 1}|Z^bu_{\eta}|\sum_{|c|\leq 1}|Z^cu_{\xi}|^2 + C\langle \xi\rangle^{2+2\delta}|Z^au_{\xi}||u_{\eta}||Z^au_{\xi\xi}| \nonumber \\
&\quad \qquad + C\langle \xi\rangle^{2+2\delta}|Z^au_{\xi}||u_{\xi}||Z^au_{\eta\eta}| \nonumber \\
&\quad \leq C|\langle \eta\rangle^{-\frac{1+\delta}{2}}\langle \xi \rangle^{1+\delta}Z^au_{\xi}|\sum_{|b|\leq 1}|\langle \xi\rangle^{-\frac{1+\delta}{2}}\langle \eta \rangle^{1+\delta}Z^bu_{\eta}| \nonumber \\
&\quad \qquad + C\sum_{|b|\leq 2}|\langle\eta\rangle^{1+\delta}Z^bu_{\eta}|\sum_{|c|\leq 2}|\langle \eta\rangle^{-\frac{1+\delta}{2}}\langle \xi \rangle^{1+\delta}Z^cu_{\xi}|\sum_{|d|\leq 1}|\langle \eta\rangle^{-\frac{1+\delta}{2}}\langle \xi \rangle^{1+\delta}Z^du_{\xi}| \text{.}
\end{align}
Thus, it follows from \eqref{itfolowjk99} and \eqref{comm2}--\eqref{xuyao98790} that for any multi-index $a$ with $|a| = 1$,
\begin{align}\label{fuzaio89}
&\|\langle \xi\rangle^{1+\delta} Z^au_{\xi}\|^2_{L^2_{x}(\Sigma_{t_0})}+\|\langle \eta\rangle^{-\frac{1+\delta}{2}} \langle \xi\rangle^{1+\delta} Z^au_{\xi}\|^2_{L^2_{t,x}(S_{t_0})} \nonumber \\
&\quad \leq C\varepsilon^2+C\|\langle\xi\rangle^{1+\delta}Z^au_{\xi}\|_{L^2_{x}(\Sigma_{t_0})} \|\langle \eta \rangle^{1+\delta}Z^au_{\eta}|\|_{L^2_{x}(\Sigma_{t_0})}+C\|\langle \eta \rangle^{1+\delta}Z^au_{\eta}\|^2_{L^2_{x}(\Sigma_{t_0})} \nonumber \\
&\quad \qquad + C\|\langle \eta\rangle^{-\frac{1+\delta}{2}}\langle \xi \rangle^{1+\delta}Z^au_{\xi}\|_{L^2_{t,x}(S_{t_0})}\sum_{|b|\leq 1}\|\langle \xi\rangle^{-\frac{1+\delta}{2}}\langle \eta \rangle^{1+\delta}Z^bu_{\eta}\|_{L^2_{t,x}(S_{t_0})} \nonumber \\
&\quad \qquad + C\|\langle \xi\rangle^{-\frac{1+\delta}{2}}\langle \eta \rangle^{1+\delta}Z^au_{\eta}\|^2_{L^2_{t,x}(S_{t_0})} \nonumber \\
&\quad \qquad + C\sum_{|b|\leq 2}\|\langle\eta\rangle^{1+\delta}Z^bu_{\eta}\|_{L^{\infty}_{t}L^{2}_{x}(S_{t_0})}\sum_{|c|\leq 2}\|\langle \eta\rangle^{-\frac{1+\delta}{2}}\langle \xi \rangle^{1+\delta}Z^cu_{\xi}\|_{L^2_{t,x}(S_{t_0})} \nonumber \\
&\quad \qquad \qquad \cdot \sum_{|d|\leq 1}\|\langle \eta\rangle^{-\frac{1+\delta}{2}}\langle \xi \rangle^{1+\delta}Z^du_{\xi}\|_{L^{2}_{t}L^{\infty}_{x}(S_{t_0})} \nonumber \\
&\quad \leq C\varepsilon^2+\frac{1}{100}\|\langle \xi\rangle^{1+\delta} Z^au_{\xi}\|^2_{L^2_{x}(\Sigma_{t_0})}+\frac{1}{100}\|\langle \eta\rangle^{-\frac{1+\delta}{2}} \langle \xi\rangle^{1+\delta} Z^au_{\xi}\|^2_{L^2_{t,x}(S_{t_0})} \nonumber \\
&\quad \qquad + C [ \overline{E}_1(u(t_0))+\overline{E}_2(u(t_0))+\overline{\mathcal {E}}_1(u(t_0))+\overline{\mathcal {E}}_2(u(t_0)) ] \nonumber \\
&\quad \qquad + C \sup_{ 0 \leq t \leq t_0 } E^{1/2}(u(t)) \cdot \mathcal {E}(u(t_0)) \text{.}
\end{align}

A similar (but easier) estimate yields the same bound \eqref{fuzaio89} also for $|a| = 0$.
Therefore, combining \eqref{31388899}, \eqref{fuzaio89}, and the above, we obtain the estimate
\begin{align}\label{xuyai90656}
\widehat{E}_1(u(t_0))+\widehat{E}_2(u(t_0))+\widehat{\mathcal {E}}_1(u(t_0))+\widehat{\mathcal {E}}_2(u(t_0))\leq C\varepsilon^2+CA^3\varepsilon^3 \text{.}
\end{align}
Finally, \eqref{31388899} and \eqref{xuyai90656} together yield
\begin{align}\label{duix998709}
\sup_{0 \leq t \leq T} [ E_1(u(t))+E_2(u(t))+\mathcal {E}_1(u(t))+\mathcal {E}_2(u(t)) ] &\leq C\varepsilon^2+CA^3\varepsilon^3 \text{.}
\end{align}

\subsection{Top-Order Energy Estimates}

We next turn to the top-order (i.e., third-order) energy estimates, which are also the key and most delicate parts in the analysis.
As usual, we fix $t_0 \in [ 0, T ]$ and $\xi_0 \in \mathbb{R}$.

\subsubsection{Estimates for $\overline{E}_3$ and $\overline{\mathcal{E}}_3$}

In contrast to the semilinear setting and the low-order estimates in the previous subsection, here we must estimate $\overline{{E}}_3(u(t_0))$ and $\overline{{{\mathcal{E}}}}_3(u(t_0))$ separately.

Let $a = (a_1, a_2)$ be a multi-index, with $|a|=2$ and $a_2\neq 0$.
Then, by \eqref{noytingthat333},
\begin{align}\label{gajie8989}
Z^{a}u_{\xi\eta}=\widetilde{A}_1Z^{a}u_{\xi\eta}+\widetilde{A}_2Z^{a}u_{\eta\eta}+\widetilde{A}_3Z^{a}u_{\xi\xi}+H_a \text{,}
\end{align}
where
\begin{align}
H_a &= \sum_{\substack{b+c=a\\c\neq a}}\lambda_{bc}\big(Z^{b}\widetilde{A}_1Z^{c}u_{\xi\eta}+Z^{b}\widetilde{A}_2Z^{c}u_{\eta\eta}+Z^{b}\widetilde{A}_3Z^{c}u_{\xi\xi}\big) \nonumber \\
&\qquad + \sum_{{b+c=a}}\lambda_{bc}Z^{b}\widetilde{A}_3Z^{c}f''(\xi) +Z^{a}\widetilde{F} \text{,}
\end{align}
and where the $\lambda_{bc}$'s are integer constants.
By Lemmas \ref{le55mmmad2222} and \ref{xhjk999k99},
\begin{align}\label{xuyao9jllj566}
|H_a| &\leq C [ |f'(\xi)|+|f''(\xi)|+|f^{(3)}(\xi)| ] \bigg[ \sum_{\substack{|b|=2\\b_2\neq 0}}|Z^{b}u_{\eta}|+\sum_{|b|\leq 1}|Z^{b}u_{\eta}| \bigg] \nonumber \\
&\qquad + C\sum_{|b|\leq 1}|Z^{b}u_{\eta}|\sum_{|c|\leq 2}|Z^{b}u_{\xi}|+C\sum_{|b|\leq 2}|Z^{b}u_{\eta}|\sum_{|c|\leq 1}|Z^{b}u_{\xi}| \text{.}
\end{align}

Multiplying \eqref{gajie8989} by $2\phi(\eta)Z^au^{T}_{\eta}$ and recalling that $\widetilde{A}_1$, $\widetilde{A}_2$, $\widetilde{A}_3$ are symmetric, we obtain
\begin{align}\label{highgh7811}
\big(\phi(\eta)|Z^au_{\eta}|^2\big)_{\xi} &= \big(\phi(\eta)Z^au^{T}_{\eta}\widetilde{A}_1Z^au_{\eta}\big)_{\xi} - \phi(\eta)Z^au^{T}_{\eta}\partial_{\xi}\widetilde{A}_1Z^au_{\eta} + \big(\phi(\eta)Z^au^{T}_{\eta}\widetilde{A}_2Z^au_{\eta}\big)_{\eta} \nonumber \\
&\qquad - \phi'(\eta)Z^au^{T}_{\eta}\widetilde{A}_2Z^au_{\eta} - \phi(\eta)Z^au^{T}_{\eta}\partial_{\eta}\widetilde{A}_2Z^au_{\eta} + 2\big(\phi(\eta)Z^au^{T}_{\eta}\widetilde{A}_3Z^au_{\xi}\big)_{\xi} \nonumber \\
&\qquad - 2\phi(\eta)Z^au^{T}_{\eta}\partial_{\xi}\widetilde{A}_3Z^au_{\xi} - \big(\phi(\eta)Z^au^{T}_{\xi}\widetilde{A}_3Z^au_{\xi}\big)_{\eta} + \phi'(\eta)Z^au^{T}_{\xi}\widetilde{A}_3Z^au_{\xi} \nonumber \\
&\qquad + \phi(\eta)Z^au^{T}_{\xi}\partial_{\eta}\widetilde{A}_3Z^au_{\xi} + 2\phi(\eta)Z^au^{T}_{\eta}H_a \text{.}
\end{align}
Integrating both sides of \eqref{highgh7811} over $S^{-}_{t_0,\xi_0}$, we then have
\begin{align}\label{com88991}
&\int_{\Sigma^{-}_{t_0,\xi_0}}e_2(t,x)dx + \sqrt{2}\int_{C^{-}_{t_0,\xi_0}}\overline{e}_2(t,x) \nonumber \\
&\quad = \int_{\Sigma^{-}_{0,\xi_0}}e_2(0,x) dx + \int_{\Sigma^{-}_{t_0,\xi_0}}\overline{\overline{e}}_2(t,x)dx - \int_{\Sigma^{-}_{0,\xi_0}}\overline{\overline{e}}_2(0,x) dx + \sqrt{2}\int_{C^{-}_{t_0,\xi_0}}\widetilde{e}_2(t,x) \nonumber \\
&\quad \qquad + \int_{S^{-}_{t_0,\xi_0}}q_2(t,x) + 2\int_{S^{-}_{t_0,\xi_0}}\phi(\eta)Z^au^{T}_{\eta}H_a \text{,}
\end{align}
where
\begin{align}
e_2 &= \phi(\eta)Z^au^{T}_{\eta}(I-\widetilde{A}_1-\widetilde{A}_2)Z^au_{\eta} \text{,} \\
\overline{e}_2 &= \phi(\eta)Z^au^{T}_{\eta}(I-\widetilde{A}_1)Z^au_{\eta} \text{,} \\
\overline{\overline{e}}_2 &= 2\phi(\eta)Z^au^{T}_{\eta}\widetilde{A}_3Z^au_{\xi} - \phi(\eta)Z^au^{T}_{\xi}\widetilde{A}_3Z^au_{\xi} \text{,} \\
\widetilde{e}_2 &= 2 \phi(\eta)Z^au^{T}_{\eta}\widetilde{A}_3Z^au_{\xi} \text{,} \\
q_2 &= - \phi(\eta)Z^au^{T}_{\eta}\partial_{\xi}\widetilde{A}_1Z^au_{\eta}-\phi'(\eta)Z^au^{T}_{\eta}\widetilde{A}_2Z^au_{\eta} - \phi(\eta)Z^au^{T}_{\eta}\partial_{\eta}\widetilde{A}_2Z^au_{\eta} \nonumber \\
&\qquad - 2\phi(\eta)Z^au^{T}_{\eta}\partial_{\xi}\widetilde{A}_3Z^au_{\xi} + \phi'(\eta)Z^au^{T}_{\xi}\widetilde{A}_3Z^au_{\xi}+\phi(\eta)Z^au^{T}_{\xi}\partial_{\eta}\widetilde{A}_3Z^au_{\xi} \text{.}
\end{align}

From \eqref{hyper10}, we immediately see that
\begin{align}\label{cjjjjom2}
e_2(t,x) \geq \frac{\lambda}{2}|\langle \eta\rangle^{1+\delta} Z^au_{\eta}|^2 \text{,} \qquad \overline{e}_2(t,x)\geq \frac{\lambda}{2}|\langle \eta\rangle^{1+\delta} Z^au_{\eta}|^2 \text{.}
\end{align}
Moreover, by Lemma \ref{le55mmmad2222},
\begin{align}\label{eeejccjjjert55e999}
&|\widetilde{e}_2(t,x)|\leq C|\langle \eta\rangle^{1+\delta}u_{\eta}||\langle \eta\rangle^{1+\delta}Z^au_{\eta}||Z^au_{\xi}| \text{,}
\end{align}
and
\begin{align}\label{eeejccjjjert55e}
|\overline{\overline{{e}}}_2(t,x)| &\leq C\langle \eta\rangle^{2+2\delta}|u_{\eta}||Z^au_{\xi}|^2 + C |\langle \eta\rangle^{1+\delta}u_{\eta}||\langle \eta\rangle^{1+\delta}Z^au_{\eta}||Z^au_{\xi}| \nonumber \\
&\leq C |\langle \eta\rangle^{1+\delta}u_{\eta}|\sum_{|b|=2}|\langle \eta\rangle^{1+\delta}Z^bu_{\eta}||Z^au_{\xi}| \text{.}
\end{align}
Here, we also made use the simple observation that since $a_2\neq 0$,
\begin{align}\label{fact}
| Z^{a}u_{\xi} | = |\partial_{\xi}^{a_1}\partial_{\eta}^{a_2}u_{\xi}|\leq \sum_{|b|=2}|Z^{b}u_{\eta}| \text{.}
\end{align}
By Lemma \ref{le55mmmad2222}, \eqref{fgty788899}, \eqref{fact}, and Sobolev embedding,
\begin{align}\label{eeejffffccjjjert55e}
|q_2(t,x)| &\leq C\big(|f'(\xi)|+|f''(\xi)|\big) |\langle \eta\rangle^{1+\delta}Z^{a}u_{\eta}|^2+C|\langle \eta\rangle^{1+\delta}Z^{a}u_{\eta}|^2(|u_{\xi}|+|u_{\xi\xi}|+|u_{\xi\eta}|) \nonumber \\
&\qquad + C \langle \eta\rangle^{2+2\delta}|Z^{a}u_{\eta}||Z^{a}u_{\xi}|(|u_{\eta}|+|u_{\xi\eta}|)+C\langle \eta\rangle^{2+2\delta}|Z^{a}u_{\xi}|^2(|u_{\eta}|+|u_{\eta\eta}|) \nonumber \\
&\leq C \langle \xi\rangle^{-1-\delta}|\langle \eta\rangle^{1+\delta}Z^{a}u_{\eta}|^2 \nonumber \\
&\qquad + C|\langle\xi\rangle^{1+\delta}Z^{a}u_{\xi}|\sum_{|b|= 2}|\langle\xi\rangle^{-\frac{1+\delta}{2}}\langle \eta\rangle^{1+\delta}Z^{b}u_{\eta}|\sum_{|c|\leq 1}|\langle\xi\rangle^{-\frac{1+\delta}{2}}\langle \eta\rangle^{1+\delta}Z^{c}u_{\eta}| \text{.}
\end{align}
Moreover, it follows from \eqref{xuyao9jllj566} and \eqref{fgty788899} that
\begin{align}\label{esuiyujjanjjert55e}
&\sum_{\substack{|a|=2\\a_2\neq 0}}|\phi(\eta)Z^au^{T}_{\eta}H_a| \nonumber \\
&\quad \leq C \langle \xi\rangle^{-1-\delta}\sum_{\substack{|a|=2\\a_2\neq 0}}|\langle \eta\rangle^{1+\delta}Z^au_{\eta}|^2+C \sum_{|b|\leq 1} |\langle \xi\rangle^{-\frac{1+\delta}{2}}\langle \eta\rangle^{1+\delta}Z^bu_{\eta}|^2 \nonumber \\
&\quad \qquad + C \sum_{|a|\leq 2}|\langle\xi\rangle^{-\frac{1+\delta}{2}}\langle \eta\rangle^{1+\delta}Z^{a}u_{\eta}|\sum_{|b|\leq 1}|\langle\xi\rangle^{-\frac{1+\delta}{2}}\langle \eta\rangle^{1+\delta}Z^{b}u_{\eta}|\sum_{|c|\leq 2}|\langle\xi\rangle^{1+\delta}Z^{c}u_{\xi}| \nonumber \\
&\quad \qquad + C \sum_{|b|\leq 2}|\langle\xi\rangle^{-\frac{1+\delta}{2}}\langle \eta\rangle^{1+\delta}Z^{b}u_{\eta}|^2\sum_{|c|\leq 1}|\langle\xi\rangle^{1+\delta}Z^{c}u_{\xi}| \text{.}
\end{align}

Now, from \eqref{com88991} and \eqref{cjjjjom2}, we obtain
\begin{align}\label{cfffom88991}
&\sum_{\substack{|a|=2\\a_2\neq 0}} \int_{\Sigma^{-}_{t_0,\xi_0}}|\langle \eta\rangle^{1+\delta} Z^au_{\eta}|^2dx+\sum_{\substack{|a|=2\\a_2\neq 0}}\int_{C^{-}_{t_0,\xi_0}}|\langle \eta\rangle^{1+\delta} Z^au_{\eta}|^2 \nonumber \\
&\quad \leq C\varepsilon^2+C\sum_{\substack{|a|=2\\a_2\neq 0}} \|\overline{\overline{{e}}}_2\|_{L^1_{x}(\Sigma_{t_0})}+C\sum_{\substack{|a|=2\\a_2\neq 0}}\int_{C^{-}_{t_0,\xi_0}}|\widetilde{e}_2(t,x)| \nonumber \\
&\quad \qquad + C\sum_{\substack{|a|=2\\a_2\neq 0}}\int_{S^{-}_{t_0,\xi_0}}|q_2(t,x)| +C\sum_{\substack{|a|=2\\a_2\neq 0}}\int_{S^{-}_{t_0,\xi_0}}|\phi(\eta)Z^au^{T}_{\eta}H_a| \text{.}
\end{align}
By \eqref{eeejccjjjert55e} and Lemma \ref{xu899quasilinear },
\begin{align}\label{eexuyao88ejccjjjert55e}
\|\overline{\overline{{e}}}_2\|_{L^1_{x}(\Sigma_{t_0})} &\leq C\|\langle \eta\rangle^{1+\delta}u_{\eta}\|_{L^{\infty}_{x}(\Sigma_{t_0})}\|Z^au_{\xi}\|_{L^2_{x}(\Sigma_{t_0})} \sum_{|b|=2}\|\langle \eta\rangle^{1+\delta}Z^bu_{\eta}\|_{L^2_{x}(\Sigma_{t_0})} \nonumber \\
&\leq C E^{3/2}(u(t_0)) \text{.}
\end{align}
In addition, from \eqref{eeejffffccjjjert55e}, \eqref{esuiyujjanjjert55e}, \eqref{key1} and Lemma \ref{xu899quasilinear }, we obtain
\begin{align}\label{jkui89ghk}
&\sum_{\substack{|a|=2\\a_2\neq 0}}\int_{S^{-}_{t_0,\xi_0}}|q_2(t,x)| +\sum_{\substack{|a|=2\\a_2\neq 0}}\int_{S^{-}_{t_0,\xi_0}}|\phi(\eta)Z^au^{T}_{\eta}H_a| \nonumber \\
&\quad \leq C \sum_{\substack{|a|=2\\a_2\neq 0}} \int_{-\infty}^{\xi_0}\langle \xi\rangle^{-1-\delta} \bigg( \int_{C^{-}_{t_0,\xi}}|\langle \eta\rangle^{1+\delta}Z^au_{\eta}|^2 \bigg) d\xi + C \sum_{|b|\leq 1} \|\langle \xi\rangle^{-\frac{1+\delta}{2}}\langle \eta\rangle^{1+\delta}Z^bu_{\eta}\|_{L_{t,x}^2({S_{t_0}})}^2 \nonumber \\
&\quad \qquad + C\sum_{|a|\leq 2}\|\langle\xi\rangle^{-\frac{1+\delta}{2}}\langle \eta\rangle^{1+\delta}Z^{a}u_{\eta}\|_{L_{t,x}^2({S_{t_0}})}\sum_{|b|\leq 1}\|\langle\xi\rangle^{-\frac{1+\delta}{2}}\langle \eta\rangle^{1+\delta}Z^{b}u_{\eta}\|_{L_{t}^2L_{x}^{\infty}({S_{t_0}})} \nonumber \\
&\quad \qquad \qquad \cdot \sum_{|c|\leq 2}\|\langle\xi\rangle^{1+\delta}Z^{c}u_{\xi}\|_{L_{t}^{\infty}L_{x}^{2}({S_{t_0}})} \nonumber \\
&\quad \qquad + C\sum_{|b|\leq 2}\|\langle\xi\rangle^{-\frac{1+\delta}{2}}\langle \eta\rangle^{1+\delta}Z^{b}u_{\eta}\|_{L_{t,x}^2({S_{t_0}})}^2\sum_{|c|\leq 1}\|\langle\xi\rangle^{1+\delta}Z^{c}u_{\xi}\|_{L_{t,x}^{\infty}({S_{t_0}})} \nonumber \\
&\quad \leq C \sum_{\substack{|a|=2\\a_2\neq 0}} \int_{-\infty}^{\xi_0}\langle \xi\rangle^{-1-\delta} \bigg( \int_{C^{-}_{t_0,\xi}}|\langle \eta\rangle^{1+\delta}Z^au_{\eta}|^2 \bigg) d\xi + C [ \mathcal {E}_1(u(t_0))+\mathcal {E}_2(u(t_0)) ] \nonumber \\
&\quad \qquad + C \sup_{0\leq t \leq t_0} E^{1/2}(u(t)) \cdot \mathcal {E}(u(t_0)) \text{.}
\end{align}

Using \eqref{cfffom88991}, \eqref{eexuyao88ejccjjjert55e}, \eqref{jkui89ghk}, and \eqref{duix998709}, we bound
\begin{align}\label{cfffom899ff98991}
&\sum_{\substack{|a|=2\\a_2\neq 0}}\int_{\Sigma^{-}_{t_0,\xi_0}}|\langle \eta\rangle^{1+\delta} Z^au_{\eta}|^2dx+\sum_{\substack{|a|=2\\a_2\neq 0}}\int_{C^{-}_{t_0,\xi_0}}|\langle \eta\rangle^{1+\delta} Z^au_{\eta}|^2 \nonumber \\
&\quad \leq C\varepsilon^2+CA^3\varepsilon^3 + C \sum_{\substack{|a|=2\\a_2\neq 0}}\int_{C^{-}_{t_0,\xi_0}}|\widetilde{e}_2(t,x)| \nonumber \\
&\quad \qquad + C \sum_{\substack{|a|=2\\a_2\neq 0}}\int_{-\infty}^{\xi_0}\langle \xi\rangle^{-1-\delta} \bigg( \int_{C^{-}_{t_0,\xi}}|\langle \eta\rangle^{1+\delta}Z^au_{\eta}|^2 \bigg) d\xi \text{.}
\end{align}
Applying the Gronwall inequality from Lemma \ref{gronwall}, along with \eqref{key2}, yields that
\begin{align}\label{cfffff98991}
&\sum_{\substack{|a|=2\\a_2\neq 0}}\int_{\Sigma^{-}_{t_0,\xi_0}}|\langle \eta\rangle^{1+\delta} Z^au_{\eta}|^2dx+\sum_{\substack{|a|=2\\a_2\neq 0}}\int_{C^{-}_{t_0,\xi_0}}|\langle \eta\rangle^{1+\delta} Z^au_{\eta}|^2 \nonumber \\
&\quad \leq C\varepsilon^2 + CA^3\varepsilon^3 + C\sum_{\substack{|a|=2\\a_2\neq 0}}\int_{C^{-}_{t_0,\xi_0}}|\widetilde{e}_2(t,x)|+C\sum_{\substack{|a|=2\\a_2\neq 0}}\int_{-\infty}^{\xi_0}\langle \xi\rangle^{-1-\delta} \bigg( \int_{C^{-}_{t_0,\xi}}|\widetilde{e}_2(t,x)| \bigg) d\xi \nonumber \\
&\quad \leq C\varepsilon^2+CA^3\varepsilon^3+C\sum_{\substack{|a|=2\\a_2\neq 0}}\|\langle \xi\rangle^{-1-\delta}\widetilde{e}_2\|_{L^1_{t,x}(S_{t_0})} + C\sum_{\substack{|a|=2\\a_2\neq 0}}\int_{C^{-}_{t_0,\xi_0}}|\widetilde{e}_2(t,x)| \text{.}
\end{align}
Multiplying \eqref{cfffff98991} by $\langle \xi_0\rangle^{-1-\delta}$ and then integrating respect to $\xi_0$ over $\mathbb{R}$ yields
\begin{align}\label{cfffff98991yyyy}
&\sum_{\substack{|a|=2\\a_2\neq 0}}\int_{\mathbb{R}}\langle \xi_0\rangle^{-1-\delta} \bigg( \int_{C^{-}_{t_0,\xi_0}}|\langle \eta\rangle^{1+\delta} Z^au_{\eta}|^2 \bigg) d\xi_0 \nonumber \\
&\quad \leq C\varepsilon^2+CA^3\varepsilon^3+C\sum_{\substack{|a|=2\\a_2\neq 0}}\|\langle \xi\rangle^{-1-\delta}\widetilde{e}_2\|_{L^1_{t,x}(S_{t_0})} \nonumber \\
&\quad \qquad + C\sum_{\substack{|a|=2\\a_2\neq 0}}\int_{\mathbb{R}}\langle \xi_0\rangle^{-1-\delta} \bigg( \int_{C^{-}_{t_0,\xi_0}}|\widetilde{e}_2(t,x)| \bigg) d\xi_0 \text{.}
\end{align}

From \eqref{key2} and \eqref{cfffff98991yyyy}, we obtain
\begin{align}\label{cfffff9kk8991}
\overline{\mathcal{E}}_3 (u(t_0)) &= \sum_{\substack{|a|=2\\a_2\neq 0}}\|\langle \xi\rangle^{-\frac{1+\delta}{2}}\langle \eta\rangle^{1+\delta} Z^au_{\eta}\|_{L^2_{t,x}(S_{t_0})}^2 \nonumber \\
&\leq C\varepsilon^2+CA^3\varepsilon^3 + C\sum_{\substack{|a|=2\\a_2\neq 0}}\|\langle \xi\rangle^{-1-\delta}\widetilde{e}_2\|_{L^1_{t,x}(S_{t_0})} \text{.}
\end{align}
By \eqref{eeejccjjjert55e999} and Lemma \ref{xu899quasilinear }, we have
\begin{align}\label{eeejccjjjddddddert55e}
&\|\langle \xi\rangle^{-1-\delta}\widetilde{e}_2(t,x)\|_{L^1_{t,x}(S_{t_0})} \nonumber \\
&\quad \leq C\|\langle \xi\rangle^{-\frac{1+\delta}{2}}\langle \eta\rangle^{1+\delta}u_{\eta}\|_{L^2_{t}L^{\infty}_{x}(S_{t_0})}\|\langle \xi\rangle^{-\frac{1+\delta}{2}}\langle \eta\rangle^{1+\delta}Z^au_{\eta}\|_{L^2_{t,x}(S_{t_0})}\|\langle \xi\rangle^{1+\delta}Z^au_{\xi}\|_{L^{\infty}_{t}L^{2}_{x}(S_{t_0})} \nonumber \\
&\quad \leq C \sup_{0 \leq t \leq t_0} E^{1/2}(u(t)) \cdot \mathcal{E} (u(t_0)) \text{.}
\end{align}
Combining \eqref{cfffff9kk8991} and \eqref{eeejccjjjddddddert55e} gives
\begin{align}\label{xui8962227yh}
\overline{\mathcal{E}}_3(u(t_0)) \leq C\varepsilon^2+CA^3\varepsilon^3 \text{.}
\end{align}

It remains to estimate $\overline{{E}}_3(u(t_0))$.
For this, we integrate \eqref{highgh7811} over $S_{t_0}$:
\begin{align}\label{com8uuu89919900}
\int_{\Sigma_{t_0}} e_2(t,x)dx &= \int_{\Sigma_{0}} e_2(0,x) dx + \int_{\Sigma_{t_0}}\overline{\overline{{e}}}_2(t,x)dx-\int_{\Sigma_{0}}\overline{\overline{{e}}}_2(0,x) dx \nonumber \\
&\qquad + \int_{S_{t_0}}q_2(t,x) +2\int_{S_{t_0}}\phi(\eta)Z^au^{T}_{\eta}H_a \text{.}
\end{align}
It follows from \eqref{com8uuu89919900} and \eqref{cjjjjom2} that
\begin{align}\label{cfffoyinguangm88991}
\overline{{E}}_3(u(t_0)) &= \sum_{\substack{|a|=2\\a_2\neq 0}}\|\langle \eta\rangle^{1+\delta} Z^au_{\eta}\|_{L^2_{x}(\Sigma_{t_0})} \nonumber \\
&\leq C \varepsilon^2+\sum_{\substack{|a|=2\\a_2\neq 0}} \|\overline{\overline{{e}}}_2\|_{L^1_{x}(\Sigma_{t_0})} + C\sum_{\substack{|a|=2\\a_2\neq 0}}\|q_2\|_{L^1_{t,x}(S_{t_0})} \nonumber \\
&\qquad + C\sum_{\substack{|a|=2\\a_2\neq 0}}\|\phi(\eta)Z^au^{T}_{\eta}H_a\|_{L^1_{t,x}(S_{t_0})} \text{.}
\end{align}
From \eqref{eeejffffccjjjert55e}, \eqref{esuiyujjanjjert55e}, and \eqref{xui8962227yh}, we then obtain
\begin{align}\label{jkui89ghkkiik}
&\sum_{\substack{|a|=2\\a_2\neq 0}}\|q_2\|_{L^1_{t,x}(S_{t_0})} + \sum_{\substack{|a|=2\\a_2\neq 0}}\|\phi(\eta)Z^au^{T}_{\eta}H_a\|_{L^1_{t,x}(S_{t_0})} \nonumber \\
&\quad \leq C \sum_{\substack{|a|=2\\a_2\neq 0}} \|\langle \xi\rangle^{-\frac{1+\delta}{2}}\langle \eta\rangle^{1+\delta}Z^bu_{\eta}\|_{L_{t,x}^2({S_{t_0}})}^2+C \sum_{|b|\leq 1} \|\langle \xi\rangle^{-\frac{1+\delta}{2}}\langle \eta\rangle^{1+\delta}Z^bu_{\eta}\|_{L_{t,x}^2({S_{t_0}})}^2 \nonumber \\
&\quad \qquad + C\sum_{|a|\leq 2}\|\langle\xi\rangle^{-\frac{1+\delta}{2}}\langle \eta\rangle^{1+\delta}Z^{a}u_{\eta}\|_{L_{t,x}^2({S_{t_0}})}\sum_{|b|\leq 1}\|\langle\xi\rangle^{-\frac{1+\delta}{2}}\langle \eta\rangle^{1+\delta}Z^{b}u_{\eta}\|_{L_{t}^2L_{x}^{\infty}({S_{t_0}})} \nonumber \\
&\quad \qquad \qquad \cdot \sum_{|c|\leq 2}\|\langle\xi\rangle^{1+\delta}Z^{c}u_{\xi}\|_{L_{t}^{\infty}L_{x}^{2}({S_{t_0}})} \nonumber \\
&\quad \qquad + C \sum_{|b|\leq 2}\|\langle\xi\rangle^{-\frac{1+\delta}{2}}\langle \eta\rangle^{1+\delta}Z^{b}u_{\eta}\|_{L_{t,x}^2({S_{t_0}})}^2\sum_{|c|\leq 1}\|\langle\xi\rangle^{1+\delta}Z^{c}u_{\xi}\|_{L_{t,x}^{\infty}({S_{t_0}})} \nonumber \\
&\quad \leq C \overline{\mathcal{E}}_3(u(t_0)) +C [ \mathcal {E}_1(u(t_0))+ \mathcal {E}_2(u(t_0)) ] + C \sup_{0 \leq t \leq t_0} E^{1/2}(u(t)) \cdot \mathcal{E}(u(t_0)) \nonumber \\
&\quad \leq C\varepsilon^2+CA^3\varepsilon^3 \text{.}
\end{align}

Finally, from \eqref{cfffoyinguangm88991}, \eqref{eexuyao88ejccjjjert55e}, and \eqref{jkui89ghkkiik}, we conclude
\begin{align}\label{Pthcom234}
\overline{E}_3(u(t_0))\leq C\varepsilon^2+CA^3\varepsilon^3 \text{.}
\end{align}
Combining \eqref{xui8962227yh} and \eqref{Pthcom234} then yields the estimate
\begin{align}\label{Pthcom234888999}
\overline{E}_3(u(t_0)) + \overline{\mathcal{E}}_3(u(t_0))\leq C\varepsilon^2+CA^3\varepsilon^3 \text{.}
\end{align}

\subsubsection{Estimates of $\widetilde{E}_3$ and $\widetilde{\mathcal{E}}_3$}

We now turn to $\widetilde{E}_3 (u(t_0))$ and $\widetilde{\mathcal{E}}_3 (u(t_0))$, which can not be treated by the usual integration by parts argument.
Similarly to \cite{MR4098041}, we will instead use the system \eqref{noytingthat333} directly, while noting that $\overline{{E}}_3 (u(t_0))$ and $\overline{{{\mathcal{E}}}}_3 (u(t_0))$ have already been controlled.

First, we differentiate \eqref{noytingthat333} and write
\begin{align}\label{labke888dee99}
\big( I - \widetilde{A}_1 \big) u_{\xi\xi\eta}=G_3 \text{,}
\end{align}
where
\begin{align}
G_3 &= \widetilde{A}_2u_{\xi\eta\eta}+\widetilde{A}_3u_{\xi\xi\xi} + \partial_{\xi}\widetilde{A}_1u_{\xi\eta}+\partial_{\xi}\widetilde{A}_2u_{\eta\eta}+\partial_{\xi}\widetilde{A}_3u_{\xi\xi} \nonumber \\
&\qquad + \widetilde{A}_3f^{(3)}(\xi)+\partial_{\xi}\widetilde{A}_3f''(\xi) + \partial_{\xi}\widetilde{F} \text{.}
\end{align}
It then follows from Lemma \ref{le55mmmad2222} that
\begin{align}\label{byui998yi7}
|G_3| &\leq C|f'(\xi)||u_{\xi\eta\eta}|+\big(|f'(\xi)|+|f''(\xi)|+|f^{(3)}(\xi)|\big)\sum_{|b|\leq 1}|Z^{b}u_{\eta}| \nonumber \\
&\qquad + C\sum_{|b|\leq 2}|Z^{b}u_{\eta}|\sum_{|c|\leq 1}|Z^{c}u_{\xi}|+C\sum_{|b|\leq 1}|Z^{b}u_{\eta}|\sum_{|c|\leq 2}|Z^{c}u_{\xi}| \nonumber \\
&\leq C|u_{\xi\eta\eta}|+C\sum_{|b|\leq 1}|Z^{b}u_{\eta}|+C\sum_{|b|\leq 2}|Z^{b}u_{\eta}|\sum_{|c|\leq 1}|Z^{c}u_{\xi}| \nonumber \\
&\qquad + C\sum_{|b|\leq 1}|Z^{b}u_{\eta}|\sum_{|c|\leq 2}|Z^{c}u_{\xi}| \text{.}
\end{align}

Multiplying both sides of \eqref{labke888dee99} by $u^{T}_{\xi\xi\eta}$, we have
\begin{align}\label{key8990}
u^{T}_{\xi\xi\eta}\big(I-\widetilde{A}_1\big)u_{\xi\xi\eta}=u^{T}_{\xi\xi\eta}G_3 \text{.}
\end{align}
By \eqref{hyper10}, we have the lower bound
\begin{align}\label{fgy78855}
u^{T}_{\xi\xi\eta}\big(I-\widetilde{A}_1\big)u_{\xi\xi\eta} \geq \frac{\lambda}{2}|u_{\xi\xi\eta}|^2 \text{.}
\end{align}
Thus, by \eqref{key8990} and \eqref{fgy78855},
\begin{align}
\label{keygggg89hh90} |\langle \eta\rangle^{1+\delta} u_{\xi\xi\eta}| &\leq C\langle \eta\rangle^{1+\delta} |G_3| \text{,} \\
\label{similarl22} |\langle \xi\rangle^{-\frac{1+\delta}{2}}\langle \eta\rangle^{1+\delta} u_{\xi\xi\eta}| &\leq C\langle \xi\rangle^{-\frac{1+\delta}{2}}\langle \eta\rangle^{1+\delta} |G_3| \text{.}
\end{align}

By \eqref{byui998yi7}, \eqref{keygggg89hh90}, and Lemma \ref{xu899quasilinear }, we have
\begin{align}\label{keygggddg89ddhh90}
\|\langle \eta\rangle^{1+\delta} u_{\xi\xi\eta}\|_{L^2_{x}(\Sigma_{t_0})} &\leq C\|\langle \eta\rangle^{1+\delta} G_3\|_{L^2_{x}(\Sigma_{t_0})} \nonumber \\
&\leq C\|\langle \eta\rangle^{1+\delta}u_{\xi\eta\eta}\|_{L^2_{x}(\Sigma_{t_0})}+C\sum_{|b|\leq 1}\|\langle \eta\rangle^{1+\delta}Z^{b}u_{\eta}\|_{L^2_{x}(\Sigma_{t_0})} \nonumber \\
&\qquad + C\sum_{|b|\leq 2}\|\langle \eta\rangle^{1+\delta}Z^{b}u_{\eta}\|_{L^2_{x}(\Sigma_{t_0})}\sum_{|c|\leq 1}\|Z^{c}u_{\xi}\|_{L^{\infty}_{x}(\Sigma_{t_0})} \nonumber \\
&\qquad + C\sum_{|b|\leq 1}\|\langle \eta\rangle^{1+\delta}Z^{b}u_{\eta}\|_{L^{\infty}_{x}(\Sigma_{t_0})}\sum_{|c|\leq 2}\|Z^{c}u_{\xi}\|_{L^{2}_{x}(\Sigma_{t_0})} \nonumber \\
&\leq C\|\langle \eta\rangle^{1+\delta}u_{\xi\eta\eta}\|_{L^2_{x}(\Sigma_{t_0})} + C \big[ E^{1/2}_1(u(t_0))+E^{1/2}_2(u(t_0)) \big] \nonumber \\
&\qquad + C E (u(t_0)) \text{.}
\end{align}
Similarly, \eqref{byui998yi7}, \eqref{similarl22}, and Lemma \ref{xu899quasilinear } imply
\begin{align}\label{keygggfgghh90}
&\|\langle \xi\rangle^{-\frac{1+\delta}{2}}\langle \eta\rangle^{1+\delta} u_{\xi\xi\eta}\|_{L^2_{x}(\Sigma_{t_0})} \nonumber \\
&\quad \leq C\|\langle \xi\rangle^{-\frac{1+\delta}{2}}\langle \eta\rangle^{1+\delta} G_3\|_{L^2_{t,x}(S_{t_0})} \nonumber \\
&\quad \leq C \|\langle \xi\rangle^{-\frac{1+\delta}{2}}\langle \eta\rangle^{1+\delta}u_{\xi\eta\eta}\|_{L^2_{t,x}(S_{t_0})}+C\sum_{|b|\leq 1}\|\langle \xi\rangle^{-\frac{1+\delta}{2}}\langle \eta\rangle^{1+\delta}Z^{b}u_{\eta}\|_{L^2_{t,x}(S_{t_0})} \nonumber \\
&\quad \qquad + C\sum_{|b|\leq 2}\|\langle \xi\rangle^{-\frac{1+\delta}{2}}\langle \eta\rangle^{1+\delta}Z^{b}u_{\eta}\|_{L^2_{t,x}(S_{t_0})}\sum_{|c|\leq 1}\|Z^{c}u_{\xi}\|_{L^{\infty}_{t,x}(S_{t_0})} \nonumber \\
&\quad \qquad + C\sum_{|b|\leq 1}\|\langle \xi\rangle^{-\frac{1+\delta}{2}}\langle \eta\rangle^{1+\delta}Z^{b}u_{\eta}\|_{L^2_{t}L^{\infty}_{x}(S_{t_0})}\sum_{|c|\leq 2}\|Z^{c}u_{\xi}\|_{L^{\infty}_{t}L^{2}_{x}(S_{t_0})} \nonumber \\
&\quad \leq C\|\langle \xi\rangle^{-\frac{1+\delta}{2}}\langle \eta\rangle^{1+\delta}u_{\xi\eta\eta}\|_{L^2_{t,x}(S_{t_0})} + C \big[ \mathcal{E}^{1/2}_1(u(t_0))+\mathcal {E}^{1/2}_2(u(t_0)) \big] \nonumber \\
&\quad \qquad + C \sup_{0 \leq t \leq t_0} E^{1/2}(u(t)) \cdot \mathcal{E}^{1/2}(u(t_0)) \text{.}
\end{align}

Next, observe that \eqref{noytingthat333} implies
\begin{align}\label{tulta888}
\big(I-\widetilde{A}_1\big)u_{\eta\eta\xi}=\widetilde{G}_3 \text{,}
\end{align}
where
\begin{align}
\widetilde{G}_3 &= \widetilde{A}_2u_{\eta\eta\eta}+\widetilde{A}_3u_{\xi\eta\xi}+
\partial_{\eta}\widetilde{A}_1u_{\eta\xi}+\partial_{\eta}\widetilde{A}_2u_{\eta\eta}+\partial_{\eta}\widetilde{A}_3u_{\xi\xi} +\partial_{\eta}\widetilde{A}_3f''(\xi) +\partial_{\eta}\widetilde{F} \text{.}
\end{align}
From Lemma \ref{le55mmmad2222} and \eqref{fgty788899}, we have
\begin{align}\label{byui9f98yi7ff8999}
|\widetilde{G}_3| &\leq C|f'(\xi)||u_{\eta\eta\eta}| + C \big(|f'(\xi)|+|f''(\xi)|\big)\sum_{|b|\leq 1}|Z^{b}u_{\eta}| \nonumber \\
&\qquad + C\sum_{|b|\leq 2}|Z^{b}u_{\eta}|\sum_{|c|\leq 1}|Z^{c}u_{\xi}|+C\sum_{|b|\leq 1}|Z^{b}u_{\eta}|\sum_{|c|\leq 2}|Z^{c}u_{\xi}| \nonumber \\
&\leq C\langle \xi\rangle^{-\frac{3}{2}(1+\delta)}|u_{\eta\eta\eta}|+C\langle \xi\rangle^{-\frac{3}{2}(1+\delta)}\sum_{|b|\leq 1}|Z^{b}u_{\eta}| \nonumber \\
&\qquad + C\sum_{|b|\leq 2}|Z^{b}u_{\eta}|\sum_{|c|\leq 1}|Z^{c}u_{\xi}|+C\sum_{|b|\leq 1}|Z^{b}u_{\eta}|\sum_{|c|\leq 2}|Z^{c}u_{\xi}| \text{.}
\end{align}
Similarly to \eqref{keygggg89hh90} and \eqref{similarl22}, we obtain from \eqref{hyper10} and \eqref{tulta888} that
\begin{align}
\label{keygggg89hh98880} |\langle \xi\rangle^{1+\delta}u_{\eta\eta\xi}| &\leq C\langle\xi\rangle^{1+\delta} |\widetilde{G}_3| \text{,} \\
\label{simil888arl22} |\langle \eta\rangle^{-\frac{1+\delta}{2}}\langle \xi\rangle^{1+\delta} u_{\eta\eta\xi}| &\leq C\langle\eta\rangle^{-\frac{1+\delta}{2}}\langle \xi\rangle^{1+\delta} |\widetilde{G}_3| \text{.}
\end{align}

By \eqref{keygggg89hh98880}, \eqref{byui9f98yi7ff8999}, and Lemma \ref{xu899quasilinear }, we bound
\begin{align}\label{xuii90878}
\|\langle \xi\rangle^{1+\delta}u_{\eta\eta\xi}\|_{L^2_{x}(\Sigma_{t_0})} &\leq C\|\langle \xi\rangle^{1+\delta} \widetilde{G}_3\|_{L^2_{x}(\Sigma_{t_0})} \nonumber \\
&\leq C\|u_{\eta\eta\eta}\|_{L^2_{x}(\Sigma_{t_0})}+C\sum_{|b|\leq 1}\|Z^{b}u_{\eta}\|_{L^2_{x}(\Sigma_{t_0})} \nonumber \\
&\qquad + C\sum_{|b|\leq 2}\|Z^{b}u_{\eta}\|_{L^{2}_{x}(\Sigma_{t_0})}\sum_{|c|\leq 1}\|\langle \xi\rangle^{1+\delta}Z^{c}u_{\xi}\|_{L^{\infty}_{x}(\Sigma_{t_0})} \nonumber \\
&\qquad +C\sum_{|b|\leq 1}\|Z^{b}u_{\eta}\|_{L^{\infty}_{x}(\Sigma_{t_0})}\sum_{|c|\leq 2}\|\langle \xi\rangle^{1+\delta}Z^{c}u_{\xi}\|_{L^2_{x}(\Sigma_{t_0})} \nonumber \\
&\leq C\|\langle \eta\rangle^{1+\delta}u_{\eta\eta\eta}\|_{L^2_{x}(\Sigma_{t_0})} + C \big[ E^{1/2}_1(u(t_0))+E^{1/2}_2(u(t_0)) \big] \nonumber \\
&\qquad + CE(u(t_0)) \text{.}
\end{align}
Likewise, \eqref{simil888arl22}, \eqref{byui9f98yi7ff8999}, and Lemma \ref{xu899quasilinear } imply
\begin{align}\label{xuyi8965tu}
&\|\langle \eta\rangle^{-\frac{1+\delta}{2}}\langle \xi\rangle^{1+\delta}u_{\eta\eta\xi}\|_{L^2_{t,x}(S_{t_0})} \nonumber \\
&\quad \leq C\|\langle \eta\rangle^{-\frac{1+\delta}{2}}\langle \xi\rangle^{1+\delta} \widetilde{G}_3\|_{L^2_{t,x}(S_{t_0})} \nonumber \\
&\quad \leq C\|\langle \xi\rangle^{-\frac{1+\delta}{2}}u_{\eta\eta\eta}\|_{L^2_{t,x}(S_{t_0})}+C\sum_{|b|\leq 1}\|\langle \xi\rangle^{-\frac{1+\delta}{2}}Z^{b}u_{\eta}\|_{L^2_{t,x}(S_{t_0})} \nonumber \\
&\quad \qquad + C\sum_{|b|\leq 2}\|Z^{b}u_{\eta}\|_{L^{\infty}_{t}L^{2}_{x}(S_{t_0})}\sum_{|c|\leq 1}\|\langle \eta\rangle^{-\frac{1+\delta}{2}}\langle \xi\rangle^{1+\delta}Z^{c}u_{\xi}\|_{L^2_{t}L^{\infty}_{x}(S_{t_0})} \nonumber \\
&\quad \qquad + C\sum_{|b|\leq 1}\|Z^{b}u_{\eta}\|_{L^{\infty}_{t,x}(S_{t_0})}\sum_{|c|\leq 2}\|\langle \eta\rangle^{-\frac{1+\delta}{2}}\langle \xi\rangle^{1+\delta}Z^{c}u_{\xi}\|_{L^2_{t,x}(S_{t_0})} \nonumber \\
&\quad \leq C\|\langle \xi\rangle^{-\frac{1+\delta}{2}}\langle \eta\rangle^{1+\delta}u_{\eta\eta\eta}\|_{L^2_{t,x}(S_{t_0})} + C \big[ \mathcal{E}^{1/2}_1(u(t_0))+\mathcal {E}^{1/2}_2(u(t_0)) \big] \nonumber \\
&\quad \qquad + C \sup_{0\leq t\leq t_0} E^{1/2}(u(t)) \cdot \mathcal{E}^{1/2}(u(t_0)) \text{.}
\end{align}

Finally, putting together the bounds \eqref{keygggddg89ddhh90}, \eqref{xuii90878}, and \eqref{duix998709} yields
\begin{align}\label{xui9967eee444r88}
\widetilde{E}_3(u(t_0)) &\leq C\overline{{E}}_3(u(t_0)) + C \big[ E_1(u(t_0))+E_2(u(t_0)) \big] + CE^2(u(t_0)) \nonumber \\
&\leq C\overline{{E}}_3(u(t_0))+C\varepsilon^2+CA^3\varepsilon^3 \text{.}
\end{align}
Similarly, by \eqref{keygggfgghh90}, \eqref{xuyi8965tu}, and \eqref{duix998709},
\begin{align}\label{xui9967eee44erfrrrrrr4r88}
\widetilde{\mathcal {E}}_3(u(t_0)) &\leq C\overline{{\mathcal {E}}}_3(u(t_0))+C\big(\mathcal {E}_1(u(t_0))+\mathcal {E}_2(u(t_0)) \big)+C\sup_{0\leq t\leq t_0}E(u(t))\mathcal {E}(u(t_0)) \nonumber \\
&\leq C\overline{{\mathcal {E}}}_3(u(t_0))+C\varepsilon^2+CA^3\varepsilon^3 \text{.}
\end{align}

\subsubsection{Estimates of $\widehat{{E}}_3$ and $\widehat{\mathcal{E}}_3$}

Finally, we estimate $\widehat{{E}}_3(u(t_0))$ and $\widehat{\mathcal{E}}_3 (u(t_0))$.
Again, to control the linear part, we make use of bounds we have already derived for the other top-order energies.

For any multi-index $a = (a_1,a_2)$, with $|a|=2$ and $a_1\neq 0$, the Leibniz rule gives
\begin{align}\label{quasiwavegao7jjjjjjj888}
Z^{a}u_{\xi\eta} &= \widetilde{A}_1Z^{a}u_{\xi\eta} + \widetilde{A}_2Z^{a}u_{\eta\eta}+\widetilde{A}_3Z^{a}u_{\xi\xi}+\widetilde{H}_a \text{,}
\end{align}
where
\begin{align}
\widetilde{H}_a &= \sum_{\substack{b+c=a\\c\neq a}}\lambda_{bc}\big(Z^{b}\widetilde{A}_1Z^{c}u_{\xi\eta}+Z^{b}\widetilde{A}_2Z^{c}u_{\eta\eta}+Z^{b}\widetilde{A}_3Z^{c}u_{\xi\xi}\big) \nonumber \\
&\qquad + \sum_{{b+c=a}}\lambda_{bc}Z^{b}\widetilde{A}_3Z^{c}f''(\xi) +Z^{a}\widetilde{F} \text{,}
\end{align}
where $\lambda_{bc}$ are integer constants.
Moreover, by Lemmas \ref{le55mmmad2222} and \ref{xhjk999k99},
\begin{align}\label{xuyao9jllzongyaoj566}
|\widetilde{H}_a| &\leq C\big(|f'(\xi)|+|f''(\xi)|+|f^{(3)}(\xi)|+|f^{(4)}(\xi)|\big)(\sum_{|b|=2}|Z^{b}u_{\eta}|+\sum_{|b|\leq 1}|Z^{b}u_{\eta}|) \nonumber \\
&\qquad + C\sum_{|b|\leq 1}|Z^{b}u_{\eta}|\sum_{|c|\leq 2}|Z^{c}u_{\xi}|+C\sum_{|b|\leq 2}|Z^{b}u_{\eta}|\sum_{|c|\leq 1}|Z^{c}u_{\xi}| \text{.}
\end{align}

Multiplying \eqref{quasiwavegao7jjjjjjj888} by $2\psi(\eta)\phi(\xi)Z^{a}u^{T}_{\xi}$ and noting the symmetry of $\widetilde{A}_1$, $\widetilde{A}_2$, and $\widetilde{A}_3$ yields
\begin{align}\label{lemkejjjy17888}
&\big(\psi(\eta)\phi(\xi)|Z^{a}u_{\xi}|^2\big)_{\eta}-\psi'(\eta)\phi(\xi)|Z^{a}u_{\xi}|^2 \nonumber \\
&\quad = \big(\psi(\eta)\phi(\xi)Z^{a}u_{\xi}^{{T}}\widetilde{A}_1Z^{a}u_{\xi}\big)_{\eta}-\psi'(\eta)\phi(\xi)Z^{a}u_{\xi}^{{T}}\widetilde{A}_1Z^{a}u_{\xi}
-\psi(\eta)\phi(\xi)Z^{a}u_{\xi}^{{T}}\partial_{\eta}\widetilde{A}_1Z^{a}u_{\xi} \nonumber \\
&\quad \qquad + \big(2\psi(\eta)\phi(\xi)Z^{a}u_{\xi}^{{T}}\widetilde{A}_2Z^{a}u_{\eta}\big)_{\eta}-2\psi'(\eta)\phi(\xi)Z^{a}u_{\xi}^{{T}}\widetilde{A}_2Z^{a}u_{\eta} \nonumber \\
&\quad \qquad - 2\psi(\eta)\phi(\xi)Z^{a}u_{\xi}^{{T}}\partial_{\eta}\widetilde{A}_2Z^{a}u_{\eta} - \big(\psi(\eta)\phi(\xi)Z^{a}u_{\eta}^{{T}}\widetilde{A}_2Z^{a}u_{\eta}\big)_{\xi} \nonumber \\
&\quad \qquad + \psi(\eta)\phi'(\xi)Z^{a}u_{\eta}^{{T}}\widetilde{A}_2Z^{a}u_{\eta} + \psi(\eta)\phi(\xi)Z^{a}u_{\eta}^{{T}}\partial_{\xi}\widetilde{A}_2Z^{a}u_{\eta} \nonumber \\
&\quad \qquad + \big(\psi(\eta)\phi(\xi)Z^{a}u_{\xi}^{{T}}\widetilde{A}_3Z^{a}u_{\xi}\big)_{\xi} - \psi(\eta)\phi'(\xi)Z^{a}u_{\xi}^{{T}}\widetilde{A}_3Z^{a}u_{\xi} \nonumber \\
&\quad \qquad - \psi(\eta)\phi(\xi)Z^{a}u_{\xi}^{{T}}\partial_{\xi}\widetilde{A}_3Z^{a}u_{\xi} + 2\psi(\eta)\phi(\xi)Z^{a}u_{\xi}^{{T}}\widetilde{H}_a \text{.}
\end{align}
Integrating both sides of \eqref{lemkejjjy17888} over $S_{t_0}$ results in the identity
\begin{align}\label{itfolojjjjj888wjk99}
\int_{\Sigma_{t_0}}e_3(t_0,x)dx+\int_{S_{t_0}}p_3(t,x) &=\int_{\Sigma_{0}}e_3(0,x)dx+\int_{\Sigma_{t_0}}\widetilde{e}_3(t_0,x)dx-\int_{\Sigma_{0}}\widetilde{e}_3(0,x)dx \nonumber \\
&\qquad + \int_{S_{t_0}}q_3(t,x)+2\int_{S_{t_0}}\psi(\eta)\phi(\xi)Z^{a}u_{\xi}^{{T}}\widetilde{H}_a \text{,}
\end{align}
where
\begin{align}
e_3 &= \psi(\eta)\phi(\xi)Z^{a}u_{\xi}^{{T}}(I-\widetilde{A}_1)Z^{a}u_{\xi} \text{,} \\
p_3 &= -\psi'(\eta)\phi(\xi)Z^{a}u_{\xi}^{{T}}(I-\widetilde{A}_1)Z^{a}u_{\xi} \text{,} \\
\widetilde{e}_3&=2\psi(\eta)\phi(\xi)Z^{a}u_{\xi}^{{T}}\widetilde{A}_2Z^{a}u_{\eta}-\psi(\eta)\phi(\xi)Z^{a}u_{\eta}^{{T}}\widetilde{A}_2Z^{a}u_{\eta} \nonumber \\
&\qquad + \psi(\eta)\phi(\xi)Z^{a}u_{\xi}^{{T}}\widetilde{A}_3Z^{a}u_{\xi} \text{,} \\
q_3 &= -\psi(\eta)\phi(\xi)Z^{a}u_{\xi}^{{T}}\partial_{\eta}\widetilde{A}_1Z^{a}u_{\xi}-2\psi'(\eta)\phi(\xi)Z^{a}u_{\xi}^{{T}}\widetilde{A}_2Z^{a}u_{\eta} \nonumber \\
&\qquad - 2\psi(\eta)\phi(\xi)Z^{a}u_{\xi}^{{T}}\partial_{\eta}\widetilde{A}_2Z^{a}u_{\eta}+\psi(\eta)\phi'(\xi)Z^{a}u_{\eta}^{{T}}\widetilde{A}_2Z^{a}u_{\eta} \nonumber \\
&\qquad + \psi(\eta)\phi(\xi)Z^{a}u_{\eta}^{{T}}\partial_{\xi}\widetilde{A}_2Z^{a}u_{\eta} - \psi(\eta)\phi'(\xi)Z^{a}u_{\xi}^{{T}}\widetilde{A}_3Z^{a}u_{\xi} \nonumber \\
&\qquad - \psi(\eta)\phi(\xi)Z^{a}u_{\xi}^{{T}}\partial_{\xi}\widetilde{A}_3Z^{a}u_{\xi} \text{.}
\end{align}

In view of \eqref{hyper10}, \eqref{weight1}, and \eqref{weight3}, we have that
\begin{align}\label{cofffmm2}
e_3(t,x)\geq \frac{\lambda}{2c}|\langle \xi\rangle^{1+\delta} Z^au_{\xi}|^2 \text{,} \qquad p_3(t,x)\geq \frac{\lambda}{2c}|\langle \eta\rangle^{-\frac{1+\delta}{2}} \langle \xi\rangle^{1+\delta} Z^au_{\xi}|^2 \text{.}
\end{align}
Moreover, by Lemma \ref{le55mmmad2222} and \eqref{fgty788899},
\begin{align}\label{eeejccjjjertjuuu55e}
|\widetilde{e}_3(t,x)| &\leq C|f'(\xi)|\langle \xi\rangle^{2+2\delta}|Z^au_{\xi}||Z^au_{\eta}|+C|f'(\xi)|\langle \xi\rangle^{2+2\delta}|Z^au_{\eta}|^2 \nonumber \\
&\qquad + C\langle \xi\rangle^{2+2\delta}|u_{\xi}||Z^au_{\xi}||Z^au_{\eta}|+C\langle \xi\rangle^{2+2\delta}|u_{\xi}||Z^au_{\eta}|^2 + C\langle \xi\rangle^{2+2\delta}|u_{\eta}||Z^au_{\xi}|^2 \nonumber \\
&\leq C|\langle \xi\rangle^{1+\delta}Z^au_{\xi}||Z^au_{\eta}|+C|Z^au_{\eta}|^2 \nonumber \\
&\qquad + C|\langle \xi\rangle^{1+\delta}u_{\xi}|\sum_{|b|=2}|\langle \xi\rangle^{1+\delta}Z^bu_{\xi}||Z^au_{\eta}| + C|u_{\eta}||\langle \xi\rangle^{1+\delta}Z^au_{\xi}|^2 \text{.}
\end{align}
Here, we also noted that since $a_1\neq 0$,
\begin{align}\label{fact2}
|Z^{a}u_{\eta}| = |\partial_{\xi}^{a_1}\partial_{\eta}^{a_2}u_{\eta}|\leq \sum_{|b|=2}|Z^{b}u_{\xi}| \text{.}
\end{align}

By Lemma \ref{le55mmmad2222}, \eqref{fgty788899}, and \eqref{fact2}, we estimate
\begin{align}\label{q23333}
|q_3(t,x)| &\leq C|f'(\xi)|\langle \xi\rangle^{2+2\delta}|Z^au_{\xi}||Z^au_{\eta}|+C(|f'(\xi)|+|f''(\xi)|)\langle \xi\rangle^{2+2\delta}|Z^au_{\eta}|^2 \nonumber \\
&\qquad + C|\langle \xi\rangle^{1+\delta}Z^{a}u_{\xi}|^2(|u_{\eta}|+|u_{\xi\eta}|+|u_{\eta\eta}|) \nonumber \\
&\qquad + C\langle \xi\rangle^{2+2\delta}|Z^{a}u_{\xi}||Z^{a}u_{\eta}|(|u_{\xi}|+|u_{\xi\eta}|)+C\langle \xi\rangle^{2+2\delta}|Z^{a}u_{\eta}|^2(|u_{\xi}|+|u_{\xi\xi}|+|u_{\xi\eta}|) \nonumber \\
&\leq C|\langle\eta\rangle^{-\frac{1+\delta}{2}}\langle \xi\rangle^{1+\delta}Z^au_{\xi}||\langle\xi\rangle^{-\frac{1+\delta}{2}}\langle \eta\rangle^{1+\delta}Z^au_{\eta}| + C|\langle\xi\rangle^{-\frac{1+\delta}{2}}\langle \eta\rangle^{1+\delta}Z^au_{\eta}|^2 \nonumber \\
&\qquad + C|\langle\eta\rangle^{-\frac{1+\delta}{2}}\langle \xi\rangle^{1+\delta}Z^au_{\xi}|^2\sum_{|b|\leq 1}|\langle\eta\rangle^{1+\delta}Z^bu_{\eta}| \nonumber \\
&\qquad + C|\langle \eta\rangle^{1+\delta}Z^au_{\eta}|\sum_{|b|= 2}|\langle\eta\rangle^{-\frac{1+\delta}{2}}\langle \xi\rangle^{1+\delta}Z^bu_{\xi}|\sum_{|c|\leq 1 }|\langle\eta\rangle^{-\frac{1+\delta}{2}}\langle \xi\rangle^{1+\delta}Z^cu_{\xi}| \text{.}
\end{align}
In addition, by \eqref{xuyao9jllzongyaoj566} and \eqref{fgty788899},
\begin{align}\label{xui8dddd9999}
&|\psi(\eta)\phi(\xi)Z^{a}u_{\xi}^{{T}}\widetilde{H}_a| \nonumber \\
&\quad \leq C|\langle\eta\rangle^{-\frac{1+\delta}{2}}\langle \xi\rangle^{1+\delta}Z^{a}u_{\xi}|\big(\sum_{|b|=2}|\langle\xi\rangle^{-\frac{1+\delta}{2}}\langle \eta\rangle^{1+\delta}Z^{b}u_{\eta}|+\sum_{|b|\leq 1}|\langle\xi\rangle^{-\frac{1+\delta}{2}}\langle \eta\rangle^{1+\delta}Z^{b}u_{\eta}|\big) \nonumber \\
&\quad \qquad + C|\langle\eta\rangle^{-\frac{1+\delta}{2}}\langle \xi\rangle^{1+\delta}Z^{a}u_{\xi}|\sum_{|b|\leq 1}|\langle \eta\rangle^{1+\delta}Z^{b}u_{\eta}|\sum_{|c|\leq 2}|\langle\eta\rangle^{-\frac{1+\delta}{2}}\langle \xi\rangle^{1+\delta}Z^{c}u_{\xi}| \nonumber \\
&\quad \qquad + C|\langle\eta\rangle^{-\frac{1+\delta}{2}}\langle \xi\rangle^{1+\delta}Z^{a}u_{\xi}|\sum_{|b|\leq 2}|\langle \eta\rangle^{1+\delta}Z^{b}u_{\eta}|\sum_{|c|\leq 1}|\langle\eta\rangle^{-\frac{1+\delta}{2}}\langle \xi\rangle^{1+\delta}Z^{c}u_{\xi}| \text{.}
 \end{align}

Now, from \eqref{itfolojjjjj888wjk99} and \eqref{cofffmm2}, we have that
\begin{align}\label{itfolojjj8899jj888wjk99}
\widehat{{E}}_3(u(t_0))+\widehat{{{\mathcal{E}}}}_3(u(t_0)) &= \sum_{\substack{|a|=2\\a_1\neq 0}}\|\langle \xi\rangle^{1+\delta} Z^au_{\xi}\|^2_{L^2_{x}(\Sigma_{t_0})} + \sum_{\substack{|a|=2\\a_1\neq 0}}\|\langle \eta\rangle^{-\frac{1+\delta}{2}}\langle \xi\rangle^{1+\delta} Z^au_{\xi}\|^2_{L^2_{t,x}(S_{t_0})} \nonumber \\
&\leq C\varepsilon^2+C\sum_{\substack{|a|=2\\a_1\neq 0}}\|\widetilde{e}_3\|_{L^1_{x}(\Sigma_{t_0})}+C\sum_{\substack{|a|=2\\a_1\neq 0}}\|q_3\|_{L^1_{t,x}(S_{t_0})} \nonumber \\
&\qquad + C\sum_{\substack{|a|=2\\a_1\neq 0}}\|\psi(\eta)\phi(\xi)Z^{a}u_{\xi}^{{T}}\widetilde{H}_a\|_{L^1_{t,x}(S_{t_0})} \text{.}
\end{align}
Moreover, by \eqref{eeejccjjjertjuuu55e},
\begin{align}\label{eeejccfgg55e}
\|\widetilde{e}_3\|_{L^1_{x}(\Sigma_{t_0})} &\leq \frac{1}{100}\|\langle \xi\rangle^{1+\delta}Z^au_{\xi}\|^2_{L^2_{x}(\Sigma_{t_0})}+C\|Z^au_{\eta}\|_{L^2_{x}(\Sigma_{t_0})}^2 \nonumber \\
&\qquad + C \|\langle \xi\rangle^{1+\delta}u_{\xi}\|_{L^{\infty}_{x}(\Sigma_{t_0})}\sum_{|b|=2}\|\langle \xi\rangle^{1+\delta}Z^bu_{\xi}\|_{L^2_{x}(\Sigma_{t_0})}\|Z^au_{\eta}\|_{L^2_{x}(\Sigma_{t_0})} \nonumber \\
&\qquad + C \|u_{\eta}\|_{L^{\infty}_{x}(\Sigma_{t_0})}\|\langle \xi\rangle^{1+\delta}Z^au_{\xi}\|_{L^2_{x}(\Sigma_{t_0})}^2 \nonumber \\
&\leq \frac{1}{100}\widehat{{E}}_3(u(t_0))+C\overline{E}_3(u(t_0))+C\widetilde{E}_3(u(t_0))+CE^{3/2}(u(t_0)) \text{,}
\end{align}
while by \eqref{q23333}, we have
\begin{align}\label{q23899333}
\|q_3\|_{L^1_{t,x}(S_{t_0})} &\leq \frac{1}{100}\|\langle\eta\rangle^{-\frac{1+\delta}{2}}\langle \xi\rangle^{1+\delta}Z^au_{\xi}\|^2_{L^2_{t,x}(S_{t_0})}
+C\|\langle\xi\rangle^{-\frac{1+\delta}{2}}\langle \eta\rangle^{1+\delta}Z^au_{\eta}\|^2_{L^2_{t,x}(S_{t_0})} \nonumber \\
&\qquad + C\|\langle\eta\rangle^{-\frac{1+\delta}{2}}\langle \xi\rangle^{1+\delta}Z^au_{\xi}\|_{L^2_{t,x}(S_{t_0})}^2\sum_{|b|\leq 1}\|\langle\eta\rangle^{1+\delta}Z^bu_{\eta}\|_{L^{\infty}_{t,x}(S_{t_0})} \nonumber \\
&\qquad + C\|\langle \eta\rangle^{1+\delta}Z^au_{\eta}\|_{L^{\infty}_{t}L^2_{x}(S_{t_0})}\sum_{|b|= 2}\|\langle\eta\rangle^{-\frac{1+\delta}{2}}\langle \xi\rangle^{1+\delta}Z^bu_{\xi}\|_{L^2_{t,x}(S_{t_0})} \nonumber \\
&\qquad \qquad \cdot \sum_{|c|\leq 1 }\|\langle\eta\rangle^{-\frac{1+\delta}{2}}\langle \xi\rangle^{1+\delta}Z^cu_{\xi}\|_{L^{2}_{t}L^{\infty}_{x}(S_{t_0})} \nonumber \\
&\leq \frac{1}{100}\widehat{\mathcal {E}}_3(u(t_0))+C\overline{\mathcal {E}}_3(u(t_0))+C\widetilde{\mathcal {E}}_3(u(t_0)) \nonumber \\
&\qquad + C \sup_{0 \leq t \leq t_0} E^{1/2} (u(t)) \cdot \mathcal{E}(u(t_0)) \text{.}
\end{align}
Furthermore, \eqref{xui8dddd9999} implies
\begin{align}\label{xui8ddddddd33d9999}
&\|\psi(\eta)\phi(\xi)Z^{a}u_{\xi}^{{T}}\widetilde{H}_a\|_{L^1_{t,x}(S_{t_0})} \nonumber \\
&\quad \leq \frac{1}{100}\|\langle\eta\rangle^{-\frac{1+\delta}{2}}\langle \xi\rangle^{1+\delta}Z^{a}u_{\xi}\|^2_{L^2_{t,x}(S_{t_0})} \nonumber \\
&\quad \qquad + C\sum_{|b|=2}\|\langle\xi\rangle^{-\frac{1+\delta}{2}}\langle \eta\rangle^{1+\delta}Z^{b}u_{\eta}\|^2_{L^2_{t,x}(S_{t_0})}+C\sum_{|b|\leq 1}\|\langle\xi\rangle^{-\frac{1+\delta}{2}}\langle \eta\rangle^{1+\delta}Z^{b}u_{\eta}\|^2_{L^2_{t,x}(S_{t_0})} \nonumber \\
&\quad \qquad + C\|\langle\eta\rangle^{-\frac{1+\delta}{2}}\langle \xi\rangle^{1+\delta}Z^{a}u_{\xi}\|_{L^2_{t,x}(S_{t_0})}\sum_{|b|\leq 1}\|\langle \eta\rangle^{1+\delta}Z^{b}u_{\eta}\|_{L^{\infty}_{t,x}(S_{t_0})} \nonumber \\
&\quad \qquad \qquad \cdot \sum_{|c|\leq 2}\|\langle\eta\rangle^{-\frac{1+\delta}{2}}\langle \xi\rangle^{1+\delta}Z^{c}u_{\xi}\|_{L^2_{t,x}(S_{t_0})} \nonumber \\
&\quad \qquad + C \|\langle\eta\rangle^{-\frac{1+\delta}{2}}\langle \xi\rangle^{1+\delta}Z^{a}u_{\xi}\|_{L^2_{t,x}(S_{t_0})}\sum_{|b|\leq 2}\|\langle \eta\rangle^{1+\delta}Z^{b}u_{\eta}\|_{L^{\infty}_{t}L^{2}_{x}(S_{t_0})} \nonumber \\
&\quad \qquad \qquad \cdot \sum_{|c|\leq 1}\|\langle\eta\rangle^{-\frac{1+\delta}{2}}\langle \xi\rangle^{1+\delta}Z^{c}u_{\xi}\|_{L^2_{t}L^{\infty}_{x}(S_{t_0})} \nonumber \\
&\quad \leq \frac{1}{100}\widehat{\mathcal {E}}_3(u(t_0))+C\overline{\mathcal {E}}_3(u(t_0))+C\widetilde{\mathcal {E}}_3(u(t_0)) + C \big[ {\mathcal {E}}_1(u(t_0))+{\mathcal {E}}_2(u(t_0)) \big] \nonumber \\
&\quad \qquad + C\sup_{0\leq t\leq t_0}E^{1/2}(u(t)) \cdot \mathcal{E}(u(t_0)) \text{.}
\end{align}

Thus, combining \eqref{itfolojjj8899jj888wjk99}--\eqref{xui8ddddddd33d9999}, \eqref{duix998709}, \eqref{xui9967eee444r88}, \eqref{xui9967eee44erfrrrrrr4r88}, and \eqref{Pthcom234888999}, we conclude that
\begin{align}\label{edddeejccfgg9955e}
&\widehat{{E}}_3(u(t_0))+\widehat{{{\mathcal{E}}}}_3(u(t_0)) \nonumber \\
&\quad \leq C\varepsilon^2+C\overline{E}_3(u(t_0))+C\overline{\mathcal {E}}_3(u(t_0))+C\widetilde{E}_3(u(t_0))+C\widetilde{\mathcal {E}}_3(u(t_0)) \nonumber \\
&\quad \qquad + C \big[ {\mathcal {E}}_1(u(t_0))+{\mathcal{E}}_2(u(t_0)) \big] + C E^{3/2}(u(t_0)) + C \sup_{0 \leq t \leq t_0} E^{1/2}(u(t)) \cdot \mathcal{E}(u(t_0)) \nonumber \\
&\quad \leq C\varepsilon^2+CA^3\varepsilon^3 \text{.}
\end{align}

\subsection{Conclusion of the Proof}

Combining the estimates \eqref{duix998709}, \eqref{Pthcom234888999}, \eqref{xui9967eee444r88}, \eqref{xui9967eee44erfrrrrrr4r88}, and \eqref{edddeejccfgg9955e} yields
\begin{align}
\sup_{0 \leq t \leq T} \big[ E(u(t))+\mathcal {E}(u(t)) \big] &\leq C_1 \varepsilon^2 + C_2 A^3\varepsilon^3 \text{,}
\end{align}
for some constants $C_1$ and $C_2$.
Taking $A^2 = 2 \max \{C_0, C_1\}$ and $\varepsilon_0$ sufficiently small such that $2 C_2 A \varepsilon_0 \leq 1$ results in the bound \eqref{rgty6788889} and completes the bootstrap argument.

To complete the proof of Theorem \ref{thn88786hhh9900}, it remains only to show that our solution $u$ can be extended beyond our given time $T$.
Then, the above bootstrap estimates (which are independent of $T$) can be iterated indefinitely to prove global existence and smallness.

This extension can be established, for instance, using the classical local well-posedness theory from \cite{hkm:ql_lwp}.
We briefly give an informal sketch of this argument below.

\subsubsection{Sketch of Local Well-Posedness}

First, we write our system \eqref{vsystem99999} in terms of Cartesian derivatives,
\begin{equation}
\label{vsystem100000} a_{00} ( u_\xi, u_\eta ) \, u_{tt} = a_{11} ( u_\xi, u_\eta ) \, u_{xx} + ( a_{01} + a_{10} ) \, u_{tx} + \text{lower-order terms.}
\end{equation}
Then, from \cite[Section 3.1]{hkm:ql_lwp}, the key assumptions needed for local well-posedness are
\begin{equation}
\label{hkm_ass} y^T \, a_{00} ( u_\xi, u_\eta ) \, y \geq \mathcal{C} | y |^2 \text{,} \qquad y^T \, a_{11} ( u_\xi, u_\eta ) \, y \geq \mathcal{C} | y |^2 \text{,}
\end{equation}
for all $y \in \mathbb{R}^n$ and for some constant $\mathcal{C} > 0$.
Since $u_\xi$ and $u_\eta$ are guaranteed to be as small as we need in our setting, then by continuity, it suffices to show, for some $\mathcal{C} > 0$,
\begin{equation}
\label{hkm_real_ass} y^T \, a_{00} ( 0, 0 ) \, y \geq \mathcal{C} | y |^2 \text{,} \qquad y^T \, a_{11} ( 0, 0 ) \, y \geq \mathcal{C} | y |^2 \text{.}
\end{equation}

Expressing in terms of $\xi$ and $\eta$, the two conditions in \eqref{hkm_real_ass} expand to
\begin{align}
\label{hkm_real_ass_1} y^T [ 1 - A_1 ( f' ( \xi ), 0 ) - A_2 ( f' ( \xi ), 0 ) ] y &\geq \mathcal{C} | y |^2 \text{,} \\
\label{hkm_real_ass_2} y^T [ 1 - A_1 ( f' ( \xi ), 0 ) + A_2 ( f' ( \xi ), 0 ) ] y &\geq \mathcal{C} | y |^2 \text{,}
\end{align}
the first of which is our assumption \eqref{hyper1}.
If \eqref{hkm_real_ass_2} also holds, then the well-posedness result of \cite[Theorem III]{hkm:ql_lwp} holds here, and our proof of Theorem \ref{thn88786hhh9900} is complete.

On the other hand, if \eqref{hkm_real_ass_2} fails to hold, then we define the change of variables
\begin{equation}
\bar{t} = ( 1 + c ) t \text{,} \qquad \bar{x} = x + c t \text{,}
\end{equation}
with $0 < c < 1$, from which we obtain a transformed system
\begin{equation}
\label{vsystem100001} a_{00} ( u_\xi, u_\eta ) \, u_{ \bar{t} \bar{t} } = a_{11} ( u_\xi, u_\eta ) \, u_{ \bar{x} \bar{x} } + ( a_{01} + a_{10} ) \, u_{ \bar{t} \bar{x} } + \text{lower-order terms,}
\end{equation}
with a different set of coefficients $a_{00}$, $a_{01}$, $a_{10}$, $a_{11}$.
A direct computation shows that the positivity conditions \eqref{hkm_real_ass}, in the new $( \bar{t}, \bar{x} )$-coordinates, are now given by
\begin{align}
\label{hkm_real_ass_3} ( 1 + c )^2 y^T [ 1 - A_1 ( f' ( \xi ), 0 ) - A_2 ( f' ( \xi ), 0 ) ] y &\geq \mathcal{C} | y |^2 \text{,} \\
\label{hkm_real_ass_4} ( 1 - c ) y^T [ ( 1 + c ) ( 1 - A_1 ( f' ( \xi ), 0 ) ) + ( 1 - c ) A_2 ( f' ( \xi ), 0 ) ] y &\geq \mathcal{C} | y |^2 \text{.}
\end{align}
In particular, by \eqref{hyper1} and \eqref{hyper3}, both \eqref{hkm_real_ass_3} and \eqref{hkm_real_ass_4} hold as long as $c$ is sufficiently close to $1$.
Thus, applying \cite[Theorem III]{hkm:ql_lwp} completes the proof of Theorem \ref{thn88786hhh9900} in general.

\begin{rem}
In particular, notice that $\bar{t}$ and $t$ have the same level sets, and $\partial_{ \bar{\xi} }$ points in the same direction as $\partial_\xi$, so that one still captures the traveling wave.
The rough idea here is to ensure that the transformed $\partial_{ \bar{t} }$ is now pointing in a ``timelike" direction.
\end{rem}

\section*{Acknowledgments}

This work was carried out while the first author was visiting the School of Mathematical Sciences at Queen Mary University of London from January 2020.
He would like to thank the support of the China Scholarship Council, as well as the school's hospitality.
The first author is supported by the National Natural Science Foundation of China No.~11801068 and the Fundamental Research Funds for the Central Universities.
The second author is supported, for a portion of this work, by EPSRC grant EP/R011982/1.


\end{document}